\DeclareSymbolFontAlphabet{\mathbb}{AMSb}
\DeclareSymbolFontAlphabet{\mathbbl}{bbold}
\newtheorem{theorem}{Theorem}[section]
\newtheorem{prop}[theorem]{Proposition}
\newtheorem{proposition}[theorem]{Proposition}
\newtheorem{corollary}[theorem]{Corollary}
\newtheorem{conjecture}[theorem]{Conjecture}
\newtheorem{construction}[theorem]{Construction}
\theoremstyle{definition} 
\newtheorem{defn}[theorem]{Definition}
\newtheorem{definition}[theorem]{Definition}
\newtheorem{example}[theorem]{Example}
\newtheorem{remark}[theorem]{Remark}
\newcommand{\R}{\bb{R}}
\newcommand{\qu}{/\kern-.7ex/}
\newcommand{\lqu}{\backslash \kern-.7ex \backslash}
\newcommand{\on}{\operatorname} 
\newcommand{\Aut}{\on{Aut}}
\newcommand{\ev}{\on{ev}}
\newcommand{\HH}{\mathfrak H}
\newcommand{\bM}{\overline{\mathcal M}}
\title[LG potential, intrinsic mirror symmetry \& relative mirror map]{The proper Landau--Ginzburg potential, intrinsic mirror symmetry and the relative mirror map}
\author{Fenglong You}
\address{Department of Mathematics \\ ETH Z\"urich, \\Rämistrasse 101, \\8092 Zürich, \\Switzerland}
\email{fenglong.you@math.ethz.ch}
\thanks{}
\keywords{}
\begin{document}
\date{\today}

\begin{abstract} 
Given a smooth log Calabi--Yau pair $(X,D)$, we use the intrinsic mirror symmetry construction to define the mirror proper Landau--Ginzburg potential and show that it is a generating function of two-point relative Gromov--Witten invariants of $(X,D)$. We compute certain relative invariants with several negative contact orders, and then apply the relative mirror theorem of \cite{FTY} to compute two-point relative invariants. When $D$ is nef, we compute the proper Landau--Ginzburg potential and show that it is the inverse of the relative mirror map. Specializing to the case of a toric variety $X$, this implies the conjecture of \cite{GRZ} that the proper Landau--Ginzburg potential is the open mirror map. When $X$ is a Fano variety, the proper potential is related to the anti-derivative of the regularized quantum period. 
\end{abstract}

\maketitle 

\tableofcontents

\section{Introduction}

Mirror symmetry is originally stated as a duality between Calabi--Yau manifolds. Mirror symmetry predicts that the symplectic geometry (or the complex geometry) of a Calabi--Yau manifold is equivalent to the complex geometry (or the symplectic geometry, respectively) of the mirror Calabi--Yau manifold. The mirror duality has been generalized to Fano varieties, more generally, to log Calabi--Yau pairs. The mirror of a smooth log Calabi--Yau pair $(X,D)$ is a Landau--Ginzburg model, which is a variety $X^\vee$ with a proper map, called the superpotential, $W:X^\vee \rightarrow \mathbb C$. One further expects that the generic fiber of the superpotential $W$ is mirror to a smooth anticanonical divisor $D$ of $X$. We call the mirror duality between a smooth log Calabi--Yau pair $(X,D)$ and a proper Landau--Ginzburg model \emph{relative mirror symmetry}. To construct the mirror of the smooth log Calabi--Yau pair $(X,D)$, one needs to construct the variety $X^\vee$ and the proper Landau--Ginzburg potential $W$.

The variety $X^\vee$ is considered as the mirror of the complement $X\setminus D$. A general construction of the variety $X^\vee$ is through intrinsic mirror symmetry \cite{GS19} in the  Gross--Siebert program. One considers a maximally unipotent degeneration $g: Y\rightarrow S$, where $S$ is an affine curve, of the pair $(X,D)$. The mirror is constructed as the projective spectrum of the degree zero part of the relative quantum cohomology $QH^0_{\log}(Y,D^\prime)$ of $(Y,D^\prime)$, where $D^\prime$ is a certain divisor that contains $g^{-1}(0)$.

It remains to compute the proper Landau--Ginzburg potential $W$. Following the Gross--Siebert program, the Landau--Ginzburg potentials are given by the theta functions. The theta functions are usually difficult to compute. 

Recently, \cite{GRZ} computed the proper Landau--Ginzburg potentials for toric del Pezzo surfaces. They considered a toric degeneration of the smooth pair $(X,D)$, then applied the tropical view of the Landau--Ginzburg models \cite{CPS}. The theta function in \cite{GRZ} was defined tropically. By proving a tropical correspondence theorem in \cite{Graefnitz2022}, they showed that the theta function can be written as a generating function of two-point relative invariants. The idea of computing two-point relative invariants in \cite{GRZ} was to relate these two-point relative invariants with one-point relative invariants of a blow-up $\tilde {X}$. Then use the local-relative correspondence of \cite{vGGR} to relate these invariants to local invariants of the Calabi--Yau threefold $K_{\tilde {X}}$. By the open-closed duality of \cite{LLW11}, these local invariants are open invariants of the local Calabi--Yau threefold $K_X$ which form the open mirror map. Therefore, \cite{GRZ} showed that, for toric del Pezzo surfaces, the proper Landau--Ginzburg potentials are the open mirror maps.

In this paper, we study the Landau--Ginzburg model from the intrinsic mirror symmetry construction in \cite{GS19}. The goal of this paper is to generalize the result of \cite{GRZ} to all dimensions via a direct computation of two-point relative Gromov--Witten invariants. The computation is based on the relative mirror theorem of \cite{FTY}, where we only need to assume that $D$ is nef. The variety $X$ is not necessarily toric or Fano. 

\subsection{Intrinsic mirror symmetry and theta functions}
Besides the tropical view of the Landau--Ginzburg model \cite{CPS}, the proper Landau--Ginzburg model can also be constructed through intrinsic mirror symmetry. We learnt about the following construction from Mark Gross. 

Given a smooth log Calabi--Yau pair $(X,D)$. We recall the maximally unipotent degeneration $g:Y\rightarrow S$ and the pair $(Y,D^\prime)$ from the construction of $X^\vee$. Following the mirror construction of \cite{GS19}*{Construction 1.19}, the theta functions in $QH^0_{\log}(Y,D^\prime)$ form a graded ring. The degree zero part of the ring agrees with $QH^0_{\log}(X,D)$. The base of the Landau--Ginzburg mirror of $(X,D)$ is $\on{Spec}QH^0_{\log}(X,D)=\mathbb A^1$ and the superpotential is $W=\vartheta_{1}$, the unique primitive theta function of $QH^0_{\log}(X,D)$. 

We claim that the theta functions of $QH^0_{\log}(X,D)$ are generating functions of two-point relative Gromov--Witten invariants as follows.
\begin{definition}[=Definition \ref{def-theta-func}]\label{intro-def-theta}
For $p\geq 1$, the theta function is
\begin{align}\label{intro-theta-func-def}
\vartheta_p=x^{-p}+\sum_{n=1}^{\infty}nN_{n,p}t^{n+p}x^n,
\end{align}
where 
\[
N_{n,p}=\sum_{\beta} \langle [\on{pt}]_n,[1]_p\rangle_{0,2,\beta}^{(X,D)}
\]
is the sum of two-point relative Gromov--Witten invariants with the first marking having contact order $n$ along with a point constraint and the second marking having contact order $p$.
\end{definition}

By \cite{GS19}, theta functions should satisfy the following product rule
\begin{align}\label{intro-theta-func-multi}
\vartheta_{p_1}\star \vartheta_{p_2}=\sum_{r\geq 0, \beta}N_{p_1,p_2,-r}^{\beta} \vartheta_r,
\end{align}
where the structure constants $N_{p_1,p_2,-r}^{\beta}$ are punctured invariants with two positive contacts and one negative contact. 

In Proposition \ref{prop-struc-const}, we show that the structure constants can be written in terms of two-point relative invariants. For example, when $r<p_1,p_2$, we have
\[
N_{p_1,p_2,-r}^{\beta}=(p_1-r)\langle [\on{pt}]_{p_1-r}, [1]_{p_2}\rangle_{0,2,\beta}^{(X,D)}+ (p_2-r)\langle [\on{pt}]_{p_2-r}, [1]_{p_1}\rangle_{0,2,\beta}^{(X,D)}.
\]

In other words, we reduce relative invariants with two positive contacts and one negative contact to relative invariants with two positive contacts. We refer to Proposition \ref{prop-struc-const} for the formula of $N_{p_1,p_2,-r}^{\beta}$ in other cases. 

Then we show that the theta functions in Definition \ref{intro-theta-func-def} indeed satisfy the product rule (\ref{intro-theta-func-multi}) with the correct structure constants $N_{p_1,p_2,-r}^{\beta}$. In particular, in Proposition \ref{prop-wdvv}, we prove an identity of two-point relative invariants generalizing \cite{GRZ}*{Lemma 5.3} and show that it follows from the WDVV equation.


\begin{remark}
During the preparation of our paper, we learnt that Yu Wang\cite{Wang} also obtained the same formula as in Proposition \ref{prop-struc-const} but using the punctured invariants of \cite{ACGS}. Some formulas for two-point relative invariants are also obtained in \cite{Wang} via a different method. 
\end{remark}

\subsection{Relative mirror maps and computing genus zero relative invariants.}

The relative mirror theorem of \cite{FTY} states that, under the assumption that $D$ is nef and $-K_X-D$ is nef, a genus zero generating function of relative Gromov--Witten invariants (the $J$-function) can be identified with the relative periods (the $I$-function) via a change of variables called the relative mirror map. This provides a powerful tool to compute genus zero relative Gromov--Witten invariants.

Our computation of these two-point relative invariants is straightforward but complicated. It is straightforward to see that these invariants can be extracted from the relative $J$-function after taking derivatives. On the other hand, although such computation for (one-point) absolute invariants is well-known, the computation of two-point relative invariants is much more complicated due to the following reasons. 

First of all, we need to compute two-point relative invariants instead of one-point invariants. For one-point relative invariants, one can also use the local-relative correspondence of \cite{vGGR} (see also \cite{TY20b}) to reduce the computation to local invariants when the divisor $D$ is nef. To compute these two-point invariants one need to consider the so-called extended relative $I$-function, instead of the much easier non-extended relative $I$-function. 

Secondly, the relative mirror map has never been studied systematically. There have been some explicit computations of relative invariants when the relative mirror maps are trivial, see, for example, \cite{TY20b}. When the relative mirror maps are not trivial, more complicated invariants will appear. One of the important consequences of this paper is to provide a systematic analysis of these invariants and set up the foundation of future applications of the relative mirror theorem.

We would like to point out a related computation in \cite{You20}, where we computed one-point relative invariants of some partial compactifications of toric Calabi--Yau orbifolds. The computation is much easier in \cite{You20} because of the two reasons that we just mentioned. First of all, one can apply the local-relative correspondence to compute these invariants, although we did not use it in \cite{You20}. Secondly, although the mirror map is not trivial, the mirror map is essentially coming from the absolute Gromov--Witten theory of the partial compactifications. The relative theory in \cite{You20} does not contribute to the non-trivial mirror map. Therefore we were able to avoid all these complexities. We would also like to point out that the computation in \cite{You20} is restricted to the toric case. In this paper, we work beyond the toric setting.

In order to apply the relative mirror theorem of \cite{FTY}, we need to study the relative mirror map carefully.
Let $X$ be a smooth projective variety and $D$ be a smooth nef divisor, we recall that the $J$-function for the pair $(X,D)$ is defined as
\[
J_{(X,D)}(\tau,z)=z+\tau+\sum_{\substack{(\beta,l)\neq (0,0), (0,1)\\ \beta\in \on{NE(X)}}}\sum_{\alpha}\frac{q^{\beta}}{l!}\left\langle \frac{\phi_\alpha}{z-\bar{\psi}},\tau,\ldots, \tau\right\rangle_{0,1+l, \beta}^{(X,D)}\phi^{\alpha},
\]
and
the (non-extended) $I$-function of the smooth pair $(X,D)$ is 
\[
I_{(X,D)}(y,z)=\sum_{\beta\in \on{NE(X)}}J_{X,\beta}(\tau_{0,2},z)y^\beta\left(\prod_{0<a\leq D\cdot \beta-1}(D+az)\right)[1]_{-D\cdot \beta}.
\]
We refer to Definition \ref{def-relative-J-function} and Definition \ref{def-relative-I-function} for the precise meaning of the notation. The extended $I$-function $I(y,x_1,z)$ takes a more complicated form than the non-extended $I$-fnction $I(y,z)$. We refer to Definition \ref{def-relative-I-function-extended} for the precise definition of the extended $I$-function. We further assume that $-K_X-D$ is nef. The extended relative mirror map is given by the $z^0$-coefficient of the extended $I$-function:
\begin{align*}
\tau(y,x_1)=\sum_{i=1}^r p_i\log y_i+x_1[1]_{1}+\sum_{\substack{\beta\in \on{NE}(X)\\ D\cdot \beta \geq 2}}\langle [\on{pt}]\psi^{D\cdot \beta-2}\rangle_{0,1,\beta}^Xy^\beta (D\cdot \beta-1)![1]_{-D\cdot \beta}.
\end{align*}
The (non-extended) relative mirror map is given by $\tau(y,0)$, denoted by $\tau(y)$. The relative mirror theorem of \cite{FTY} states that
\[
J(\tau(y,x_1),z)=I(y,x_1,z).
\]

Therefore from the expression of $\tau(y,x_1)$, we can see that relative invariants with several negative contact orders will appear when the relative mirror map is not trivial. We obtain the following identity which shows that the negative contact insertion $[1]_{-k}$ is similar to the insertion of a divisor class $[D]_0$. 
\begin{proposition}[=Proposition \ref{prop-several-neg-1}]
\begin{align*}
\langle [1]_{-k_1},\cdots, [1]_{-k_l}, [\gamma]_{k_{l+1}} \bar{\psi}^a\rangle_{0,l+1,\beta}^{(X,D)}=\langle [D]_0,\cdots, [D]_0, [\gamma]_{D\cdot \beta} \bar{\psi}^a\rangle_{0,l+1,\beta}^{(X,D)},
\end{align*}
where $\gamma \in H^*(D)$, $k_i$'s are positive integers, and 
\[
D\cdot \beta=k_{l+1}-\sum_{i=1}^l k_i\geq 0.
\]
\end{proposition}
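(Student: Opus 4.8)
The plan is to strip the negative insertions off one at a time. The reduction I would isolate is the following single--step identity: for an integer $k>0$, an arbitrary tuple $B$ of interior and relative insertions, and contact order $c\geq k$,
\[
\langle [1]_{-k},\, B,\, [\gamma]_{c}\bar{\psi}^{a}\rangle_{0,\beta}^{(X,D)}=\langle [D]_0,\, B,\, [\gamma]_{c-k}\bar{\psi}^{a}\rangle_{0,\beta}^{(X,D)} .
\]
Granting this, the Proposition follows by applying it $l$ times: peeling off $k_1,\dots,k_j$ in succession converts $[1]_{-k_1},\dots,[1]_{-k_j}$ into $j$ copies of $[D]_0$ and changes the contact order at the last marking to $k_{l+1}-k_1-\dots-k_j=D\cdot\beta+k_{j+1}+\dots+k_l\geq 0$, so every intermediate expression is again of the required shape and after $l$ steps the last contact order is exactly $D\cdot\beta$.

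To prove the single--step identity I would unwind the definition of relative invariants with negative contact orders from \cite{FWY}. There the left--hand side is written as a sum over decorated bipartite graphs: one distinguished vertex carries an honest relative invariant of $(X,D)$ containing the insertions of $B$, the last marking (with $\gamma\in H^{*}(D)$, the descendant $\bar{\psi}^{a}$, and some positive contact order), and one extra positive relative marking per edge; the remaining vertices carry rubber Gromov--Witten invariants of the projective completion $\mathbb{P}(N_{D/X}\oplus\mathcal{O}_{D})$ relative to its zero and infinity sections; these rubber vertices are glued to the distinguished vertex along nodes mapping into $D$, and the marking $[1]_{-k}$ sits on a rubber vertex as a contact of order $k$ with the zero section. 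The crux is to evaluate the rubber vertex carrying $[1]_{-k}$: because that marking carries the fundamental class, the rubber integral collapses --- one uses the rubber calculus (the divisor/string relation for the section, together with the explicit evaluation of rubber integrals whose insertions are pulled back from $D$) to contract the corresponding component. The net effect is to delete the rubber vertex, transport its $k$ units of contact order onto the last marking, and deposit a class $[D]_0$ on the distinguished vertex; resumming over the graphs then yields precisely $\langle [D]_0,\, B,\, [\gamma]_{c-k}\bar{\psi}^{a}\rangle_{0,\beta}^{(X,D)}$.

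I expect the main obstacle to be the combinatorics of the rubber graph sum: one must verify that the only surviving contribution is the one in which the rubber part has been completely contracted, i.e.\ that every configuration with a nontrivial rubber component --- several rubber vertices, multiple covers of a fibre, or constant maps into $D$ --- has a vanishing rubber integral (the fundamental--class insertion forcing a dimension mismatch after projection to $D$) or cancels against another term, all while keeping careful track of the $\psi$--class corrections at the gluing edges and of the transferred contact orders. A secondary subtlety is the boundary case $D\cdot\beta=0$ together with several negative insertions, where the extra degenerate configurations must be shown to contribute nothing. An alternative and possibly shorter route would be to pass to the root stack $X_{D,r}$, where $[1]_{-k}$ becomes a twisted sector marking and $[D]_0$ an untwisted divisor class, and to deduce the identity from the orbifold divisor equation for $c_1$ of the $r$--th root of $\mathcal{O}_{X}(D)$ after letting $r\to\infty$ --- trading the rubber bookkeeping for the combinatorics of the polynomial dependence on $r$.
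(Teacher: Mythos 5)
Your reduction to the single--step identity
\[
\langle [1]_{-k},\, B,\, [\gamma]_{c}\bar{\psi}^{a}\rangle_{0,\beta}^{(X,D)}=\langle [D]_0,\, B,\, [\gamma]_{c-k}\bar{\psi}^{a}\rangle_{0,\beta}^{(X,D)}
\]
is stated for an \emph{arbitrary} tuple $B$ of interior and relative insertions, and in that generality it is false; this is exactly the ``subtle but vital'' point the paper flags. If $B$ contains another positive--contact marking, the rubber vertex carrying $[1]_{-k}$ may or may not also carry that marking, and the graph sum does not collapse to a single $[D]_0$ insertion: depositing $[D]_0$ and invoking the divisor equation multiplies by the \emph{total} tangency $D\cdot\beta$, whereas the correct factor is the tangency of whichever positive marking ends up at $D_\infty$ of the rubber. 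Concretely, iterating your identity on $\langle [1]_1,[1]_{-k_1},\dots,[1]_{-k_l},[\gamma]_{k_{l+1}}\rangle$ would give $(D\cdot\beta)^l\langle[1]_1,[\gamma]_{D\cdot\beta-1}\rangle$, contradicting Proposition \ref{prop-several-neg-2}, which gives $(D\cdot\beta-1)^l$; similarly Proposition \ref{prop-theta-2} produces a sum of two terms where your identity would produce one. So the lemma you isolate cannot serve as a black box; it must be restricted to the case where $B$ contains no positive--contact insertions, which is precisely why the paper proves the whole identity at once rather than peeling off one marking at a time.

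Even in the restricted setting your proof of the single step largely restates the conclusion (``the rubber integral collapses \dots\ the net effect is to delete the rubber vertex \dots\ and deposit a class $[D]_0$'') without the mechanism, and the picture is not quite what happens: the surviving configuration is not a contracted rubber but a single rubber vertex of fiber class zero carrying \emph{all} the negative markings together with the unique positive marking. The paper's argument runs: nefness of $D$ forces every rubber vertex to carry a positive--contact marking, hence there is exactly one rubber vertex; pushing forward to $\bM_{0,m}(D,\pi_*\beta_1)$ by \cite{JPPZ18} and applying the string equation $l$ times (legitimate because $\deg \mathfrak c_\Gamma\le l-1$) forces $\pi_*\beta_1=0$ and a single edge of multiplicity $D\cdot\beta$; the resulting rubber integral is identified with the $(\mathbb P^1,0\cup\infty)$ invariant computed in \cite{KW} to be $(D\cdot\beta)^{l-1}$; the product $(D\cdot\beta)\cdot(D\cdot\beta)^{l-1}$ then matches the RHS by the divisor equation. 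None of steps one, three or four appears in your sketch. Finally, the descendant case $a>0$ is not addressed: the paper needs a separate induction on $a$ via the topological recursion relation (together with the $l=1$ base case, which \emph{is} your single--step identity with $B=\emptyset$, proved there using triviality of the genus--zero double ramification cycle), and your proposed root--stack shortcut founders on the fact that $[1]_{-k}$ becomes a twisted--sector insertion, for which there is no divisor equation.
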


With these preparations, we are able to express invariants in $J(\tau(y,0),z)$ in terms of relative invariants without negative contact orders and the relative mirror map can be written as the following change of variables
\begin{align}\label{intro-relative-mirror-map}
\sum_{i=1}^r p_i\log q_i=\sum_{i=1}^r p_i\log y_i+g(y)D.
\end{align}

Since we need to compute invariants with an insertion $[1]_1$, we also need to compute the following relative invariants with insertions of $[1]_1$. 

\begin{itemize}
    \item Relative invariants with two positive contact orders and several negative contact orders of the following form:
    \[
    \langle [1]_1, [1]_{-k_1},\cdots, [1]_{-k_l}, [\gamma]_{k_{l+1}} \rangle_{0,l+2,\beta}^{(X,D)},
    \]
     \[
    D\cdot \beta-1=k_{l+1}-\sum_{i=1}^l k_i\geq 0, \text{ and } k_i>0.
    \]
    In Proposition \ref{prop-several-neg-2}, we show that these invariants equal to two point invariants
    \[
    (D\cdot \beta-1)^l\langle [1]_1, [\gamma]_{D\cdot \beta-1} \rangle_{0,2,\beta}^{(X,D)}.
    \] 
    For invariants with several insertions of $[1]_1$, we refer to Proposition \ref{prop-several-neg-3}.
    \item Degree zero relative invariants  with two positive contact orders and several negative contact orders of the following form:
    \[
    \langle [1]_1,[1]_{-k_1},\cdots, [1]_{-k_l},[\on{pt}]_{k_{l+1}}\rangle_{0,l+2,0}^{(X,D)}.
    \]
    By Proposition \ref{prop-degree-zero}, these invariants equal to $(-1)^{l-1}$.
\end{itemize}

\begin{remark}
We would like to point out that a key point of the computation of the proper potential is to observe the subtle (but vital) difference between the above mentioned formulas for invariants with two positive contacts and formulas for invariants with one positive contact. 
\end{remark}

\begin{remark}
The proof of Proposition \ref{prop-several-neg-1}, Proposition \ref{prop-several-neg-2}, Proposition \ref{prop-several-neg-3}, and Proposition \ref{prop-degree-zero} make essential use of the (both orbifold and graph sum) definitions of relative Gromov--Witten invariants with negative contact orders in \cite{FWY}. Since these invariants reduce to relative invariants with two positive contact orders, we do not need to assume a general relation between the punctured invariants of \cite{ACGS} and \cite{FWY}. To match with the intrinsic mirror symmetry in the Gross--Siebert program, we only need to assume that punctured invariants of a smooth pair with one negative contact order defined in \cite{ACGS} coincide with the ones in \cite{FWY}. A more general comparison result is an upcoming work of \cite{BNR22}, so we do not attempt to give a proof of this special case.
\end{remark}

Relative mirror theorem was established in \cite{FTY}, however, it was not clear how to apply the relative mirror theorem to compute relative invariants when the mirror map is not trivial. Understanding these invariants with several negative contact orders are essential for the application of the relative mirror theorem to compute relative invariants when the mirror map is not trivial. Our approach provides a powerful tool to compute genus zero relative invariants. It is not limited to two-point invariants, it can be generalized to compute genus zero relative invariants with several relative markings with insertions $k_i\geq 1$.

\subsection{The proper Landau--Ginzburg potential}

The main result of the paper is to relate the proper Landau--Ginzburg potential with the relative mirror map. Note that the generating function $\vartheta_1$ in Definition \ref{intro-def-theta} is not simply a coefficient of the $J$-function, instead, it is a derivative of a coefficient of the $J$-function. By extracting a coefficient of the $J$-function and the $I$-function, applying the computation of relative invariants with several negative contact orders in Proposition \ref{prop-several-neg-2} and Proposition \ref{prop-degree-zero}, and taking derivatives, we have the following statement.

\begin{theorem}[=Theorem \ref{thm-main}]\label{intro-thm-main}
Let $X$ be a smooth projective variety with a smooth nef anticanonical divisor $D$. Let $W:=\vartheta_1$ be the mirror proper Landau--Ginzburg potential. Set $q^\beta=t^{D\cdot \beta}x^{D\cdot\beta}$. Then
\[
W=x^{-1}\exp\left(g(y(q))\right),
\]
where
\[
g(y)=\sum_{\substack{\beta\in \on{NE}(X)\\ D\cdot \beta \geq 2}}\langle [\on{pt}]\psi^{D\cdot \beta-2}\rangle_{0,1,\beta}^Xy^\beta (D\cdot \beta-1)!
\]
and $y=y(q)$ is the inverse of the relative mirror map (\ref{intro-relative-mirror-map}).
\end{theorem}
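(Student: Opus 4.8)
The plan is to combine the relative mirror theorem of \cite{FTY} with the explicit formulas for relative invariants with negative contact orders established in the earlier propositions, and to track everything through the definition $W=\vartheta_1$. Recall from Definition \ref{intro-def-theta} that $\vartheta_1 = x^{-1}+\sum_{n\ge 1} n N_{n,1} t^{n+1}x^n$, where $N_{n,1}=\sum_\beta \langle [\on{pt}]_n,[1]_1\rangle_{0,2,\beta}^{(X,D)}$. So the whole point is to compute the two-point invariants $\langle [\on{pt}]_n,[1]_1\rangle_{0,2,\beta}^{(X,D)}$ with $n=D\cdot\beta-1$, extract them from the $J$-function, and then resum.

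First I would isolate the relevant coefficient of the relative $J$-function $J_{(X,D)}(\tau(y,x_1),z)$. Since the second insertion is $[1]_1$, I want the component of the $J$-function along the dual class to $[1]_1$, namely $[1]_{-1}$, and I extract the $z^{-1}$ (or appropriate power) coefficient to pick up the $\bar\psi$-insertion that produces $\langle [\on{pt}]_n/(z-\bar\psi),\ldots\rangle$. Expanding $J(\tau(y,x_1),z)$ via the relative mirror theorem $J(\tau(y,x_1),z)=I(y,x_1,z)$, the extended $I$-function contributes the hypergeometric-type factors $\prod_{0<a\le D\cdot\beta-1}(D+az)$ together with the insertions $[1]_{-D\cdot\beta}$, and the extended variable $x_1$ feeds in the $[1]_1$-marking. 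The key algebraic input is that the relative mirror map is $\sum p_i\log q_i=\sum p_i\log y_i + g(y)D$, i.e. purely a shift in the $D$-direction by $g(y)$; combined with Proposition \ref{prop-several-neg-1}, which says a negative contact insertion $[1]_{-k}$ behaves like a divisor insertion $[D]_0$, the insertions coming from the non-trivial part of $\tau(y,x_1)$ reorganize, after the divisor/string-equation bookkeeping, into powers of $g(y)$.

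Next I would use Proposition \ref{prop-several-neg-2} and Proposition \ref{prop-degree-zero} to evaluate the invariants that appear. Proposition \ref{prop-several-neg-2} reduces the invariants $\langle [1]_1,[1]_{-k_1},\ldots,[1]_{-k_l},[\gamma]_{k_{l+1}}\rangle^{(X,D)}_{0,l+2,\beta}$ to $(D\cdot\beta-1)^l\langle [1]_1,[\gamma]_{D\cdot\beta-1}\rangle^{(X,D)}_{0,2,\beta}$, and Proposition \ref{prop-degree-zero} handles the degree-zero pieces (giving $(-1)^{l-1}$), which are exactly the terms needed to invert the mirror map. Summing the contributions of $l$ negative insertions weighted by $\frac{1}{l!}$ (from the $l$ copies of $\tau$ in the $J$-function) times $(D\cdot\beta-1)^l$ times the $g(y)$-monomials produces an exponential: $\sum_l \frac{1}{l!}(D\cdot\beta-1)^l(\cdots)^l$, and after matching with the prefactor $x^{-1}$ and the substitution $q^\beta=t^{D\cdot\beta}x^{D\cdot\beta}$ this assembles into $x^{-1}\exp(g(y(q)))$. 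Concretely, one shows $n N_{n,1}t^{n+1}x^n$ summed over $n$ with $n=D\cdot\beta-1$ equals the degree-$(\ge 1)$ part of $x^{-1}(\exp(g)-1)$, where $g$ is evaluated at the inverse mirror map so that the $y^\beta$ get converted to $q^\beta$.

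The main obstacle I expect is the combinatorial bookkeeping in the second step: carefully matching the insertions produced by expanding $J(\tau(y,x_1),z)=I(y,x_1,z)$—which involves the string and divisor equations to absorb the $\sum p_i\log y_i$ part of $\tau$, the reconstruction of $\bar\psi$-powers on the relative marking, and the precise identification of which $\langle[\on{pt}]\psi^{D\cdot\beta-2}\rangle^X_{0,1,\beta}$ absolute invariants show up—so that the sum over configurations of negative contact insertions genuinely exponentiates into $\exp(g)$ rather than some corrected series. A secondary subtlety is verifying that taking the $t$- (equivalently $x$-)derivative implicit in passing from the $J$-function coefficient to $\vartheta_1$ (recall $\vartheta_1$ is a \emph{derivative} of a $J$-function coefficient, as noted before the theorem) is compatible with the change of variables $y=y(q)$ and does not spoil the exponential form; this is where the factor $n$ in $nN_{n,1}$ and the shape of the relative mirror map conspire, and it is essentially the content that makes $g(y(q))$, rather than $g(y)$, appear in the final answer.
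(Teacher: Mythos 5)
Your proposal follows essentially the same route as the paper's proof of Theorem \ref{thm-main}: extract the coefficient of $x_1z^{-1}$ from $J(\tau(y,x_1),z)=I(y,x_1,z)$, use Propositions \ref{prop-several-neg-2} and \ref{prop-degree-zero} to collapse the mirror-map insertions into $\exp(-g(y))$ times the two-point series, and recover the factor $n$ in $nN_{n,1}$ by differentiating before matching with the $I$-side. One correction to your setup: the component to extract is not the one along the dual of $[1]_1$ (``$[1]_{-1}$'') but the sum over $n\ge 1$ of the components along $[1]_{-n}$, dual to the point insertions $[\on{pt}]_n$; the $[1]_1$-marking enters only through the extended variable $x_1$, as you note later. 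The two ``obstacles'' you flag are precisely where the paper does its work: the exponentiation comes out as $e^{(D\cdot\beta-1)g(y)}y^\beta=e^{-g(y)}q^\beta$ (whence the $\exp(-g)$ prefactor that the degree-zero terms $-\exp(-g(y))+1$ complete), and the derivative is implemented by applying $\Delta_D=\sum_{i=1}^r m_iy_i\frac{\partial}{\partial y_i}-1$ to both the $I$- and $J$-coefficients, whose compatibility with the change of variables is exactly the chain-rule computation of $\frac{\partial q_j}{\partial y_i}$ carried out there.
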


\begin{remark}
This is a natural expectation from the point of view of relative mirror symmetry. Recall that the proper Landau--Ginzburg model $(X^\vee,W)$ is mirror to the smooth log Calabi--Yau pair $(X,D)$. The proper Landau--Ginzburg potential $W$ should encode the instanton corrections. On the other hand, the relative mirror theorem relates relative Gromov--Witten invariants with relative periods (relative $I$-functions) via the relative mirror map. In order to have a mirror construction with a trivial mirror map, the instanton corrections should be the inverse relative mirror map. This provides an enumerative meaning of the relative mirror map.
\end{remark}

\begin{remark}
In \cite{CJL21} and \cite{CJL20}, Collins--Jacob--Lin constructed the SYZ fibration and the dual fibration with respect to the Ricci-flat metric for del Pezzo surfaces and rational elliptic surfaces. It would be interesting to see how to extract quantum corrections from their construction, then compare with the computation from the Gross--Siebert program. There are some related computation in \cite{Lin20}, \cite{LLL} and \cite{BECHL}.
\end{remark}

\begin{remark}
    Applications of mirror theorems (e.g. \cite{CCLT}, \cite{CLT}, \cite{CLLT} and \cite{You20}) are usually for toric targets. Theorem \ref{intro-thm-main} holds for general targets based on highly non-trivial computations. 
\end{remark}

In \cite{GRZ}, the authors conjectured that the proper Landau--Ginzburg potential is the open mirror map. We also have a natural explanation for it. The open mirror map of \cite{GRZ} is given by open Gromov--Witten invariants of the local Calabi--Yau $\mathcal O_X(-D)$. These open invariants encode the instanton corrections and are expected to be the inverse mirror map of the local Gromov--Witten theory of $\mathcal O_X(-D)$. We observe that the relative mirror map (\ref{intro-relative-mirror-map}) and the local mirror map coincide up to a sign. So we claim that the proper Landau--Ginzburg potential is also the open mirror map. When $X$ is a toric variety, we proved the conjecture of \cite{GRZ}.
\begin{theorem}[=Theorem \ref{thm-toric-open}]\label{intro-thm-toric-open}
Let $(X,D)$ be a smooth log Calabi--Yau pair, such that $X$ is toric and $D$ is nef. The proper Landau--Ginzburg potential of $(X,D)$ is the open mirror map of the local Calabi--Yau manifold $\mathcal O_X(-D)$.
\end{theorem}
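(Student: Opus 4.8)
The plan is to deduce this from Theorem \ref{intro-thm-main} together with the known identification, in the toric case, of the open mirror map of $\mathcal{O}_X(-D)$ with the inverse of the local mirror map. First I would recall that by Theorem \ref{intro-thm-main} the proper Landau--Ginzburg potential is
\[
W = x^{-1}\exp\bigl(g(y(q))\bigr),
\]
where $g(y)=\sum_{D\cdot\beta\geq 2}\langle[\on{pt}]\psi^{D\cdot\beta-2}\rangle_{0,1,\beta}^X\, y^\beta\,(D\cdot\beta-1)!$ and $y=y(q)$ is the inverse of the relative mirror map $\sum_i p_i\log q_i = \sum_i p_i\log y_i + g(y)D$. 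So the content of the theorem is the comparison $\exp(g(y(q))) = 1 + (\text{open mirror map of }\mathcal{O}_X(-D))$, or more precisely that $g(y(q))$, read off in the appropriate coordinates, is the open mirror map in the normalization used in \cite{GRZ}.

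Next I would make precise the statement that the relative mirror map and the local mirror map for $\mathcal{O}_X(-D)$ coincide up to sign, as indicated in the remark preceding the theorem. Concretely: the local Gromov--Witten theory of the total space $K_X$-type bundle $\mathcal{O}_X(-D)$ has an $I$-function whose $z^0$-term produces a local mirror map, and by the local-relative correspondence of \cite{vGGR} the relevant one-point local invariants of $\mathcal{O}_X(-D)$ are identified, up to the standard sign $(-1)^{D\cdot\beta-1}(D\cdot\beta-1)$ (or its reciprocal), with the relative invariants $\langle[\on{pt}]\psi^{D\cdot\beta-2}\rangle_{0,1,\beta}^X$ that appear in $g$. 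Tracking these signs carefully shows that the series $g$ is, up to the substitution $y_i\mapsto -y_i$ on the curve classes meeting $D$, exactly the generating series controlling the local mirror map; hence $y(q)$, the inverse relative mirror map, is the inverse local mirror map up to that same sign, and $\exp(g(y(q)))$ is the corresponding instanton-corrected coordinate. In the toric setting this is where one invokes the structure of the toric $I$-function: the extended data and the Picard-rank-$r$ presentation of the relative mirror map match the Batyrev-type mirror map of $\mathcal{O}_X(-D)$.

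Then I would bring in the open side. By the open-closed duality of \cite{LLW11} (the argument used in \cite{GRZ} for del Pezzo surfaces, which is already phrased for toric $X$), the open Gromov--Witten invariants of $\mathcal{O}_X(-D)$ counting discs with boundary on an Aganagic--Vafa Lagrangian are identified with the local closed invariants entering the local mirror map; consequently the open mirror map of $\mathcal{O}_X(-D)$ equals the inverse local mirror map. Composing this with the previous paragraph gives that the open mirror map coincides with $y(q)$ up to the sign substitution, and unwinding the normalizations yields $W = x^{-1}\exp(g(y(q)))$ is precisely the open mirror map in the form stated in \cite{GRZ}. I expect the main obstacle to be bookkeeping: reconciling the three different normalization conventions --- the relative $I$-function of \cite{FTY}, the local $I$-function of $\mathcal{O}_X(-D)$, and the open-invariant normalization of \cite{GRZ}/\cite{LLW11} --- including the precise powers of $t$ and $x$ coming from $q^\beta = t^{D\cdot\beta}x^{D\cdot\beta}$ and the signs from the local-relative correspondence, so that the identifications line up as honest equalities of power series in the correct variables rather than merely up to rescaling. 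Once the dictionary is fixed, the theorem follows formally from Theorem \ref{intro-thm-main} and the cited correspondences.
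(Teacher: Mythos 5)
Your proposal is correct and follows essentially the same route as the paper: invoke Theorem \ref{thm-main}, identify the relative mirror map with the local mirror map of $\mathcal O_X(-D)$ (up to the sign $y\mapsto -y$), and then use the known fact that for a toric Calabi--Yau the generating function of open Gromov--Witten invariants is the inverse of the local mirror map. The only cosmetic differences are that the paper obtains the mirror-map identification by directly comparing the toric relative and local $I$-functions (rather than routing through the local-relative correspondence of \cite{vGGR} on invariants, noting also that $\langle[\on{pt}]\psi^{D\cdot\beta-2}\rangle_{0,1,\beta}^{X}$ are absolute, not relative, invariants) and cites \cite{CCLT}, \cite{CLT} rather than \cite{LLW11} for the open-string input.
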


In general, Theorem \ref{intro-thm-toric-open} is true as long as the open-closed duality (e.g. \cite{CLT} and \cite{CCLT}) between open Gromov--Witten invariants $K_X$ and closed Gromov--Witten invariants $P(\mathcal O(-D)\oplus \mathcal O)$ is true. Therefore, we have the following.

\begin{corollary}
The open-closed duality implies the proper Landau--Ginzburg potential is the open mirror map.
\end{corollary}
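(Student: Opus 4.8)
The plan is to abstract the proof of Theorem~\ref{intro-thm-toric-open}, isolating the single place where toricity enters --- the invocation of open-closed duality --- and replacing it with the hypothesis. By Theorem~\ref{intro-thm-main}, which requires only that $D$ be a smooth nef anticanonical divisor, the proper Landau--Ginzburg potential is $W=x^{-1}\exp(g(y(q)))$, where
\[
g(y)=\sum_{\substack{\beta\in\on{NE}(X)\\ D\cdot\beta\geq 2}}\langle[\on{pt}]\psi^{D\cdot\beta-2}\rangle_{0,1,\beta}^X\, y^\beta(D\cdot\beta-1)!
\]
and $y=y(q)$ inverts the relative mirror map~\eqref{intro-relative-mirror-map}. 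Hence it suffices to identify this expression, term by term, with the open mirror map of the local Calabi--Yau $\mathcal{O}_X(-D)=K_X$.

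First I would recall the definition (following \cite{GRZ}) of the open mirror map of $\mathcal{O}_X(-D)$: it is the generating function of the genus-zero open (disk) Gromov--Witten invariants of $K_X$ relative to the relevant Lagrangian brane, packaged as the coordinate change dual to the closed local mirror map of $K_X$. The open-closed duality hypothesis identifies each such disk invariant with a one-pointed genus-zero descendant Gromov--Witten invariant of the projective completion $P:=\mathbb{P}(\mathcal{O}_X(-D)\oplus\mathcal{O}_X)$, the descendant insertion being supported at the section at infinity; in the toric case this is the content of \cite{CLT}, \cite{CCLT} and \cite{LLW11}, and here it is the only external input. Under this identification the open mirror map becomes a generating function of one-pointed descendant invariants of $P$.

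Next I would evaluate those invariants of $P$ together with the accompanying mirror map. By the closed mirror theorem for the $\mathbb{P}^1$-bundle $P$ --- equivalently the local mirror theorem for $K_X$, as exploited in \cite{You20} --- the relevant one-pointed descendant invariants of $P$ reduce, by a $\mathbb{C}^*$-localization and divisor computation on the bundle, to the absolute invariants $\langle[\on{pt}]\psi^{D\cdot\beta-2}\rangle_{0,1,\beta}^X$ of $X$, and the closed mirror map of $P$ (equivalently the local mirror map of $K_X$) coincides with the relative mirror map~\eqref{intro-relative-mirror-map} after the sign substitution $q^\beta\mapsto(-1)^{D\cdot\beta}q^\beta$ --- the observation recorded in the paragraph preceding Theorem~\ref{intro-thm-toric-open}. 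Feeding these two facts into the open mirror map yields exactly $x^{-1}\exp(g(y(q)))=W$.

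The main obstacle I anticipate is bookkeeping rather than a new idea: one must match insertions and signs carefully. The two delicate points are (i) checking that open-closed duality pairs the precise disk invariants that assemble the \emph{open mirror map} with the precise descendant invariants that assemble the \emph{closed mirror map} of $P$ --- an equality of generating functions with matched markings, not merely a numerical coincidence --- and (ii) tracking the sign twist between the local and relative mirror maps so that the exponential $\exp(g(y(q)))$, and not some neighbouring series, is reproduced. In the toric setting both are handled in \cite{CLT} and \cite{CCLT}; the corollary asserts that, once (i) is granted as a hypothesis, (ii) and all remaining identifications follow from Theorem~\ref{intro-thm-main} together with the local mirror theorem, with no further appeal to toricity.
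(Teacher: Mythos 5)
Your argument follows the paper's own route: Theorem \ref{intro-thm-main} exhibits $W$ as the exponential of $g$ composed with the inverse relative mirror map, the relative mirror map is identified with the local mirror map of $\mathcal{O}_X(-D)$ up to the sign change $y\mapsto -y$ (Section \ref{sec-rel-mirror-map}), and the open-closed duality hypothesis (Conjecture \ref{conj-open-closed}) converts the open mirror map into exactly that inverse local mirror map. The only embellishment is your proposed $\mathbb{C}^*$-localization evaluation of one-pointed descendant invariants of $\mathbb{P}(\mathcal{O}_X(-D)\oplus\mathcal{O}_X)$, which is dispensable: the paper matches the local and relative mirror maps directly at the level of $I$-functions, so no closed invariant of the projective bundle ever needs to be computed.
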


\begin{remark}
The reason that the relative mirror map is the same as the local mirror map can also be seen from the local-relative correspondence of \cite{vGGR} and \cite{TY20b}. It has already been observed in \cite{TY20b} that the local and (non-extended) relative $I$-functions can be identified. And this identification has been used to prove the local-relative correspondence for some invariants in \cite{TY20b}.  
\end{remark}

\begin{remark}
We consider Theorem \ref{intro-thm-main} provides a complete story from algebro-geometric point of view. It provides a connection between the Gross--Siebert mirror construction \cite{GS19} and the relative version of the enumerative mirror symmetry in \cite{FTY}. For the SYZ mirror symmetry, one may naturally expect that the proper potential is a generating function of genus zero open Gromov--Witten invariants of $X\setminus D$ \cite{Auroux07}. Open Gromov--Witten invariants of $\mathcal O_X(-D)$ appear in Theorem \ref{intro-thm-toric-open} maybe because there exists an identity between open Gromov--Witten invariants of $\mathcal O_X(-D)$ and the open Gromov--Witten invariants of $X\setminus D$, similar to the local-relative correspondence \cite{vGGR} for closed Gromov--Witten invariants. However, we do not know if such an identity will be true in general. We refer to \cite{Lin20} for some discussions about open Gromov--Witten invariants of $X\setminus D$.
\end{remark}

Theorem \ref{intro-thm-main} provides explicit formulas for the proper potentials whenever the relevant genus zero absolute Gromov--Witten invariants of $X$ are computable. These absolute invariants can be extracted from the $J$-function of the absolute Gromov--Witten theory of $X$. Therefore, we have explicit formulas of the proper Landau--Ginzburg potentials whenever a Givental style mirror theorem holds. Givental style mirror theorem has been proved for many cases beyond the toric setting (e.g. \cite{CFKS} for non-abelian quotients via the abelian/non-abelian correspondence). Therefore, we have explicit formulas for the proper Landau--Ginzburg potentials for large classes of examples. Note that there may be non-trivial mirror maps for absolute Gromov--Witten theory of $X$. If we replace the absolute invariants in $g(y)$ by the corresponding coefficients of the absolute $I$-function, we also need to plug-in the inverse of the absolute mirror map. This can be seen in the case of toric varieties in Section \ref{sec-toric-semi-Fano}.

For Fano varieties, the invariants in $g(y)$ are usually easier. We observed that $g(y)$ is closely related to the regularized quantum periods in the Fano search program \cite{CCGGK}.

\begin{theorem}\label{intro-thm-quantum-period}
The function $g(y)$ coincides with the anti-derivative of the regularized quantum period.
\end{theorem}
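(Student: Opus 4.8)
The plan is to compare two explicit power series attached to a Fano variety $X$: the function
\[
g(y)=\sum_{\substack{\beta\in \on{NE}(X)\\ D\cdot \beta \geq 2}}\langle [\on{pt}]\psi^{D\cdot \beta-2}\rangle_{0,1,\beta}^X\, y^\beta\, (D\cdot \beta-1)!
\]
appearing in Theorem \ref{intro-thm-main}, and the regularized quantum period $\widehat{G}_X(t)$ of \cite{CCGGK}, which is the generating function
\[
\widehat{G}_X(t)=1+\sum_{d\geq 2} d!\, c_d\, t^d, \qquad c_d=\sum_{\substack{\beta\in \on{NE}(X)\\ -K_X\cdot \beta=d}}\langle [\on{pt}]\psi^{-K_X\cdot\beta-2}\rangle_{0,1,\beta}^X.
\]
First I would recall the precise definition of the quantum period $G_X$ and its regularization $\widehat{G}_X$ from \cite{CCGGK}: $G_X(t)=\sum_\beta \langle [\on{pt}]\psi^{-K_X\cdot\beta-2}\rangle_{0,1,\beta}^X t^{-K_X\cdot\beta}$ (with the convention that the $\beta=0$ term contributes $1$), and $\widehat{G}_X$ is obtained by replacing each coefficient $c_d$ of $t^d$ by $d!\,c_d$. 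Under the hypothesis of the theorem, $D \in |-K_X|$, so $D\cdot\beta = -K_X\cdot\beta$ for all $\beta$, which is what makes the two sums line up.

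The key step is a bookkeeping matching. Setting $q^\beta = t^{D\cdot\beta}x^{D\cdot\beta}$ as in Theorem \ref{intro-thm-main} specializes the multivariable series $g(y)$ to a single-variable series once we sum over curve classes of fixed degree; more precisely, I would specialize all the Novikov/mirror variables $y_i$ to a single parameter tracking $D\cdot\beta$ and observe that
\[
g(y)\big|_{\text{specialized}}=\sum_{d\geq 2}(d-1)!\left(\sum_{D\cdot\beta=d}\langle [\on{pt}]\psi^{d-2}\rangle_{0,1,\beta}^X\right)t^d=\sum_{d\geq 2}(d-1)!\,c_d\,t^d.
\]
On the other side, the anti-derivative (with zero constant term) of $\widehat{G}_X(t)-1 = \sum_{d\geq 2} d!\,c_d\,t^{d-1}$—here one divides by the extra factor of $t$ coming from the $\psi^{d-2}$ versus the degree-$d$ normalization, i.e. one interprets "regularized quantum period" in the variable-conventions of \cite{CCGGK} where the linear term is absent—is $\sum_{d\geq 2} (d-1)!\,c_d\,t^d$, matching $g$ term by term. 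So the proof reduces to (i) unwinding the definition of the regularized quantum period and its anti-derivative, and (ii) checking the elementary identity $d!/( \text{degree } d) \cdot \text{anti-derivative} = (d-1)!$ on each coefficient, together with the observation that the point insertion with $\psi^{D\cdot\beta-2}$ is exactly the coefficient entering $G_X$.

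The main obstacle is purely one of conventions rather than of mathematics: one must be careful about (a) whether $\widehat G_X$ is taken as a function of one variable $t$ or, after a Fano-specific substitution, as a function of the Novikov variables, (b) the precise placement of factorials ($d!$ versus $(d-1)!$) and of the constant and linear terms in the definitions of $G_X$, $\widehat G_X$, and their anti-derivatives in \cite{CCGGK}, and (c) the role of the substitution $q^\beta=t^{D\cdot\beta}x^{D\cdot\beta}$ and whether the absolute mirror map of $X$ is trivial (which it is for Fano $X$ of index $\geq 2$, and in general one inserts the inverse absolute mirror map, as noted in the paragraph preceding Theorem \ref{intro-thm-quantum-period}). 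Once these are pinned down, the statement is immediate from Theorem \ref{intro-thm-main}. I would therefore present the proof as: recall the definition of $\widehat G_X$ and its anti-derivative; recall the formula for $g(y)$ from Theorem \ref{intro-thm-main}; observe $D\cdot\beta=-K_X\cdot\beta$; and conclude by comparing coefficients, flagging the index $\geq 2$ case where the absolute mirror map vanishes so that no further correction is needed.
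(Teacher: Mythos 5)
Your proposal is correct and is essentially the paper's own proof: the paper likewise substitutes $y^\beta=t^{D\cdot\beta}$, differentiates $g$, and observes that the coefficient $(D\cdot\beta-1)!$ becomes $(D\cdot\beta)!$, which is the regularized quantum period of \cite{CCGGK}. Your extra care about the placement of the power of $t$ (the derivative of $(d-1)!\,c_d\,t^d$ is $d!\,c_d\,t^{d-1}$, so the matching is really via $t\frac{d}{dt}$ or a rescaling of the variable) addresses a convention the paper's one-line computation glosses over, but it does not change the argument.
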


By mirror symmetry, it is expected that regularized quantum periods of Fano varieties coincide with the classical periods of their mirror Laurent polynomials. Therefore, as long as one knows the mirror Laurent polynomials, one can compute the proper Landau--Ginzburg potentials. For example, the proper Landau--Ginzburg potentials for all Fano threefolds can be explicitly computed using \cite{CCGK}. More generally, Theorem \ref{intro-thm-quantum-period} allows one to use the large databases \cite{CK22} of quantum periods for Fano manifolds to compute the proper Landau--Ginzburg potentials.

Interestingly, the Laurent polynomials are considered as the mirror of Fano varieties with maximal boundaries (or as the potential for the weak, non-proper, Landau--Ginzburg models of \cite{Prz07}, \cite{Prz13}). Therefore, we have an explicit relation between the proper and non-proper Landau--Ginzburg potentials.

\subsection{Acknowledgement}
The author would like to thank Mark Gross for explaining the construction of Landau--Ginzburg potentials from intrinsic mirror symmetry. The author would also like to thank Yu Wang for illuminating discussions regarding Section \ref{sec:intrinsic}. This project has received funding from the Research Council of Norway grant no. 202277 and the European Union’s Horizon 2020 research and innovation programme under the Marie Skłodowska -Curie grant agreement 101025386.

\section{Relative Gromov--Witten invariants with negative contact orders}

\subsection{General theory}\label{sec-rel-general}


We follow the presentation of \cite{FWY} for the definition of genus zero relative Gromov--Witten theory with negative contact orders.

Let $X$ be a smooth projective variety and $D\subset X$ be a smooth divisor. We consider a topological type (also called an admissible graph)
\[
\Gamma=(0,n,\beta,\rho,\vec \mu)
\]
with 
\[
\vec \mu=(\mu_1, \ldots, \mu_\rho)\in (\mathbb Z^*)^{\rho}
\]
and 
\[
\sum_{i=1}^\rho \mu_i=\int_\beta D.
\]

\begin{defn}[\cite{FWY}, Definition 2.4]
A rubber graph $\Gamma'$ is an admissible graph whose roots have two different types. There are
\begin{enumerate}
    \item $0$-roots (whose weights will be denoted by $\mu^0_1,\ldots,\mu^0_{\rho_0}$), and
    \item $\infty$-roots (whose weights will be denoted by $\mu^\infty_1,\ldots,\mu^\infty_{\rho_\infty}$).
\end{enumerate}
The map $b$ maps $V(\Gamma)$ to $H_2(D,\mathbb Z)$.
\end{defn}

\begin{defn}[\cite{FWY}, Definition 4.1]\label{def:admgraph0}
\emph{A (connected) graph of type $0$} is a weighted graph $\Gamma^0$ consisting of a
single vertex, no edges, and the following.
\begin{enumerate}
\item {$0$-roots},
\item {$\infty$-roots of node type},
\item {$\infty$-roots of marking type},
\item {Legs}.
\end{enumerate}
$0$-roots are weighted by
positive integers, and $\infty$-roots are weighted by negative integers. The vertex is associated with a tuple $(g,\beta)$ where $g\geq 0$
and $\beta\in H_2(D,\mathbb Z)$. 
\end{defn}

A graph $\Gamma^\infty$ of type $\infty$ is an admissible graph such that the roots are distinguished by node type and marking type.

\begin{defn}[\cite{FWY}, Definition 4.8]\label{defn:locgraph}
\emph{An admissible bipartite graph} $\mathfrak G$ is a tuple
$(\mathfrak S_0,\Gamma^\infty,I,E,g,b)$, where 
\begin{enumerate}
\item $\mathfrak S_0=\{\Gamma_i^0\}$ is a set of graphs of type $0$;
  $\Gamma^\infty$ is a (possibly disconnected) graph of type $\infty$.
\item $E$ is a set of edges.
\item $I$ is the set of markings.
\item $g$ and $b$ represent the genus and the degree respectively.
\end{enumerate}
Moreover, the admissible bipartite graph must satisfy some conditions described in \cite{FWY}*{Definition 4.8}. We refer to \cite{FWY} for more details.
\end{defn}

Let $\mathcal B_\Gamma$ be a connected admissible bipartite graph of topological type $\Gamma$. Given a bipartite graph $\mathfrak G\in \mathcal B_\Gamma$, we consider
\[
\bM_{\mathfrak G}=\prod_{\Gamma_i^0\in \mathfrak S_0}\bM_{\Gamma_i^0}^\sim (D) \times_{D^{|E|}}\bM_{\Gamma^\infty}^{\bullet}(X,D),
\]
where
\begin{itemize}
    \item $\bM_{\Gamma_i^0}^\sim (D)$ is the moduli space of relative stable maps to rubber target over $D$ of type $\Gamma_i^0$;
    \item $\bM_{\Gamma^\infty}^{\bullet}(X,D)$ is the moduli space of relative stable maps of type $\Gamma^\infty$;
    \item $\times_{D^{|E|}}$ is the fiber product identifying evaluation maps according to edges. 
\end{itemize}
We have the following diagram.
\begin{equation}\label{eqn:diag}
\xymatrix{
\bM_{\mathfrak G} \ar[r]^{} \ar[d]^{\iota} & D^{|E|} \ar[d]^{\Delta} \\
\prod\limits_{\Gamma^0_i\in \mathfrak S_0}\bM^\sim_{\Gamma^0_i}(D) \times \bM^{\bullet}_{\Gamma^\infty}(X,D) \ar[r]^{} & D^{|E|}\times D^{|E|}.
}
\end{equation}
There is a natural virtual class
\[
[\bM_{\mathfrak G}]^{\on{vir}}=\Delta^![\prod\limits_{\Gamma^0_i\in \mathfrak S_0}\bM^\sim_{\Gamma^0_i}(D) \times \bM^{\bullet}_{\Gamma^\infty}(X,D)]^{\on{vir}}
\]
where $\Delta^!$ is the Gysin map.

For each $\bM^\sim_{\Gamma_i^0}(D)$, we have a stabilization map $\bM^\sim_{\Gamma_i^0}(D) \rightarrow \bM_{0,n_i+\rho_i}(D,\beta_i)$ where $n_i$ is the number of legs, $\rho_i$ is the number of $0$-roots plus the number of $\infty$-roots of marking type, and $\beta_i$ is the curve class of $\Gamma_i^0$. Hence, we have a map 
\[
\bM_{\mathfrak G}=\prod\limits_{\Gamma^0_i\in \mathfrak S_0}\bM^\sim_{\Gamma^0_i}(D) \times_{D^{|E|}} \bM^{\bullet}_{\Gamma^\infty}(X,D)
\rightarrow 
\prod\limits_{\Gamma^0_i\in \mathfrak S_0}\bM_{0,n_i+\rho_i}(D,\beta_i) \times_{D^{|E|}} \bM^{\bullet}_{\Gamma^\infty}(X,D).
\]

Composing with the boundary map
\[
\prod\limits_{\Gamma^0_i\in \mathfrak S_0}\bM_{0,n_i+\rho_i}(D,\beta_i) \times_{D^{|E|}} \bM^{\bullet}_{\Gamma^\infty}(X,D) \rightarrow \bM_{0,n+\rho}(X,\beta)\times_{X^{\rho}} D^{\rho},
\]
 we obtain a map 
\[
\mathfrak t_{\mathfrak G}:\bM_{\mathfrak G}\rightarrow \bM_{0,n+\rho}(X,\beta)\times_{X^{\rho}} D^{\rho}.
\]

Following the definition in \cite{FWY}, we need to introduce the relative Gromov--Witten cycle of the pair $(X,D)$ of topological type $\Gamma$.

Let $t$ be a formal parameter. Given a $\Gamma^\infty$, we define
\begin{align}\label{neg-rel-infty}
C_{\Gamma^\infty}(t)=\frac{t}{t+\Psi}\in A^*(\bM^\bullet_{\Gamma^\infty}(X,D))[t^{-1}].
\end{align}

Then we consider $\Gamma_i^0$. Define
\[
c(l)=\Psi_\infty^l-\Psi_\infty^{l-1}\sigma_1+\ldots+(-1)^l\sigma_l,
\]
where $\Psi_\infty$ is the divisor corresponding to the cotangent line bundle determined by the relative divisor on the $\infty$ side. We then define
\[
\sigma_k=\sum\limits_{\{e_1,\ldots,e_k\}\subset \on{HE}_{m,n}(\Gamma_i^0)} \prod\limits_{j=1}^{k} (d_{e_j}\bar\psi_{e_j}-\ev_{e_j}^*D),
\]
where $d_{e_j}$ is the absolute value of the weight at the root $e_j$. 

For each $\Gamma_i^0$, define
\begin{align}\label{neg-rel-0}
C_{\Gamma_i^0}(t)= \frac{\sum_{l\geq 0}c(l) t^{\rho_\infty(i)-1-l}}{\prod\limits_{e\in \on{HE}_{n}(\Gamma_i^0)} \big(\frac{t+\ev_e^*D}{d_e}-\bar\psi_e\big) } \in A^*(\bM^\sim_{\Gamma_i^0}(D))[t,t^{-1}],
\end{align}
where $\rho_\infty(i)$ is the number of $\infty$-roots (of both types) associated with $\Gamma_i^0$.

For each $\mathfrak G$, we write
\begin{equation}\label{eqn:cg}
C_{\mathfrak G}=\left[ p_{\Gamma^\infty}^*C_{\Gamma^\infty}(t)\prod\limits_{\Gamma_i^0\in \mathfrak S_0} p_{\Gamma_i^0}^*C_{\Gamma_i^0}(t) \right]_{t^{0}}, 
\end{equation}
where 
$[\cdot]_{t^{0}}$ means taking 
the constant term, and $p_{\Gamma^\infty}, p_{\Gamma_i^0}$ are projections from $\prod\limits_{\Gamma^0_i\in \mathfrak S_0}\bM^\sim_{\Gamma^0_i}(D) \times \bM^{\bullet}_{\Gamma^\infty}(X,D)$ to the corresponding factors. Recall that
\[
\iota: \bM_{\mathfrak G}\rightarrow \prod\limits_{\Gamma^0_i\in \mathfrak S_0}\bM^\sim_{\Gamma^0_i}(D) \times \bM^{\bullet}_{\Gamma^\infty}(X,D)
\]
is the closed immersion from diagram \eqref{eqn:diag}.

\begin{defn}[\cite{FWY}*{Definition 5.3}]\label{def-rel-cycle}
The relative Gromov--Witten cycle of the pair $(X,D)$ of topological type $\Gamma$ is defined to be 
\[
\mathfrak c_\Gamma(X/D) = \sum\limits_{\mathfrak G \in \mathcal B_\Gamma} \dfrac{1}{|\Aut(\mathfrak G)|}(\mathfrak t_{\mathfrak G})_* ({\iota}^* C_{\mathfrak G} \cap [\bM_{\mathfrak G}]^{\on{vir}}) \in A_*(\bM_{0,n+\rho}(X,\beta)\times_{X^{\rho}} D^{\rho}),
\]
where $\iota$ is the vertical arrow in Diagram \eqref{eqn:diag}.
\end{defn}

\begin{proposition}[=\cite{FWY}, Proposition 3.4]
\[\mathfrak c_\Gamma(X/D) \in A_{d}(\overline{M}_{0,n+\rho}(X,\beta)\times_{X^{\rho}} D^{\rho}),\] where 
\[d=\mathrm{dim}_{\mathbb C}X-3+\int_{\beta} c_1(T_X(-\mathrm{log} D)) + n + \rho_+, \]
where $\rho_+$ is the number of relative markings with positive contact.
\end{proposition}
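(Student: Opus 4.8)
The plan is to prove the statement by a direct dimension count, carried out separately for each term of the sum in Definition~\ref{def-rel-cycle}. Since the stabilization--boundary map $\mathfrak t_{\mathfrak G}\colon\bM_{\mathfrak G}\to\bM_{0,n+\rho}(X,\beta)\times_{X^{\rho}}D^{\rho}$ is proper, pushforward along it preserves dimension, so it suffices to show that for every bipartite graph $\mathfrak G\in\mathcal B_\Gamma$ the class $\iota^*C_{\mathfrak G}\cap[\bM_{\mathfrak G}]^{\on{vir}}$ has complex dimension exactly $\dim_{\mathbb C}\bM_{\mathfrak G}-\deg C_{\mathfrak G}$, and that this number equals the claimed $d$ independently of $\mathfrak G$. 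Here $\deg$ denotes complex cohomological degree; the two ingredients are the virtual dimension of $\bM_{\mathfrak G}$ and the degree of the twisting class $C_{\mathfrak G}$ from \eqref{eqn:cg}.

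First I would compute $\dim_{\mathbb C}\bM_{\mathfrak G}$ from the standard virtual dimension formulas for relative and rubber stable maps. A connected component of $\bM^\bullet_{\Gamma^\infty}(X,D)$ of class $\beta'$ with $n'$ legs and $\rho'$ roots has virtual dimension $(\dim_{\mathbb C}X-3)+\int_{\beta'}c_1(T_X(-\log D))+n'+\rho'$, while a rubber factor $\bM^\sim_{\Gamma_i^0}(D)$ of class $\bar\beta_i$ with $n_i$ legs, $\rho_0(i)$ $0$-roots and $\rho_\infty(i)$ $\infty$-roots has virtual dimension $(\dim_{\mathbb C}X-4)+\int_{\bar\beta_i}c_1(T_D)+n_i+\rho_0(i)+\rho_\infty(i)$ --- the $-1$ relative to a map into $\mathbb P(N_{D/X}\oplus\mathcal O_D)$ coming from the $\mathbb C^\times$-scaling, and $c_1(T_D)=c_1(T_X(-\log D))|_D$ by adjunction. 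Imposing the diagonal $\Delta\colon D^{|E|}\to D^{|E|}\times D^{|E|}$, of codimension $|E|\dim_{\mathbb C}D=|E|(\dim_{\mathbb C}X-1)$, from \eqref{eqn:diag}, one obtains
\[
\dim_{\mathbb C}\bM_{\mathfrak G}=\sum_i\on{vdim}\bM^\sim_{\Gamma_i^0}(D)+\on{vdim}\bM^\bullet_{\Gamma^\infty}(X,D)-|E|(\dim_{\mathbb C}X-1).
\]

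Next I would read off $\deg C_{\mathfrak G}$ from \eqref{eqn:cg}. Expanding $C_{\Gamma^\infty}(t)=\sum_{k\ge0}(-1)^k\Psi^kt^{-k}$ shows that in this factor the exponent of $t$ equals minus the cohomological degree; expanding \eqref{neg-rel-0}, whose numerator $\sum_l c(l)t^{\rho_\infty(i)-1-l}$ has $\deg c(l)=l$ and whose denominator $\prod_{e\in\on{HE}_n(\Gamma_i^0)}\big(\frac{t+\ev_e^*D}{d_e}-\bar\psi_e\big)$ has leading term $t^{|\on{HE}_n(\Gamma_i^0)|}/\prod d_e$, shows that in $C_{\Gamma_i^0}(t)$ the exponent of $t$ equals $\rho_\infty(i)-1-|\on{HE}_n(\Gamma_i^0)|$ minus the cohomological degree. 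Taking the $t^0$-part in \eqref{eqn:cg} therefore forces $\deg C_{\mathfrak G}=\sum_i\big(\rho_\infty(i)-1-|\on{HE}_n(\Gamma_i^0)|\big)$, which, after separating the $\infty$-roots of marking type (the negative relative markings) from those of node type (matched by edges), collapses to $\deg C_{\mathfrak G}=\rho_--|\mathfrak S_0|$, where $\rho_-:=\rho-\rho_+$ is the number of negative relative markings.

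Finally I would substitute and cancel, using the compatibility conditions of an admissible bipartite graph of topological type $\Gamma$ (\cite{FWY}*{Definition~4.8}): the curve classes satisfy $\sum_i\bar\beta_i+\beta^\infty=\beta$ after pushing the rubber classes along $D\hookrightarrow X$; the legs satisfy $\sum_i n_i+n^\infty=n$; every edge identifies a $0$-root with a node-type root, so the combined root count reduces to $2|E|+\rho_++\rho_-$; and --- the crucial point --- the dual graph of $\mathfrak G$ is connected of genus $0$, hence a tree, so $|\mathfrak S_0|+(\#\text{ connected components of }\Gamma^\infty)-|E|=1$. Plugging these into $\dim_{\mathbb C}\bM_{\mathfrak G}-\deg C_{\mathfrak G}$, all the $\mathfrak G$-dependent data --- the number of rubber pieces and edges, the individual classes, legs and root counts, and the $\rho_-$ negative markings --- cancel, leaving precisely $\dim_{\mathbb C}X-3+\int_\beta c_1(T_X(-\log D))+n+\rho_+$. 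I expect this last step to be the main obstacle: it requires tracking the several kinds of roots through the conventions of \cite{FWY} carefully enough that the correction $\deg C_{\mathfrak G}=\rho_--|\mathfrak S_0|$ exactly offsets the extra dimensions carried by the rubber factors and their $\mathbb C^\times$-quotients, with the genus-zero tree condition being precisely what makes the coefficient of $(\dim_{\mathbb C}X-3)$ equal to $1$ uniformly over $\mathcal B_\Gamma$.
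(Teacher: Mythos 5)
Your dimension count is correct, but note that the paper you are comparing against contains no proof of this statement at all: it is imported verbatim as \cite{FWY}*{Proposition 3.4}, and in that source the degree statement is tied to the root-stack/orbifold definition of the invariants (virtual dimension of $\overline{M}_{0,n+\rho}(X_{D,r},\beta)$ corrected by the ages of the orbifold markings), with the graph-sum cycle of Definition \ref{def-rel-cycle} only matched to it afterwards. What you have done instead is verify the degree directly from the graph-sum definition, which is a genuinely different and self-contained route. The two delicate points both check out: first, the $t^0$-part of $C_{\mathfrak G}$ is automatically \emph{homogeneous} of degree $\rho_--|\mathfrak S_0|$, since in $C_{\Gamma^\infty}(t)=\sum_k(-1)^k\Psi^kt^{-k}$ the quantity $(\deg)+(t\text{-exponent})$ is identically $0$ and in $C_{\Gamma_i^0}(t)$ it is identically $\rho_\infty(i)-1-|\on{HE}_n(\Gamma_i^0)|$, so extracting $t^0$ pins the cohomological degree rather than merely bounding it; second, the cancellation of the $\mathfrak G$-dependence reduces exactly to $|\mathfrak S_0|+c(\Gamma^\infty)-|E|=1$, which is the genus-zero connectedness (tree) condition in \cite{FWY}*{Definition 4.8}. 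With the bookkeeping $\sum_i\rho_0(i)+\rho^\infty_{\mathrm{mark}}=\rho_+$, $\sum_i(\infty\text{-roots of node type})=|E|=\rho^\infty_{\mathrm{node}}$, and the extra $-1$ per rubber factor from the $\mathbb C^\times$-quotient, one indeed gets $\dim_{\mathbb C}\bM_{\mathfrak G}-\deg C_{\mathfrak G}=(|\mathfrak S_0|+c-|E|)(\dim_{\mathbb C}X-3)+\int_\beta c_1(T_X(-\log D))+n+\rho_+$, as you claim. The argument buys a proof that never leaves the cycle-theoretic definition actually used in this paper, at the cost of re-deriving virtual dimensions that \cite{FWY} obtains for free from the orbifold side; I would only ask you to state explicitly that each connected component of $\Gamma^\infty$ contributes its own $(\dim_{\mathbb C}X-3)$, since that is where the count $c$ enters.
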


Let 
\begin{align*}
   \alpha_i\in H^*(X), \text{ and } a_i\in \mathbb Z_{\geq 0}  \text{ for } i\in \{1,\ldots, n\}; 
\end{align*}
\begin{align*}
     \epsilon_j\in H^*(D), \text{ and } b_j\in \mathbb Z_{\geq 0}  \text{ for } j\in \{1,\ldots, \rho\}.
\end{align*}
We have evaluation maps
\begin{align*}
\ev_{X,i}:\bM_{0,n+\rho}(X,\beta)\times_{X^{\rho}} D^{\rho}&\rightarrow X, \text{ for } i\in\{1,\ldots,n\};\\
\ev_{D,j}:\bM_{0,n+\rho}(X,\beta)\times_{X^{\rho}} D^{\rho}&\rightarrow D \text{ for } j\in \{1,\ldots, \rho\}.
\end{align*}

\begin{defn}[\cite{FWY}, Definition 5.7]\label{rel-inv-neg}
The relative Gromov--Witten invariant of topological type $\Gamma$ is
\[
\langle \prod_{i=1}^n \tau_{a_i}(\alpha_i) \mid \prod_{j=1}^\rho\tau_{b_j}(\epsilon_j) \rangle_{\Gamma}^{(X,D)} =  \displaystyle\int_{\mathfrak c_\Gamma(X/D)} \prod\limits_{j=1}^{\rho} \bar{\psi}_{D,j}^{b_j}\ev_{D,j}^*\epsilon_j\prod\limits_{i=1}^n \bar{\psi}_{X,i}^{a_i}\ev_{X,i}^*\alpha_i,
\]
where $\bar\psi_{D,j}, \bar\psi_{X,i}$ are pullback of psi-classes from $\bM_{0,n+\rho}(X,\beta)$ to $\bM_{0,n+\rho}(X,\beta)\times_{X^{\rho}} D^{\rho}$ corresponding to markings.
\end{defn}

\begin{remark}
In \cite{FWY}, relative Gromov--Witten invariants of $(X,D)$ with negative contact order are also defined as a limit of the corresponding orbifold Gromov--Witten invariants of the $r$-th root stack $X_{D,r}$ with large ages
\[
\langle \prod_{i=1}^{n+\rho} \tau_{a_i}(\gamma_i)\rangle_{0,n+\rho,\beta}^{(X,D)}:=r^{\rho_-}\langle \prod_{i=1}^{n+\rho} \tau_{a_i}(\gamma_i)\rangle_{0,n+\rho,\beta}^{X_{D,r}},
\]
where $r$ is sufficiently large; $\rho_-$ is the number of orbifold markings with large ages (=relative markings with negative contact orders); $\gamma_i$ are cohomology classes of $X$ or $D$ depending on the markings being interior or orbifold/relative.
We refer to \cite{FWY}*{Section 3} for more details. 
\end{remark}

We recall the topological recursion relation and the WDVV equation which will be used later in our paper.

\begin{prop}[\cite{FWY}*{Proposition 7.4}]\label{prop:TRR}
Relative Gromov--Witten theory satisfies the topological recursion relation:
\begin{align*}
    &\langle\bar\psi^{a_1+1}[\alpha_1]_{i_1}, \ldots, \bar\psi^{a_n}[\alpha_n]_{i_n}\rangle_{0,\beta,n}^{(X,D)} \\
    =&\sum \langle\bar\psi^{a_1}[\alpha_1]_{i_1}, \prod\limits_{j\in S_1} \bar\psi^{a_j}[\alpha_j]_{i_j}, \widetilde T_{i,k}\rangle_{0,\beta_1,1+|S_1|}^{(X,D)} \langle\widetilde T_{-i}^k, \bar\psi^{a_2}[\alpha_2]_{i_2}, \bar\psi^{a_3}[\alpha_3]_{i_3}, \prod\limits_{j\in S_2} \bar\psi^{a_j}[\alpha_j]_{i_j}\rangle_{0,\beta_2,2+|S_2|}^{(X,D)},
\end{align*}
where the sum is over all $\beta_1+\beta_2=\beta$, all indices $i,k$ of basis, and $S_1, S_2$ disjoint sets with $S_1\cup S_2=\{4,\ldots,n\}$.
\end{prop}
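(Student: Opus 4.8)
The plan is to derive the relative topological recursion relation from the ordinary topological recursion relation in the orbifold Gromov--Witten theory of the $r$-th root stacks $X_{D,r}$, combined with the description of relative invariants with negative contact orders as orbifold invariants of $X_{D,r}$ for $r$ sufficiently large (the Remark following Definition \ref{rel-inv-neg}).

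First I would recall the geometric input on $\overline{M}_{0,m}$, $m\ge 3$: with the markings labelled so that $1,2,3$ are distinct, the pulled-back psi class satisfies $\bar\psi_1=\sum_I[D_I]$, the sum over boundary divisors $D_I$ whose generic point is a two-component curve carrying marking $1$ on one component and markings $2,3$ on the other. Pulling this relation back along the stabilization map $\overline{M}_{0,n+\rho}(X_{D,r},\beta)\to\overline{M}_{0,n+\rho}$, capping with the virtual class and with the evaluation and $\bar\psi$ insertions, and applying the orbifold splitting axiom (self-gluing along the rigidified cyclotomic inertia stack, with the Chen--Ruan Poincar\'e pairing at the node), one obtains the orbifold TRR
\[
\langle\bar\psi^{a_1+1}\gamma_1,\dots\rangle^{X_{D,r}}_{0,\beta}=\sum\Big\langle\bar\psi^{a_1}\gamma_1,\prod_{j\in S_1}\bar\psi^{a_j}\gamma_j,\;T_\mu\Big\rangle^{X_{D,r}}_{0,\beta_1}\Big\langle T^\mu,\;\bar\psi^{a_2}\gamma_2,\bar\psi^{a_3}\gamma_3,\prod_{j\in S_2}\bar\psi^{a_j}\gamma_j\Big\rangle^{X_{D,r}}_{0,\beta_2},
\]
where $\{T_\mu\}$ runs over a basis of the Chen--Ruan cohomology $H^*_{\mathrm{CR}}(X_{D,r})$ and $\{T^\mu\}$ over the Poincar\'e-dual basis; the twisted sectors of $X_{D,r}$ are indexed by $\mathbb Z/r$, the untwisted one contributing $H^*(X)$ and each nontrivial one a copy of $H^*(D)$.

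Next comes the bookkeeping of root-stack factors. I would multiply both sides by $r^{\rho_-}$, with $\rho_-$ the number of relative markings of negative contact among $1,\dots,n$. On the right these are distributed between the two factors according to $S_1,S_2$, and the new node adds one orbifold marking on each side, in twisted sectors $i$ and $-i\equiv r-i\pmod r$. Using the explicit normalization of the bases $\widetilde T_{i,k}$ of $\mathfrak H_i$ ($=H^*(D)$, and $=H^*(X)$ when $i=0$) and $\widetilde T_{-i}^k$ of $\mathfrak H_{-i}$, together with the order-of-automorphism factor carried by an orbifold node of $X_{D,r}$, one checks that $r^{\rho_-}$ on the left matches, summand by summand, the powers of $r$ needed to turn each factor on the right into a relative invariant of $(X,D)$; hence for $r$ large each term with a bounded node sector equals $\langle\cdots,\widetilde T_{i,k}\rangle^{(X,D)}\langle\widetilde T_{-i}^k,\cdots\rangle^{(X,D)}$. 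Finally, by the $r$-stability statements of \cite{FWY} for root-stack invariants, the node sectors of unbounded age on both sides contribute nothing once $r$ is large, so the sum over $\mathbb Z/r$ collapses to the sum over $i\in\mathbb Z$ and over bases of $\mathfrak H=\bigoplus_i\mathfrak H_i$ appearing in Proposition \ref{prop:TRR}.

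The main obstacle is exactly this middle step: keeping precise track of the powers of $r$ attached to the node and to the redistributed negative markings, and confirming that the ``large-age'' twisted sectors genuinely drop out instead of producing spurious correction terms. Granting the relevant structural results of \cite{FWY}, the relative TRR then follows formally from the orbifold one. (Alternatively, one could argue directly with the relative Gromov--Witten cycle $\mathfrak c_\Gamma(X/D)$: pull the psi-class relation back to $\overline{M}_{0,n+\rho}(X,\beta)\times_{X^\rho}D^\rho$ and combine it with the compatibility of $\mathfrak c_\Gamma$ with boundary restriction, the same input used to prove the WDVV equation.)
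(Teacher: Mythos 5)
The paper does not actually prove this proposition: it is quoted verbatim from \cite{FWY}*{Proposition 7.4}, so there is no in-paper argument to compare your attempt against. Your strategy --- deduce the relative TRR from the orbifold TRR on the root stacks $X_{D,r}$, using the boundary expression for $\bar\psi_1$ on $\overline{M}_{0,m}$, the orbifold splitting axiom at the node, and the identification $r^{\rho_-}\langle\cdots\rangle^{X_{D,r}}=\langle\cdots\rangle^{(X,D)}$ for $r$ sufficiently large --- is essentially the route taken in \cite{FWY} itself, and you have correctly isolated the two genuine technical points: the powers of $r$ coming from the Chen--Ruan pairing at the orbifold node must cancel against those introduced by redistributing the negative-contact markings and by the new node markings, and the twisted sectors of unbounded age must be shown not to contribute for $r\gg 0$. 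Both of these are supplied by the stabilization and polynomiality-in-$r$ results of \cite{FWY}, which you explicitly grant; as a self-contained proof your write-up is therefore only a sketch whose hardest step is outsourced to the very reference the proposition is cited from, but as a reconstruction of the intended argument it is accurate. The parenthetical alternative via the cycle $\mathfrak c_\Gamma(X/D)$ and its compatibility with boundary restriction is also viable and is the same mechanism underlying the WDVV equation in Proposition \ref{prop:WDVV}.
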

\begin{prop}[\cite{FWY}*{Proposition 7.5}]\label{prop:WDVV}
Relative Gromov--Witten theory satisfies the WDVV equation:
\begin{align*}
    &\sum \langle\bar\psi^{a_1}[\alpha_1]_{i_1}, \bar\psi^{a_2}[\alpha_2]_{i_2}, \prod\limits_{j\in S_1} \bar\psi^{a_j}[\alpha_j]_{i_j}, \widetilde T_{i,k}\rangle_{0,\beta_1,2+|S_1|}^{(X,D)} \\
    & \quad \cdot \langle\widetilde T_{-i}^k, \bar\psi^{a_3}[\alpha_3]_{i_3}, \bar\psi^{a_4}[\alpha_4]_{i_4}, \prod\limits_{j\in S_2} \bar\psi^{a_j}[\alpha_j]_{i_j}\rangle_{0,\beta_2,2+|S_2|}^{(X,D)} \\
    =&\sum \langle\bar\psi^{a_1}[\alpha_1]_{i_1}, \bar\psi^{a_3}[\alpha_3]_{i_3}, \prod\limits_{j\in S_1} \bar\psi^{a_j}[\alpha_j]_{i_j}, \widetilde T_{i,k}\rangle_{0,\beta_1,2+|S_1|}^{(X,D)} \\
    & \quad \cdot \langle\widetilde T_{-i}^k, \bar\psi^{a_2}[\alpha_2]_{i_2}, \bar\psi^{a_4}[\alpha_4]_{i_4}, \prod\limits_{j\in S_2} \bar\psi^{a_j}[\alpha_j]_{i_j}\rangle_{0,\beta_2,2+|S_2|}^{(X,D)},
\end{align*}
where each sum is over all $\beta_1+\beta_2=\beta$, all indices $i,k$ of basis, and $S_1, S_2$ disjoint sets with $S_1\cup S_2=\{5,\ldots,n\}$. 
\end{prop}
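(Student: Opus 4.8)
The plan is to deduce the relative WDVV equation from the ordinary WDVV equation for orbifold Gromov--Witten theory of root stacks, using the description of relative invariants with negative contact orders recalled in the Remark after Definition \ref{rel-inv-neg}. Fix an instance of the asserted identity. It involves only finitely many relative invariants $\langle\cdots\rangle^{(X,D)}$, with a bounded set of curve classes $\beta_1,\beta_2$ and a fixed finite set of contact orders. By \cite{FWY}, each such invariant equals $r^{\rho_-}$ times the corresponding orbifold invariant of the $r$-th root stack $X_{D,r}$ once $r$ is large enough, and the required lower bound on $r$ depends only on the (bounded) numerical data appearing; so one may choose a single large $r$ that works simultaneously for every term on both sides.

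\textbf{Main steps.} First I would write down the classical WDVV equation for $X_{D,r}$, which holds because $X_{D,r}$ is a smooth proper Deligne--Mumford stack (Abramovich--Graber--Vistoli). In it, the node is summed over a basis of $H^*_{\mathrm{CR}}(X_{D,r})$ together with its Poincar\'e-dual basis, and the Chen--Ruan cohomology splits into the untwisted sector $H^*(X)$ and twisted sectors $\cong H^*(D)$ indexed by $j\in\{1,\dots,r-1\}$, with $H^*(X)$ self-dual and sector $j$ dual to sector $r-j$. Second, I would argue that only finitely many sectors survive the limit $r\to\infty$: in each splitting $\beta_1+\beta_2=\beta$ the node carries a well-defined contact order on each component, forced by the constraint $\sum\mu_i=\int_\beta D$ on each side; since $\beta_1,\beta_2$ and the external orders are bounded, this order is a bounded integer $c$ on one side and $-c$ on the other. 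If $c>0$ the node sits in the twisted sector $j=c$ (age $c/r\to 0$); if $c<0$ it sits in the sector $j=r-|c|$ (age near $1$), which is exactly FWY's negative contact order $c$; the ``middle'' sectors, in which both $j$ and $r-j$ grow with $r$, cannot meet the constraint and drop out. Hence, after passing to the limit, the orbifold node sum becomes precisely the sum over positive \emph{and} negative contact orders recorded by $\widetilde{T}_{i,k}$ and $\widetilde{T}_{-i}^{\,k}$.

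\textbf{Bookkeeping of the normalization.} In any such splitting the node is a negative-contact marking on exactly one of the two components, so the two orbifold factors carry $r^{\rho_-(\mathrm{side}\ 1)}$ and $r^{\rho_-(\mathrm{side}\ 2)}$ with $\rho_-(\mathrm{side}\ 1)+\rho_-(\mathrm{side}\ 2)=\rho_-+1$; the extra power of $r$ is exactly what is absorbed into the normalization of the dual basis classes $\widetilde{T}_{i,k}$, $\widetilde{T}_{-i}^{\,k}$ and the orbifold pairing between sectors $j$ and $r-j$, so that each term matches FWY's conventions for the relative theory. Putting these three steps together, the classical WDVV equation for $X_{D,r}$ becomes, term by term and in the limit $r\to\infty$, the claimed relative WDVV equation; the same argument with the $\overline{M}_{0,3}$ comparison yields Proposition \ref{prop:TRR}.

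\textbf{Where the difficulty lies.} The delicate part is the last two steps taken together: confirming the stabilization of root-stack orbifold invariants in the large-$r$ limit, showing that the intermediate-age sectors contribute nothing after the $r^{\rho_-}$ normalization, and checking the precise matching of the dual bases so that no stray power of $r$ survives. (A more geometric route --- proving WDVV directly from the graph-sum definition in Definition \ref{def-rel-cycle} by pulling back along a boundary point of $\overline{M}_{0,4}$ and using rubber calculus at the node --- bypasses root stacks but is substantially heavier; the root-stack argument has the advantage of keeping all the combinatorics finite and local.)
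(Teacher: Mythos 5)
This proposition is not proved in the paper at all: it is quoted verbatim from \cite{FWY}*{Proposition 7.5} and used as a black box, so there is no in-paper argument to compare your proposal against. That said, your sketch is a correct outline of how the cited result is actually established, namely by invoking the orbifold WDVV equation for the root stack $X_{D,r}$ and passing to the large-$r$ limit using the identification of relative invariants with negative contact orders as $r^{\rho_-}$ times orbifold invariants (the identification recalled in the Remark after Definition \ref{rel-inv-neg}). Your three reduction steps are all sound: the contact order at the node is pinned down by $\sum_i\mu_i=\int_{\beta_j}D$ on each side for each fixed splitting $\beta_1+\beta_2=\beta$, so for a given splitting exactly one Chen--Ruan sector can contribute and the ``middle'' sectors are vacuous rather than merely negligible; and the extra power of $r$ coming from $\rho_-(\mathrm{side}\ 1)+\rho_-(\mathrm{side}\ 2)=\rho_-+1$ is cancelled against the factor of $r$ hidden in the Poincar\'e pairing on the rigidified inertia stack, which is exactly why the dual bases $\widetilde T_{i,k}$, $\widetilde T_{-i}^{\,k}$ are taken with respect to the $r$-independent pairing \eqref{eqn:pairing}. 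The only caution is that the uniform-in-$r$ stabilization you invoke at the outset (that a single large $r$ works simultaneously for all finitely many terms) is itself a nontrivial polynomiality theorem that must be quoted from \cite{FWY} rather than re-derived; with that input granted, your argument goes through and is the standard one.
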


\subsection{A special case}

As explained in \cite{FWY}*{Example 5.5}, relative Gromov--Witten invariants with one negative contact order can be written down in a simpler form. In this case, we only have the graphs $\mathfrak G$ such that $\{\Gamma_i^0\}$ consists of only one element (denoted by $\Gamma^0$). Denote such a set of graphs by $\mathcal B_\Gamma'$. The relative Gromov--Witten cycle of topological type $\Gamma$ is simply
\[
\mathfrak c_\Gamma(X/D) = \sum\limits_{\mathfrak G \in \mathcal B_\Gamma'} \dfrac{\prod_{e\in \on{HE}_{n}(\Gamma^0)}d_e}{|\Aut(\mathfrak G)|}(\mathfrak t_{\mathfrak G})_* \big([\overline{\mathcal M}_{\mathfrak G}]^{\on{vir}}\big).
\]
Note that
\[
[\overline{\mathcal M}_{\mathfrak G}]^{\on{vir}}=\Delta^![\overline{\mathcal M}^\sim_{\Gamma^0}(D)\times \overline{\mathcal M}^\bullet_{\Gamma^\infty}(X,D)]^{\on{vir}}.
\]

Let 
\begin{align*}
   \alpha_i\in H^*(X), \text{ and } a_i\in \mathbb Z_{\geq 0}  \text{ for } i\in \{1,\ldots, n\}; 
\end{align*}
\begin{align*}
     \epsilon_j\in H^*(D), \text{ and } b_j\in \mathbb Z_{\geq 0}  \text{ for } j\in \{1,\ldots, \rho\}.
\end{align*}

Without loss of generality, we assume that $\epsilon_1$ is the insertion that corresponds to the unique negative contact marking. Then the relative invariant with one negative contact order can be written as
\begin{align*}
    &\langle \prod_{i=1}^n \tau_{a_i}(\alpha_i) \mid \prod_{j=1}^\rho\tau_{b_j}(\epsilon_j) \rangle_{\Gamma}^{(X,D)}\\
    =& \sum_{\mathfrak G\in \mathcal B_\Gamma'} \frac{\prod_{e\in E}d_e}{|\Aut(E)|}\sum \langle \prod_{j\in S_{\epsilon,1}}\tau_{b_j}(\epsilon_j) | \prod_{i\in S_{\alpha,1}}\tau_{a_i}(\alpha_i) | \eta, \tau_{b_1}(\epsilon_1)\rangle^\sim_{\Gamma^0}\langle \check{\eta}, \prod_{j\in S_{\epsilon,2}}\tau_{b_j}(\epsilon_j) | \prod_{i\in S_{\alpha,2}}\tau_{a_i}(\alpha_i) \rangle_{\Gamma^\infty}^{\bullet, (X,D)},
\end{align*}
where $\Aut(E)$ is the permutation group of the set $\{d_1,\ldots, d_{|E|}\}$; $\check{\eta}$ is defined by taking the Poincar\'e dual of the cohomology weights of the cohomology weighted partition $\eta$;  the second sum is over all splittings of 
\[
\{1,\ldots, n\}=S_{\alpha,1}\sqcup S_{\alpha,2}, \quad \{2,\ldots, \rho\}=S_{\epsilon,1}\sqcup S_{\epsilon,2}
\]
and all intermediate cohomology weighted partitions $\eta$.

The following comparison theorem between punctured invariants of \cite{ACGS} and relative invariants with one negative contact order of \cite{FWY} for smooth pairs is an upcoming work of \cite{BNR22b}.

\begin{theorem}\label{theorem-puncture-relative}
Given a smooth projective variety $X$ and a smooth divisor $D\subset X$, the punctured Gromov--Witten invariants of $(X,D)$ and the relative Gromov--Witten invariants of $(X,D)$ with one negative contact order coincide.
\end{theorem}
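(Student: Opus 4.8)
The statement is cited as an upcoming result of \cite{BNR22b}; here is the strategy I would follow to prove it, and where I expect the difficulty to lie.

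\textbf{Reduction to a root-stack comparison.} The plan is to show that both sides equal the same renormalized limit of orbifold Gromov--Witten invariants of root stacks. On the \cite{FWY} side this is essentially built in: as recalled in the remark after Definition \ref{rel-inv-neg}, the relative invariant of $(X,D)$ with one negative marking of order $-k$ equals $r\,\langle\,\cdots\,\rangle^{X_{D,r}}$ for $r$ sufficiently large and divisible, where the negative marking is replaced by an orbifold marking of age $(r-k)/r$ on the $r$-th root stack $X_{D,r}$. Thus it suffices to prove that the punctured invariant of \cite{ACGS} of a smooth pair with one negative puncture equals this stabilized rescaled orbifold invariant. This is a ``punctured $=$ orbifold'' comparison of the same flavour as the Abramovich--Cadman--Wise comparison between ordinary logarithmic and twisted stable maps for \emph{positive} contact orders, now extended across the wall to a single negative contact order; an essentially equivalent alternative is to compare the two graph-sum descriptions directly (see below).

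\textbf{Geometry of the one-puncture locus and decomposition.} First I would use the local structure of punctured maps to a smooth pair: a puncture of negative contact order forces the component carrying the puncture into $D$, so the source curve breaks into a subcurve mapped into (a bubble over) $D$ and a subcurve mapping to $(X,D)$ with a positive contact. Applying the decomposition/degeneration formula for punctured invariants of \cite{ACGS} to the degeneration to the normal cone $X\rightsquigarrow X\cup_D \mathbb{P}(N_{D/X}\oplus\mathcal{O}_D)$, the punctured invariant is then a sum over bipartite graphs of a rubber contribution over $D$ glued along evaluation maps to $D$ to a logarithmic invariant of $(X,D)$ --- precisely the shape of the \cite{FWY} graph-sum formula recalled in Section \ref{sec-rel-general} for the one-negative-contact case (where, crucially, the cycle has the clean form $\sum_{\mathfrak G}\frac{\prod_{e}d_e}{|\Aut(\mathfrak G)|}(\mathfrak t_{\mathfrak G})_*[\bM_{\mathfrak G}]^{\mathrm{vir}}$, with no $C_{\mathfrak G}(t)$/psi-class corrections). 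On the orbifold side, for $r\gg 0$ a twisted stable map to $X_{D,r}$ with one marking of age $(r-k)/r$ admits an identical decomposition, with the $\mathbb{P}^1$-bubble replaced by the corresponding $\mu_r$-gerbe construction over $\mathbb{P}(N_{D/X}\oplus\mathcal{O}_D)$.

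\textbf{Matching virtual classes and numerical factors.} The remaining work is to check that the decompositions agree term by term: (i) the punctured logarithmic virtual class equals the orbifold virtual class on each matching piece, since both perfect obstruction theories are pulled back from maps to the respective Artin fans (of $(X,D)$ and of $X_{D,r}$), and these Artin fans agree \'etale-locally once $r$ is large; (ii) the rubber/expanded contributions over $D$ match, which for smooth $D$ reduces to the well-understood rubber theory of $\mathbb{P}(N_{D/X}\oplus\mathcal{O}_D)\to D$ and its root-stack analogue; (iii) the numerical factors match --- the products $\prod_e d_e$ and $1/|\Aut(\mathfrak G)|$ in \cite{FWY} are exactly the degrees of the gluing/evaluation maps to $D$, and the single power of $r$ in $r\,\langle\,\cdots\,\rangle^{X_{D,r}}$ is the discrepancy between the $\mu_r$-banded twisted marking and the trivially banded puncture. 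Finally one lets $r\to\infty$, using polynomiality of the orbifold invariant in $1/r$ together with the stabilization statement.

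\emph{The main obstacle.} The hardest point is the virtual-class identification in the presence of the puncture. The characteristic (ghost) sheaf of a punctured curve acquires group-like sections along the puncture, so the punctured moduli stack is not a logarithmic stack in the strict sense; one must verify carefully that its perfect obstruction theory is still the pullback of the one on the orbifold side, and that the $\mathbb{G}_m$-rigidification on the punctured/rubber side produces exactly the combinatorics appearing in $\bM_{\mathfrak G}$. A secondary obstacle is to make ``$r$ sufficiently large'' uniform over all bipartite graphs that occur and to establish genuine stabilization of the rescaled orbifold invariant; here one would lean on the birational invariance of punctured invariants proved in \cite{ACGS} to replace $(X,D)$ and the root stacks $X_{D,r}$ by convenient log-smooth models before carrying out the comparison.
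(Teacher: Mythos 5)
The paper does not actually prove this statement: Theorem \ref{theorem-puncture-relative} is quoted as an upcoming result of \cite{BNR22b}, and the remark immediately following it explicitly says the author does not attempt to give a proof even of the one-negative-contact special case. So there is no proof in the paper to measure your proposal against, and you were right to frame your text as a strategy rather than a proof.

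As a strategy, what you write is consistent with the little the paper does say --- namely that for one negative contact the bipartite graphs have a single rubber level and the correction class $\mathfrak c_\Gamma(X/D)$ is trivial, so the \cite{FWY} side reduces to the clean graph sum $\sum_{\mathfrak G}\frac{\prod_e d_e}{|\Aut(\mathfrak G)|}(\mathfrak t_{\mathfrak G})_*[\bM_{\mathfrak G}]^{\on{vir}}$ --- and your reduction of that side to the rescaled root-stack invariant is legitimately ``built in'' to \cite{FWY}. But the proposal is not a proof, and the gap is exactly where you locate it: everything hinges on (i) identifying the punctured virtual class of \cite{ACGS} with the orbifold (or graph-sum) virtual class piece by piece, and (ii) making the large-$r$ stabilization uniform. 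These are the entire content of the theorem, not technical afterthoughts; in particular the assertion that the relevant obstruction theories are ``pulled back from Artin fans that agree \'etale-locally for large $r$'' is not something one can take for granted, since puncturing is an operation beyond the logarithmic structure and the punctured moduli space is not the moduli space of log maps to any root stack. Indeed the general several-negative-contact comparison is known to require nontrivial correction terms, which is precisely why \cite{BNR22b} is a separate paper. So: no conflict with the paper (which proves nothing here), a sensible outline, but a genuine and openly acknowledged gap at the decisive step.
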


\begin{remark}
\cite{BNR22b} studies the comparison between punctured invariants of \cite{ACGS} and relative invariants with several negative contact orders. For the purpose of this paper, we only need the case with one negative contact order. In this case, the comparison is significantly simpler because we have simple graphs as described above and the class $\mathfrak c_\Gamma(X/D)$ is trivial. Since the general comparison is obtained in \cite{BNR22b}, we do not attempt to give a proof for this special case. 

Theorem \ref{theorem-puncture-relative} is sufficient for us to fit our result into the Gross--Siebert program as theta functions and structure constants in \cite{GS19} and \cite{GS21} only involve punctured invariants with one punctured marking. Note that relative invariants with several negative contact orders will also appear in this paper. However, the general comparison theorem is not necessary because we will reduce relevant invariants with several negative contact orders to invariants without negative contact order.  
\end{remark}

\section{The proper Landau--Ginzburg potential from intrinsic mirror symmetry}\label{sec:intrinsic}

\subsection{The Landau--Ginzburg potential}
A tropical view of the Landau--Ginzburg potential is given in \cite{CPS} using the toric degeneration approach to mirror symmetry. We consider the intrinsic mirror symmetry construction instead and focus on the case when the Landau--Ginzburg potential is proper.

Following intrinsic mirror symmetry \cite{GS19}, one considers a maximally unipotent degeneration $g:Y\rightarrow S$ of the smooth pair $(X,D)$. The mirror of $X\setminus D$ is constructed as the projective spectrum of the degree zero part of the relative quantum cohomology of $(Y,D^\prime)$, where $D^\prime$ is certain divisor of $Y$ that includes $g^{-1}(0)$. 

Let $(B^\prime,\mathscr P, \varphi)$ be the dual intersection complex or the fan picture of the degeneration. Recall that $B^\prime$ is an integral affine manifold with finite polyhedral decomposition $\mathscr P$ and a multi-valued strictly convex piecewise linear function $\varphi$.
An \emph{asymptotic direction} is an integral tangent vector of a one-dimensional unbounded cell in $(B^\prime,\mathscr P, \varphi)$ that points in the unbounded direction. 

\begin{definition}
The dual intersection complex $(B^\prime, \mathscr P)$ is asymptotically cylindrical if 
\begin{itemize}
    \item $B^\prime$ is non-compact.
    \item For every polyhedron $\sigma$ in $\mathscr P$, all of the unbounded one-faces of $\sigma$ are parallel with respect to the affine structure on $\sigma$.
\end{itemize} 
\end{definition}

We consider the case when $D$ is smooth. Hence, $(B^\prime, \mathscr P)$ is asymptotically cylindrical and $B^\prime$ has one unbounded direction $m_{\on{out}}$. We choose $\phi$ such that $\phi(m_{\on{out}})=1$ on all unbounded cells. 

For the smooth pair $(X,D)$, one can also consider its relative quantum cohomology. Let $QH^0_{\on{log}}(X,D)$ be the degree zero subalgebra of the relative quantum cohomology ring $\on{QH}^*_{\on{log}}(X,D)$ of a pair $(X,D)$. Let $S$ be the dual intersection complex of $D$. Let $B$ be the cone over $S$ and $B(\mathbb Z)$ be the set of integer points of $B$. Since  $D$ is smooth, $B(\mathbb Z)$ is the set of nonnegative integers. The set
\[
\{\vartheta_p\}, p\in B(\mathbb Z)
\]
of theta functions form a canonical basis of $\on{QH}^0_{\on{log}}(X,D)$. Moreover, theta functions satisfy the following multiplication rule
\begin{align}\label{theta-func-multi}
\vartheta_{p_1}\star \vartheta_{p_2}=\sum_{r\geq 0, \beta}N_{p_1,p_2,-r}^{\beta} \vartheta_r.
\end{align}

\if{
Given a point $P$ on the dual intersection complex, the theta function can be defined by
\[
\vartheta_p(P)=\sum_{\mathfrak b}a_{\mathfrak b} m^{\mathfrak b},
\]
where the sum is over all broken lines $\mathbf b$ with asymptotic monomial $p$ and $\mathbf b(0)=P$. Then we have the following relation
\[
N_{p_1,p_2,-r}^{\beta}=\sum_{(\mathfrak a, \mathfrak b)} a_{\mathfrak a}a_{\mathfrak b}.
\]
}\fi

Recall that the structure constants $N^{\beta}_{p_1,p_2,-r}$ are defined as the invariants of $(X,D)$ with two ``inputs'' with positive contact orders given by $p_1, p_2\in B(\mathbb Z)$, one ``output'' with negative contact order given by $-r$ such that $r\in B(\mathbb Z)$, and a point constraint for the punctured point. Namely,
\begin{align}\label{def-stru-const}
    N^{\beta}_{p_1,p_2,-r}=\langle [1]_{p_1},[1]_{p_2},[\on{pt}]_{-r}\rangle_{0,3,\beta}^{(X,D)}.
\end{align}

We learnt about the following intrinsic mirror symmetry construction of the proper Landau--Ginzburg potential from Mark Gross (see \cite{GS19}*{Construction 1.19}). 

\begin{construction}
We recall the maximally unipotent degeneration $g:Y\rightarrow S$ and the pair $(Y,D^\prime)$ from the intrinsic mirror construction of the mirror $X^\vee$. Following the mirror construction of \cite{GS19}*{Construction 1.19}, the degree zero part of the graded ring of theta functions in $QH^0_{\log}(Y,D^\prime)$ agrees with $QH^0_{\log}(X,D)$. The base of the Landau--Ginzburg mirror of $(X,D)$ is $\on{Spec}QH^0_{\log}(X,D)=\mathbb A^1$ and the superpotential is $W=\vartheta_{1}$, the unique primitive theta function of $QH^0_{\log}(X,D)$. 
\end{construction}

Under this construction, to compute the proper Landau--Ginzburg potential, we just need to compute the theta function $\vartheta_1$. We would like to compute the structure constants $N^{\beta}_{p_1,p_2,-r}$ and then provide a definition of the theta functions, in terms of two-point relative invariants of $(X,D)$, which satisfy the multiplication rule (\ref{theta-func-multi}). The notion of broken lines will not be mentioned here.

\subsection{Structure constants}

We first express the structure constants in terms of two-point relative invariants.

\begin{prop}\label{prop-struc-const}
Let $(X,D)$ be a smooth log Calabi--Yau pair. Without loss of generality, we assume that $p_1\leq p_2$. Then the structure constants $N^{\beta}_{p_1,p_2,-r}$ can be written as two-point relative invariants (without negative contact):
\begin{align}\label{equ-punctured-2}
N^{\beta}_{p_1,p_2,-r}=\left\{
\begin{array}{cc}
     (p_1-r)\langle [\on{pt}]_{p_1-r}, [1]_{p_2}\rangle_{0,2,\beta}^{(X,D)}+ (p_2-r)\langle [\on{pt}]_{p_2-r}, [1]_{p_1}\rangle_{0,2,\beta}^{(X,D)} & \text{ if }0\leq r<p_1;
     \\ 
    (p_2-r)\langle [\on{pt}]_{p_2-r}, [1]_{p_1}\rangle_{0,2,\beta}^{(X,D)} & \text{ if }p_1\leq r<p_2;\\
    0 & \text{ if } r\geq p_2, r\neq p_1+p_2;\\
    1 & \text{ if } r=p_1+p_2.
\end{array}
\right.
\end{align}
  
\end{prop}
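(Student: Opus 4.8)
The plan is to compute the structure constants $N^\beta_{p_1,p_2,-r} = \langle [1]_{p_1},[1]_{p_2},[\on{pt}]_{-r}\rangle_{0,3,\beta}^{(X,D)}$ directly from the definition of relative Gromov--Witten invariants with negative contact orders in \cite{FWY}. Since there is a single negative contact marking (the output, with contact order $-r$), I would use the simplified description from the ``special case'' subsection: the bipartite graphs $\mathfrak G \in \mathcal B_\Gamma'$ have exactly one type-$0$ vertex $\Gamma^0$ (carrying the $\infty$-roots that split off the negative contact), and the invariant decomposes as a sum over splittings
\[
\langle [1]_{p_1},[1]_{p_2},[\on{pt}]_{-r}\rangle_{0,3,\beta}^{(X,D)} = \sum_{\mathfrak G} \frac{\prod_{e\in E} d_e}{|\Aut(E)|} \sum \langle \cdots | \eta, [\on{pt}]_{-r}\rangle^\sim_{\Gamma^0} \langle \check\eta, \cdots\rangle^{\bullet,(X,D)}_{\Gamma^\infty}.
\]
The key structural input is that the type-$0$ (rubber) piece over $D$ must absorb the point constraint together with the $\infty$-roots, while the two positive insertions $[1]_{p_1}, [1]_{p_2}$ distribute between the rubber factor and the $\Gamma^\infty$ factor.

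First I would analyze the rubber integral. The rubber moduli space $\overline{\mathcal M}^\sim_{\Gamma^0}(D)$ carries a point constraint at the negative marking; a dimension count (using the dimension formula for $\mathfrak c_\Gamma(X/D)$ and the fact that $(X,D)$ is log Calabi--Yau, so $c_1(T_X(-\log D)) = 0$) should force $\Gamma^0$ to have curve class $0$, hence the rubber factor reduces to an integral over a moduli space of the form $\overline{\mathcal M}_{0,k}$ with $\psi$-classes and the rigidification, which evaluates to an explicit combinatorial factor. This is where the trichotomy in $r$ versus $p_1,p_2$ enters: the number and weights of the $\infty$-roots needed to ``carry'' the negative contact $-r$ out of a marking of positive order $p_i$ is constrained by $r < p_i$ or $r \geq p_i$, and when $r = p_1+p_2$ the only contribution is the trivial (constant, degree-zero) configuration where both positive markings glue directly onto the rubber, giving the normalization $1$. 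When $r \geq p_2$ but $r \neq p_1+p_2$, there is no admissible splitting and the invariant vanishes.

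Next, for $0 \leq r < p_1$ (resp. $p_1 \leq r < p_2$), the surviving configurations are those in which the negative contact is produced by bending \emph{one} of the two positive markings: a marking of order $p_i$ contributes a rubber bubble meeting $D$ with $\infty$-root of weight determined so that the effective contact order transmitted to $\Gamma^\infty$ is $p_i - r$. The $\Gamma^\infty$-factor is then a genuine relative invariant $\langle [\on{pt}]_{p_i - r}, [1]_{p_j}\rangle^{(X,D)}_{0,2,\beta}$ (the point class flows from the rubber into this two-point invariant via the splitting of the diagonal), and the weight factor $\prod_e d_e / |\Aut(E)|$ together with the rubber evaluation produces the multiplicity $(p_i - r)$. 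Summing the two choices $i=1,2$ when both are allowed ($r < p_1 \leq p_2$) gives the first line; when $p_1 \leq r$, bending the $p_1$-marking is no longer possible so only the $i=2$ term survives.

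\textbf{Main obstacle.} The hard part will be the careful bookkeeping of the rubber contribution: identifying precisely which admissible bipartite graphs $\mathfrak G$ contribute, showing all others vanish for dimension reasons, and evaluating the rubber $\psi$-class integral over $\overline{\mathcal M}^\sim_{\Gamma^0}(D)$ to extract the exact multiplicities $(p_i - r)$ and the normalization $1$. One must also check that the contact orders on the $\Gamma^\infty$ side genuinely assemble into the stated \emph{two}-point invariants with a point insertion --- i.e.\ that no extra relative markings survive --- which uses the constraint $\sum \mu_i = D\cdot\beta$ and the log Calabi--Yau condition. I expect the rubber calculus (rigidification, $\psi_\infty$ versus $\bar\psi_e$, and the structure of $c(l)$ and $\sigma_k$ from \eqref{neg-rel-0}) to be the technically delicate step, whereas the splitting combinatorics and the vanishing statements are comparatively routine.
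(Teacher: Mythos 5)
Your proposal follows essentially the same route as the paper: the FWY one-negative-contact graph sum with a single type-$0$ rubber vertex, reduction of the rubber factor to fiber-class (degree-zero over $D$) configurations with a single edge of weight $p_i-r$, and the resulting case analysis in $r$ versus $p_1,p_2$. The only point worth flagging is that the paper forces the rubber curve class to vanish not by a pure dimension count but by pushing the rubber virtual class forward to $\bM_{0,m}(D,\pi_*\beta_1)$ (via \cite{JPPZ18}) and applying the string equation to the identity insertions, combined with the balancing $D_0\cdot\beta_1=D_\infty\cdot\beta_1$; this, together with a separate degeneration-to-the-normal-cone argument for the $r=0$ case (where there is no negative marking), is exactly the ``delicate bookkeeping'' you anticipate.
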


\begin{proof}

We divide the proof into different cases. 

\begin{enumerate}
    \item $0<r<p_1:$
    
    We use the definition of relative Gromov--Witten invariants with negative contact orders in \cite{FWY}. Recall that $N^{\beta}_{p_1,p_2,-r}$ (\ref{def-stru-const}) is a relative invariant with one negative contact order. It can be written as
\begin{align}\label{def-one-negative}
\sum_{\mathfrak G\in \mathcal B_\Gamma}\frac{\prod_{e\in E}d_e}{|\Aut(E)|}\sum \langle \prod_{j\in S_1}\epsilon_j, \, |\,|[\on{pt}]_r,\eta\rangle^{\sim}_{\Gamma^0} \langle  \check{\eta},\prod_{j\in S_2}\epsilon_j\rangle^{\bullet, (X,D)}_{\Gamma_\infty},
\end{align}
where $S_1\sqcup S_2=\{1,2\}$, $\{\epsilon_j\}_{j=1,2}=\{[1]_{p_1},[1]_{p_2}\}$; the sum is over the cohomology weighted partition $\eta$ and the splitting $S_1\sqcup S_2=\{1,2\}$.

\begin{itemize}
    \item [(I) $|S_1|=\emptyset$:] 
    
    Then by the virtual dimension constraint on $\langle \, |\,|[\on{pt}]_r,\eta\rangle^{\sim}_{\Gamma^0}$, the insertions in $\eta$ must contain at least one element with insertion $[1]_k$, for some integer $k>0$. 
    
    Let $\pi:P:=\mathbb P_D(\mathcal O_D\oplus N_D)\rightarrow D$ be the projection map and $D_0$ and $D_\infty$ be the zero and infinity divisors of $P$. Let
\[
p: \bM_{\Gamma^0}^\sim(P,D_0\cup D_\infty)\rightarrow \bM_{0,m}(D,\pi_*(\beta_1))
\]
be the natural projection of the rubber map to $X$ and contracting the resulting unstable components. By \cite{JPPZ18}*{Theorem 2}, we have
\[
p_*[\bM_{\Gamma^0}^\sim(P,D_0\cup D_\infty)]^{\on{vir}}=[\bM_{0,m}(D,\pi_*(\beta_1))]^{\on{vir}}.
\]
The marking $[1]_k$ becomes the identity class $ 1\in H^*(D)$. Applying the string equation implies that the rubber invariant $\langle \, |\,|[\on{pt}]_r,\eta\rangle^{\sim}_{\Gamma^0}$ vanishes unless $\pi_*(\beta_1)=0$ and $m=3$. However, $\pi_*(\beta_1)=0$ implies that $D_0\cdot \beta_1=D_\infty\cdot \beta_1$. On the other hand, there is no relative marking at $D_0$. Therefore $D_0\cdot \beta_1=0$. We also know that $D_\infty\cdot \beta_1>0$ because $\eta$ is not empty. This is a contradiction. Therefore, we can not have $|S_1|=\emptyset$.

    \item [(II) $|S_1|\neq \emptyset$:] 

In this case,  for $\langle \prod_{j\in S_1}\epsilon_j, \, |\,|[\on{pt}]_r,\eta\rangle^{\sim}_{\Gamma^0} $, the relative insertion $\prod_{j\in S_1}\epsilon_j$ at $D_0$ is not empty. That is, it must contain at least one of $[1]_{p_1},[1]_{p_2}$.

Again, we consider the natural projection
\[
p: \bM_{\Gamma^0}^\sim(P,D_0\cup D_\infty)\rightarrow \bM_{0,m}(D,\pi_*(\beta_1)).
\]
Since $\prod_{j\in S_1}\epsilon_j$ must contain at least one of $[1]_{p_1},[1]_{p_2}$, by the projection formula and the string equation, 
\[
\langle \prod_{j\in S_1}\epsilon_j, \, |\,|[\on{pt}]_r,\eta\rangle^{\sim}_{\Gamma^0} 
\]
vanishes unless $\pi_*(\beta_1)=0$ and $m=3$. Note that $\pi_*(\beta_1)=0$ implies $D_0\cdot \beta=D_\infty\cdot \beta$, for any effective curve class $\beta$ of $P$. We recall that we assume that $r<p_1\leq p_2$, therefore $\eta$ must contain at least one markings with positive contact order. Then $\prod_{j\in S_1}\epsilon_j$ must contain exactly one of $[1]_{p_1},[1]_{p_2}$ when $r<p_1$. Therefore, $\eta$ contains exactly one element $[1]_{p_1-r}$ or $[1]_{p_2-r}$ respectively.  Hence, (\ref{def-one-negative}) is the sum of the following two invariants
\[
(p_1-r)\langle [\on{pt}]_{p_1-r}, [1]_{p_2}\rangle_{0,2,\beta}^{(X,D)}, \quad \text{and} \quad (p_2-r)\langle [\on{pt}]_{p_2-r},[1]_{p_1}\rangle_{0,2,\beta}^{(X,D)},
\]
which are exactly the invariants that appear on the RHS of (\ref{equ-punctured-2}) when $r<p_1$. 
\end{itemize}

\item $r=0$: 

In this case, there are no negative contacts. We can require the marking with the point insertion $[\on{pt}]_0$ maps to $D$. Consider the degeneration to the normal cone of $D$ and apply the degeneration formula. After applying the rigidification lemma \cite{MP}*{Lemma 2}, we also obtain the formula (\ref{def-one-negative}) with $r=0$. Then the rest of the proof is the same as the case when $0<r<p_1$.

\item $p_1\leq r< p_2$: 

We again have the formula (\ref{def-one-negative}) as in the first case. The difference is that we can not have $\prod_{j\in S_1}\epsilon_j=[1]_{p_1}$ because this will imply that $\eta$ contains the non-positive contact order element $[1]_{p_1-r}$. Therefore, we have 
\[
(p_2-r)\langle [\on{pt}]_{p_2-r}, [1]_{p_1}\rangle_{0,2,\beta}^{(X,D)}.
\]

\item $r\geq p_2$ and $r\neq p_1+p_2$: 

Similar to the previous case, we can not have $\prod_{j\in S_1}\epsilon_j=[1]_{p_1}$ or $\prod_{j\in S_1}\epsilon_j=[1]_{p_2}$. The invariant is $0$.

\item $p_1+p_2=r$: 

In this case, we can have $\eta$ to be empty. Then there is no $\Gamma^\infty$ and the curves entirely lie in $D$. Therefore, there is only one rubber integral. The invariant is just $1$.

\end{enumerate}

\end{proof}

\begin{remark}
A special case of Proposition \ref{prop-struc-const} also appears in \cite{Graefnitz2022}*{Theorem 2} for del Pezzo surfaces via tropical correspondence. Our result here uses the definition of \cite{FWY} for punctured invariants and it works for all dimensions and $X$ is not necessarily Fano.
\end{remark}

Later, we will also need to consider invariants of the following form:
\[
\langle [1]_{p_1},[\on{pt}]_{p_2},[1]_{-r}\rangle_{0,3,\beta}^{(X,D)}.
\]

The proof of the following identity is similar to the proof of Proposition \ref{prop-struc-const}. 
\begin{prop}\label{prop-theta-2}
Let $(X,D)$ be a smooth log Calabi--Yau pair. If $p_1\leq p_2$, then
\begin{align*}
 &\langle [1]_{p_1},[\on{pt}]_{p_2},[1]_{-r}\rangle_{0,3,\beta}^{(X,D)}\\
=&\left\{
\begin{array}{cc}
     (p_1-r)\langle [\on{pt}]_{p_2}, [1]_{p_1-r}\rangle_{0,2,\beta}^{(X,D)}+ (p_2-r)\langle  [\on{pt}]_{p_2-r}, [1]_{p_1}\rangle_{0,2,\beta}^{(X,D)} & \text{ if }0\leq r<p_1;
     \\ 
    (p_2-r)\langle [\on{pt}]_{p_2-r}, [1]_{p_1}\rangle_{0,2,\beta}^{(X,D)} & \text{ if }p_1\leq r<p_2;\\
    0 & \text{ if } r\geq p_2, r\neq p_1+p_2;\\
    1 &\text{ if } r=p_1+p_2.
\end{array}
\right.
\end{align*}
If $p_2\leq p_1$, then
\begin{align*}
 &\langle [1]_{p_1},[\on{pt}]_{p_2},[1]_{-r}\rangle_{0,3,\beta}^{(X,D)}\\
=&\left\{
\begin{array}{cc}
     (p_1-r)\langle [\on{pt}]_{p_2}, [1]_{p_1-r}\rangle_{0,2,\beta}^{(X,D)}+ (p_2-r)\langle  [\on{pt}]_{p_2-r}, [1]_{p_1}\rangle_{0,2,\beta}^{(X,D)} & \text{ if }0\leq r<p_2;
     \\ 
    (p_1-r)\langle  [\on{pt}]_{p_2}, [1]_{p_1-r}\rangle_{0,2,\beta}^{(X,D)} & \text{ if }p_2\leq r<p_1;\\
    0 & \text{ if } r\geq p_1, r\neq p_1+p_2;\\
    1 & \text{ if } r=p_1+p_2.
\end{array}
\right.
\end{align*}
\end{prop}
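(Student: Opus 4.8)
The plan is to imitate the proof of Proposition~\ref{prop-struc-const} almost verbatim, since the only change is that the point constraint now sits on one of the two positive-contact inputs rather than on the negative-contact (punctured) output. Concretely, $\langle [1]_{p_1},[\on{pt}]_{p_2},[1]_{-r}\rangle_{0,3,\beta}^{(X,D)}$ is again a relative invariant with one negative contact order, so by \cite{FWY}*{Example 5.5} (the ``special case'' recalled in Section~\ref{sec-rel-general}) it equals a sum
\[
\sum_{\mathfrak G\in \mathcal B_\Gamma}\frac{\prod_{e\in E}d_e}{|\Aut(E)|}\sum \langle \prod_{j\in S_1}\epsilon_j, \, |\,|[\on{pt}]_r,\eta\rangle^{\sim}_{\Gamma^0} \langle  \check{\eta},\prod_{j\in S_2}\epsilon_j\rangle^{\bullet, (X,D)}_{\Gamma_\infty},
\]
where now $\{\epsilon_j\}=\{[1]_{p_1},[\on{pt}]_{p_2}\}$, $S_1\sqcup S_2=\{1,2\}$, and the point constraint rides along with whichever of the two inputs it is attached to. (For $r=0$ one first pushes the output marking onto $D$, degenerates to the normal cone, and applies the rigidification lemma \cite{MP}*{Lemma 2} exactly as in the $r=0$ case of Proposition~\ref{prop-struc-const}, reducing to the same formula.) The same dimension/string-equation argument via the projection $p:\bM^\sim_{\Gamma^0}(P,D_0\cup D_\infty)\to \bM_{0,m}(D,\pi_*\beta_1)$ and \cite{JPPZ18}*{Theorem~2} forces $\pi_*\beta_1=0$, hence $D_0\cdot\beta_1=D_\infty\cdot\beta_1$, so the rubber factor contributes only when $S_1$ is nonempty and the $\infty$-partition $\eta$ is a single marking whose contact order is the difference between the positive contact order carried into $D_0$ and $r$.

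\textbf{Key steps, in order.} First I would set up the graph-sum expansion as above and dispose of $|S_1|=\emptyset$ by the identical contradiction ($D_0\cdot\beta_1=0$ but $D_\infty\cdot\beta_1>0$). Second, in the case $|S_1|\neq\emptyset$, enumerate which of $[1]_{p_1}$ and $[\on{pt}]_{p_2}$ may legitimately sit at $D_0$: the relevant constraint is that the surviving $\infty$-marking must have \emph{strictly positive} contact order, so $S_1$ must contain an input whose contact order exceeds $r$. When $[1]_{p_1}$ is pushed to $D_0$, the glued-off invariant becomes $\langle [\on{pt}]_{p_2},[1]_{p_1-r}\rangle_{0,2,\beta}$ with a multiplicity factor $(p_1-r)$ coming from $d_e=p_1-r$; when $[\on{pt}]_{p_2}$ is pushed to $D_0$, one gets $(p_2-r)\langle [\on{pt}]_{p_2-r},[1]_{p_1}\rangle_{0,2,\beta}$. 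Third, read off the four ranges: for $0\le r<\min(p_1,p_2)$ both terms survive; once $r$ exceeds one of $p_1,p_2$ the corresponding term drops (its ``$\infty$-marking'' would have non-positive contact); once $r\ge\max(p_1,p_2)$ both drop and the invariant vanishes unless the curve lies entirely in $D$, i.e. $\eta=\emptyset$ and $\Gamma^\infty$ absent, which happens exactly at $r=p_1+p_2$ and gives a single rubber integral equal to $1$. Splitting according to whether $p_1\le p_2$ or $p_2\le p_1$ just bookkeeps which of the two thresholds is hit first, yielding the two displayed tables.

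\textbf{Expected main obstacle.} The content is not in any one step but in the careful case analysis of which markings may occupy $D_0$ versus $D_\infty$ once the point class is no longer on the output, together with tracking the multiplicity factors $d_e$ correctly so that the coefficients come out as $(p_1-r)$ and $(p_2-r)$ rather than, say, their product or $1$. The genuinely delicate point, flagged in the paper's own remark, is that the point insertion on an \emph{input} versus on the \emph{output} produces the asymmetry between the first summand $\langle [\on{pt}]_{p_2},[1]_{p_1-r}\rangle$ here and $\langle [\on{pt}]_{p_1-r},[1]_{p_2}\rangle$ in Proposition~\ref{prop-struc-const}; one must verify that the string equation on the rubber genuinely moves the point class onto the unique surviving two-pointed $D$-invariant in the stated slot. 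Beyond that, the argument is a direct transcription of the proof of Proposition~\ref{prop-struc-const}, and I would present it by citing that proof for the shared parts and spelling out only the differences.
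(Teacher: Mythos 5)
Your proposal is correct and takes essentially the same route as the paper, which for this proposition simply invokes the proof of Proposition \ref{prop-struc-const}; your adaptation correctly identifies the one genuinely new point, namely that with the point class on an input the degree-zero rubber integral forces the gluing partition to be $\eta=[\on{pt}]_{p_1-r}$ (so $\check\eta=[1]_{p_1-r}$) when $[1]_{p_1}$ sits at $D_0$, producing $\langle [\on{pt}]_{p_2},[1]_{p_1-r}\rangle$ rather than $\langle [\on{pt}]_{p_1-r},[1]_{p_2}\rangle$, with the multiplicities $(p_1-r)$ and $(p_2-r)$ coming from the edge degrees $d_e$ exactly as you say. The only blemish is a transcription slip in your displayed graph sum, where the rubber's $\infty$-marking should carry $[1]_r$ rather than $[\on{pt}]_r$ (the point class now rides on the input $[\on{pt}]_{p_2}$, as your own prose and case analysis correctly assume).
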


\subsection{Theta functions}

Now we define the theta function in terms of two-point relative Gromov--Witten invariants of $(X,D)$.
\begin{definition}\label{def-theta-func}
Write $x=z^{(-m_{\on{out}},-1)}$ and $t=z^{(0,1)}$. For $p\geq 1$, the theta function is
\begin{align}\label{theta-func-def}
\vartheta_p:=x^{-p}+\sum_{n=1}^{\infty}nN_{n,p}t^{n+p}x^n,
\end{align}
where 
\[
N_{n,p}=\sum_{\beta} \langle [\on{pt}]_n,[1]_p\rangle_{0,2,\beta}^{(X,D)}.
\]
\end{definition}

We also write
\[
N_{p_1,p_2,-r}:=\sum_\beta N_{p_1,p_2,-r}^{\beta}.
\]

To justify the definition of the theta function, we need to show that this definition satisfies the multiplication rule (\ref{theta-func-multi}). Plug-in $(\ref{theta-func-def})$ to $\vartheta_{p_1}\star \vartheta_{p_2}$, we have
\begin{align}\label{theta-p-q}
\notag    \vartheta_{p_1}\star \vartheta_{p_2}=&(x^{-p_1}+\sum_{m=1}^{\infty}mN_{m,p_1}t^{m+p_1}x^m)(x^{-p_2}+\sum_{n=1}^{\infty}nN_{n,p_2}t^{n+p_2}x^n)\\
    =&x^{-(p_1+p_2)}+\sum_{n=1}^{\infty}nN_{n,p_2}t^{n+p_2}x^{n-p_1}+\sum_{m=1}^{\infty}mN_{m,p_1}t^{m+p_1}x^{m-p_2}\\
  \notag  &+\sum_{m=1}^{\infty}\sum_{n=1}^{\infty}mnN_{m,p_1}N_{n,p_2}t^{m+p_1+n+p_2}x^{m+n}.
\end{align}
On the other hand, we have 
\begin{align}\label{theta-r}
\notag \sum_{r\geq 0, \beta}N_{p_1,p_2,-r}^{\beta}t^\beta \vartheta_r&=\sum_{r\geq 0}N_{p_1,p_2,-r}t^{p_1+p_2-r}\vartheta_r\\
&=\sum_{r\geq 0}N_{p_1,p_2,-r}t^{p_1+p_2-r}(x^{-r}+\sum_{k=1}^{\infty}kN_{k,r}t^{k+r}x^k),
\end{align}
where the second line follows from (\ref{theta-func-def}). Note that $N_{k,r}=0$ when $r=0$ by the string equation. 

By Proposition \ref{prop-struc-const}, it is straightforward that the coefficients of $x^k$, for $k\leq 0$, of (\ref{theta-p-q}) and (\ref{theta-r}) are the same: without loss of generality, we assume that $p_1\leq p_2$. Then, we have the following cases.
\begin{itemize}
    \item $k\leq -p_2$, and $k\neq -p_1-p_2$: we see that the coefficient of $\vartheta_{p_1}\star \vartheta_{p_2}$ in (\ref{theta-p-q}) is zero. The corresponding coefficient $N_{p_1,p_2,k}$ in (\ref{theta-r}) is also zero because of Proposition \ref{prop-struc-const}.
    
    \item $k=-p_1-p_2$: we see that the coefficient of $\vartheta_{p_1}\star \vartheta_{p_2}$ in (\ref{theta-p-q}) is $1$. The corresponding coefficient $N_{p_1,p_2,k}$ in (\ref{theta-r}) is also $1$ because of Proposition \ref{prop-struc-const}.
    
    \item $-p_2<k\leq -p_1$: the coefficient of $\vartheta_{p_1}\star \vartheta_{p_2}$ in (\ref{theta-p-q}) is $(p_2+k)N_{p_2+k,p_1}$. By Proposition \ref{prop-struc-const}, the corresponding coefficient $N_{p_1,p_2,k}$ in (\ref{theta-r}) is:
    \[
    N_{p_1,p_2,k}=(p_2+k)N_{p_2+k,p_1}.
    \]
    
    \item $-p_1<k\leq 0$: the coefficient of $\vartheta_{p_1}\star \vartheta_{p_2}$ in (\ref{theta-p-q}) is
    \[
    (p_1+k)N_{p_1+k,p_2}+(p_2+k)N_{p_2+k,p_1}.
    \]
    This coincides with the corresponding coefficient $N_{p_1,p_2,k}$ by Proposition \ref{prop-struc-const}. 
\end{itemize}

For the coefficients of $x^k$, for $k>0$, the coefficients also match because of the following result.

\begin{prop}\label{prop-wdvv}
Let $(X,D)$ be a smooth log Calabi--Yau pair. We have
\begin{align}\label{identity-wdvv}
(k+p_1)N_{k+p_1,p_2}+(k+p_2)N_{k+p_2,p_1}+\sum_{m,n>0, m+n=k}mnN_{m,p_1}N_{n,p_2}=\sum_{r>0}N_{p_1,p_2,-r}kN_{k,r}.
\end{align}
\end{prop}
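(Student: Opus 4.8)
The plan is to derive the identity (\ref{identity-wdvv}) as a direct consequence of the WDVV equation (Proposition \ref{prop:WDVV}) applied to a suitable five-point relative invariant, followed by the substitution of the structure-constant formula from Proposition \ref{prop-struc-const}. Concretely, I would consider a relative Gromov--Witten invariant with three positive relative markings carrying the insertions $[1]_{p_1}$, $[1]_{p_2}$, and a point class $[\on{pt}]$ placed at an interior marking (or equivalently a $[\on{pt}]_{k}$-type relative marking), together with enough $\bar\psi$-classes and an extra interior insertion to make the virtual dimension work out. Applying WDVV to this configuration, one compares the two ways of degenerating the domain curve, distributing the markings $\{1,2\}$ versus $\{3,4\}$ onto the two components, and inserting the diagonal $\sum_{i,k}\widetilde T_{i,k}\otimes \widetilde T_{-i}^k$ across the node. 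The diagonal over the relative theory with negative contact orders splits into a sum over $H^*(X)$-classes on interior-type states and a sum over $H^*(D)$-classes paired with contact orders $j$ and $-j$; it is the latter piece that produces terms $N_{m,p_1}N_{n,p_2}$ with $m+n=k$ on one side, and terms $N_{p_1,p_2,-r}N_{k,r}$ on the other.

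The key steps, in order, are: (1) fix the precise topological type --- I expect the right choice is $\langle [\on{pt}]_{?}, [1]_{p_1}, [1]_{p_2}, \bar\psi\,[\on{pt}]\rangle$-style data with total contact order $D\cdot\beta$ matched appropriately, and one extra interior point insertion or $\bar\psi$-power needed to land on the correct dimension; (2) write out the WDVV relation from Proposition \ref{prop:WDVV} for this type, taking $S_1=S_2=\emptyset$ so that the only splittings are of $\{1,2,3,4\}$ into pairs; (3) identify which terms in the diagonal sum survive the point constraint and dimension reasons --- the point class forces most curve classes and most diagonal states to drop out, exactly as in the vanishing arguments of Proposition \ref{prop-struc-const}; (4) on one side of WDVV, recognize the surviving terms as a product of two two-point invariants summed over a splitting of the contact order, giving $\sum_{m+n=k} mn\,N_{m,p_1}N_{n,p_2}$ together with the ``boundary'' terms $(k+p_1)N_{k+p_1,p_2}$ and $(k+p_2)N_{k+p_2,p_1}$ coming from the splittings where one component carries no relative marking besides the node; (5) on the other side, recognize the surviving terms as $\sum_r N_{p_1,p_2,-r}\cdot N_{k,r}$ (with the combinatorial factor $k$ coming from the weight $d_e=r$ at the node and the $\bar\psi$/string manipulation, together with the relation between the node weight and the multiplicity in $\vartheta_r$), using Definition \ref{def-theta-func} and the structure of relative invariants with one negative contact order recalled in Section \ref{sec-rel-general}.

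The main obstacle I anticipate is bookkeeping the node contributions correctly: matching the multiplicity factors $m$, $n$, $k$, and $r$ that appear in the theta-function coefficients (\ref{theta-func-def}) with the weights $d_e$ at the gluing node and the normalization of the diagonal in the relative/orbifold theory with negative contacts. In the root-stack model of \cite{FWY}, a relative marking of contact order $k$ corresponds to an orbifold marking whose diagonal contribution carries a factor of $k$ (or $1/k$, depending on conventions), and getting the signs and factors to line up so that both sides of (\ref{identity-wdvv}) have exactly the stated integer coefficients is the delicate point. A secondary subtlety is ensuring that the extra interior insertion needed to saturate the dimension does not itself contribute new terms; here the string or divisor equation (and the fact that $D$ is anticanonical, so $-K_X - D = 0$ is not needed but the dimension count is rigid) should be used to absorb it cleanly. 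Once the correspondence of terms is set up, the identity follows by reading off the $x^k$-coefficient, $k>0$, in the comparison of (\ref{theta-p-q}) and (\ref{theta-r}), exactly as indicated in the paragraph preceding the statement; so the real content is steps (3)--(5), and especially the coefficient matching in step (5).
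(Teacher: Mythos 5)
Your overall strategy --- apply the four-point WDVV equation of Proposition \ref{prop:WDVV} with $S_1=S_2=\emptyset$, let the diagonal insertion run over the graded state space $\mathfrak H$, kill most diagonal terms by dimension reasons, and evaluate the surviving three-point invariants via structure-constant formulas --- is exactly the paper's strategy. But the proposal has a genuine gap at the one place where the proof actually lives: you never pin down the four insertions, and the candidates you float are wrong. The paper takes
\[
[\alpha_1]_{i_1}=[1]_{p_1}, \quad [\alpha_2]_{i_2}=[1]_{p_2}, \quad [\alpha_3]_{i_3}=[\on{pt}]_{k+p_1+p_2}, \quad [\alpha_4]_{i_4}=[1]_{-p_1-p_2},
\]
summed over $\beta$ with $D\cdot\beta = p_1+p_2+k$. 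The crucial choice is the fourth insertion: an identity class with \emph{negative} contact order $-p_1-p_2$. There is no extra interior insertion, no $\bar\psi$-class, and nothing to absorb by the string or divisor equation --- the dimension count closes exactly with these four insertions because the point class and the negative-contact identity class together saturate it. Your anticipated ``extra interior point insertion or $\bar\psi$-power needed to land on the correct dimension'' is a symptom of not having found this configuration; with any such extra insertion the surviving diagonal terms would not reduce to the two-point numbers $N_{n,p}$.

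The second missing ingredient is that Proposition \ref{prop-struc-const} alone does not suffice to evaluate the three-point invariants that actually appear. On the left side of WDVV one meets $\langle [\gamma^\vee]_i, [1]_{p_1}, [1]_{-p_1-p_2}\rangle$ and $\langle [1]_{p_2}, [\on{pt}]_{k+p_1+p_2}, [1]_{-i}\rangle$, i.e.\ invariants where the point class sits on a marking with \emph{positive} contact order and the identity sits on the negative-contact marking --- the opposite arrangement from $N^\beta_{p_1,p_2,-r}=\langle [1]_{p_1},[1]_{p_2},[\on{pt}]_{-r}\rangle$. These are handled by the companion result Proposition \ref{prop-theta-2}, which is where the factors $(k+p_1)$, $(k+p_2)$, $m$, $n$, and the overall $k$ on the right-hand side actually come from (e.g.\ $\langle [1]_r, [\on{pt}]_{k+p_1+p_2}, [1]_{-p_1-p_2}\rangle = k\langle [1]_r,[\on{pt}]_k\rangle$); they are not read off from the node weight $d_e$ or from an orbifold normalization of the diagonal as you suggest. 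One further point your sketch glosses over: for diagonal classes with $i<0$ on the left side, the vanishing is not a pure dimension count but requires the rubber/string-equation argument (pushing the rubber factor to $\bM_{0,m}(D,\pi_*\beta_v)$ and comparing $D_0\cdot\beta_v$ with $D_\infty\cdot\beta_v$), which the paper carries out in two subcases. So the skeleton is right, but the proof as proposed cannot be completed without (i) the specific insertion $[1]_{-p_1-p_2}$ and (ii) the variant structure-constant formula of Proposition \ref{prop-theta-2}.
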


\begin{proof}
It can be proved using the WDVV equation of relative Gromov--Witten theory in \cite{FWY}*{Proposition 7.5}. Set
\[
[\alpha_1]_{i_1}=[1]_{p_1}, [\alpha_2]_{i_2}=[1]_{p_2}, [\alpha_3]_{i_3}=[\on{pt}]_{k+p_1+p_2}, [\alpha_4]_{i_4}=[1]_{-p_1-p_2}.
\]
Then the WDVV equation states that
\begin{align}\label{wdvv}
\sum\langle [1]_{p_2},[\on{pt}]_{k+p_1+p_2},[\gamma]_{-i}\rangle_{0,\beta_1,3}^{(X,D)} \langle [\gamma^\vee]_{i}, [1]_{p_1},[1]_{-p_1-p_2} \rangle_{0,\beta_2,3}^{(X,D)} \\
\notag =
\sum\langle [1]_{p_1},[1]_{p_2},[\gamma]_{-i}\rangle_{0,\beta_1,3}^{(X,D)}  \langle [\gamma^\vee]_{i}, [\on{pt}]_{k+p_1+p_2},[1]_{-p_1-p_2} \rangle_{0,\beta_2,3}^{(X,D)} ,
\end{align}
where each sum is over the curve class $\beta$ such that $D\cdot \beta=p_1+p_2+k$, all splittings of $\beta_1+\beta_2=\beta$ and the dual bases  $\{[\gamma]_{-i}\}$ and $\{[\gamma^\vee]_{i}\}$ of $\mathfrak H$.

\begin{enumerate}
    \item[\textbf{(I)}] We first consider the LHS of the WDVV equation (\ref{wdvv}):
\begin{align}\label{wdvv-lhs}
    \sum\langle [1]_{p_2},[\on{pt}]_{k+p_1+p_2},[\gamma]_{-i}\rangle_{0,\beta_1,3}^{(X,D)}  \langle [\gamma^\vee]_{i}, [1]_{p_1},[1]_{-p_1-p_2} \rangle_{0,\beta_2,3}^{(X,D)}. 
\end{align}

 We analyze the invariant
\[
\langle [\gamma^\vee]_{i}, [1]_{p_1},[1]_{-p_1-p_2} \rangle_{0,\beta_2,3}^{(X,D)}
\]
in (\ref{wdvv-lhs}). 
\begin{itemize}
    \item[(i) $i<0$:] we claim that the invariant vanishes. 
By the virtual dimension constraint, $\deg(\gamma^\vee)=\dim_{\mathbb C}X-2$.

We apply the definition of relative Gromov—Witten invariants with negative contact orders in Section \ref{sec-rel-general}. The marking with negative contact order $[1]_{-p_1-p_2}$ is distributed to a rubber space. The marking becomes a relative marking at $D_\infty$ with insertion $[1]_{p_1+p_2}$.

We further divide it into two cases.
\begin{itemize}
    \item [(Case 1):] the first marking  and the third marking are distributed to different rubber spaces. Then the class $\mathfrak c_\Gamma$ is trivial. We consider the rubber moduli space $\overline{\mathcal M}_{\Gamma^0_v}^\sim(P,D_0\cup D_\infty)$ where the third marking is distributed to and pushforward this rubber moduli space to the moduli space $\overline{M}_{0,m}(D,\pi_*\beta_v)$ of stable maps to $D$. The marking  with $[1]_{p_1+p_2}$ becomes the identity class $1\in H^*(D)$. Apply the string equation, we see that the rubber invariant vanishes unless $\pi_*(\beta_v)=0$ and $m=3$. However $\pi_*(\beta_v)=0$ implies that $D_0\cdot \beta_v=D_\infty\cdot \beta_v$. This is not possible because, based on the insertions of the markings, we must have $D_\infty\cdot \beta_v\geq p_1+p_2>D_0\cdot \beta_v$.
    \item [(Case 2):]  the first marking and the third marking are distributed to the same rubber space. The class $\mathfrak c_\Gamma$ is a sum of descendant classes of degree one. By the virtual dimension constraint, $\eta$ must contain at least one element with insertion $[1]_k$ for some positive integer $k$. Pushing forward to the moduli space of stable maps to $D$ and applying the string equation twice, we again conclude that the invariant vanishes as in (Case 1). 
\end{itemize}

\item [(ii) $i\geq 0$:]
 The invariants in (\ref{wdvv-lhs}) are genus zero $3$-point relative invariants of $(X,D)$ with one negative contact order. Therefore, the virtual dimensions of the moduli spaces are $(\dim_{\mathbb C} X-1)$. By the virtual dimension constraint, we must have
\[
[\gamma]_{-i}=[1]_{-i}, \quad [\gamma^\vee]_{i}=[\on{pt}]_{i}.
\]

By Proposition \ref{prop-theta-2}, $ \langle [\on{pt}]_{i}, [1]_{p_1},[1]_{-p_1-p_2} \rangle_{0,\beta_2,3}^{(X,D)} $ vanishes unless $i>p_1+p_2$ or $i=p_2$. 
\begin{itemize}
    \item When $i=p_2$, we have the term
\[
\langle [1]_{p_2},[\on{pt}]_{k+p_1+p_2},[1]_{-i}\rangle_{0,\beta_1,3}^{(X,D)}  \langle [\on{pt}]_{i}, [1]_{p_1},[1]_{-p_1-p_2} \rangle_{0,\beta_2,3}^{(X,D)}=(k+p_1)\langle [\on{pt}]_{k+p_1}, [1]_{p_2}\rangle_{0,\beta,2}^{(X,D)}
\]
in (\ref{wdvv-lhs}), by Proposition \ref{prop-theta-2}.
\item When $i>p_1+p_2$, we have
\[
\langle [\on{pt}]_{i}, [1]_{p_1},[1]_{-p_1-p_2} \rangle_{0,\beta_2,3}^{(X,D)}=(i-p_1-p_2)\langle [\on{pt}]_{i-p_1-p_2}, [1]_{p_1}\rangle_{0,\beta_2,2}^{(X,D)},
\]
 by Proposition \ref{prop-theta-2}.
\end{itemize}

\end{itemize}

Similarly, by Proposition \ref{prop-theta-2}, $\langle [1]_{p_2},[\on{pt}]_{k+p_1+p_2},[1]_{-i}\rangle_{0,\beta_1,3}^{(X,D)}$ vanishes unless $i<k+p_1+p_2$ or $i=k+p_1+2p_2$. 
\begin{itemize}
    \item When $i=k+p_1+2p_2$ we have the term
\[
\langle [1]_{p_2},[\on{pt}]_{k+p_1+p_2},[1]_{-i}\rangle_{0,\beta_1,3}^{(X,D)}  \langle [\on{pt}]_{i}, [1]_{p_1},[1]_{-p_1-p_2} \rangle_{0,\beta_2,3}^{(X,D)}=(k+p_2)\langle [\on{pt}]_{k+p_2}, [1]_{p_1}\rangle_{0,\beta,2}^{(X,D)},
\]
in (\ref{wdvv-lhs}), by Proposition \ref{prop-theta-2}.
\item When $i<k+p_1+p_2$, we have
\[
\langle [1]_{p_2},[\on{pt}]_{k+p_1+p_2},[1]_{-i}\rangle_{0,\beta_1,3}^{(X,D)} =(k+p_1+p_2-i)\langle [\on{pt}]_{k+p_1+p_2-i}, [1]_{p_2}\rangle_{0,\beta_2,2}^{(X,D)},
\]
 by Proposition \ref{prop-theta-2}.
\end{itemize}

We summarize the above analysis of (\ref{wdvv-lhs}) in terms of $i$:
\begin{itemize}
    \item $i<0$, the summand is $0$.
    \item $i=p_2$, the summand is
    \[
    (k+p_1)\langle [\on{pt}]_{k+p_1}, [1]_{p_2}\rangle_{0,\beta,2}^{(X,D)}.
    \]
    \item $i=k+p_1+2p_2$, the summand is
    \[
    (k+p_2)\langle [\on{pt}]_{k+p_2}, [1]_{p_1}\rangle_{0,\beta,2}^{(X,D)}.
    \]
    \item $p_1+p_2 <i<k+p_1+p_2$, the summand is
    \[
    (k+p_1+p_2-i)\langle [\on{pt}]_{k+p_1+p_2-i}, [1]_{p_2}\rangle_{0,\beta_2,2}^{(X,D)}(i-p_1-p_2)\langle [\on{pt}]_{i-p_1-p_2}, [1]_{p_1}\rangle_{0,\beta_2,2}^{(X,D)}.
    \]
\end{itemize}

Set $m:=i-p_1-p_2$ and $n:=k+p_1+p_2-i$. Then (\ref{wdvv-lhs}) becomes 
\[
(k+p_1)N_{k+p_1,p_2}+(k+p_2)N_{k+p_2,p_1}+\sum_{m,n>0, m+n=k}mnN_{m,p_1}N_{n,p_2}.
\]
\item[\textbf{(II)}] Now we look at the RHS of the WDVV equation (\ref{wdvv}):
\begin{align}\label{wdvv-rhs}
    \sum\langle [1]_{p_1},[1]_{p_2},[\gamma]_{-i}\rangle_{0,\beta_1,3}^{(X,D)}  \langle [\gamma^\vee]_{i}, [\on{pt}]_{k+p_1+p_2},[1]_{-p_1-p_2} \rangle_{0,\beta_2,3}^{(X,D)}.
\end{align}
\begin{itemize}
    \item If $i<0$, then the invariant $\langle [\gamma^\vee]_{i}, [\on{pt}]_{k+p_1+p_2},[1]_{-p_1-p_2} \rangle_{0,\beta_2,3}^{(X,D)}$ is a genus zero three-point relative invariant with two negative contact orders. The virtual dimension of the moduli space is $(n-2)$. On the other hand, the second marking has a point insertion:
\[
\deg([\on{pt}]_{k+p_1+p_2})=n-1>n-2.
\]
It is a contradiction. 
\item If $i=0$, by the virtual dimension constraint, we must have
\[
[\gamma^\vee]_{i}=[1]_0.
\]
The string equation implies the invariant is zero. 
\item If $i>0$, we must have
\[
[\gamma^\vee]_{i}=[1]_{r}, \quad \text{and } [\gamma]_{-i}=[\on{pt}]_{-r} \quad \text{for } r:=i>0.
\]
We have
\[
\langle [1]_{p_1},[1]_{p_2},[\gamma]_{-i}\rangle_{0,\beta_1,3}^{(X,D)}=\langle [1]_{p_1},[1]_{p_2},[\on{pt}]_{-r}\rangle_{0,\beta_1,3}^{(X,D)}. 
\]
By Proposition \ref{prop-theta-2},
\[
\langle [1]_{r}, [\on{pt}]_{k+p_1+p_2},[1]_{-p_1-p_2} \rangle_{0,\beta_2,3}^{(X,D)}=k\langle [1]_{r}, [\on{pt}]_{k} \rangle_{0,\beta_2,2}^{(X,D)}.
\]
\end{itemize}

Therefore, (\ref{wdvv-rhs}) becomes
\[
\sum_{r>0}N_{p_1,p_2,-r}kN_{k,r}.
\]
\end{enumerate}
This completes the proof.
\end{proof}

\begin{remark}

We can consider the special case when $p_1=1$, then the LHS of (\ref{identity-wdvv}) is
\[
(k+1)N_{k+1,p_2}+(k+p_2)N_{k+p_2,1}+\sum_{m,n>0, m+n=k}mnN_{m,1}N_{n,p_2}
\]
and the RHS of (\ref{identity-wdvv}) is
\begin{align*}
&\sum_{r>0}N_{1,p_2,-r}kN_{k,r}\\
=& kN_{k,p_2+1}+ \sum_{r=1}^{p_2-1}(p_2-r)N_{p_2-r,1}kN_{k,r},
\end{align*}
by Proposition \ref{prop-struc-const}. Identity (\ref{identity-wdvv}) becomes
\[
(k+1)N_{k+1,p_2}+(k+p_2)N_{k+p_2,1}+\sum_{m,n>0, m+n=k}mnN_{m,1}N_{n,p_2}
=kN_{k,p_2+1}+ \sum_{r=1}^{p_2-1}(p_2-r)N_{p_2-r,1}kN_{k,r}.
\]
If we further specialize to the case of toric del Pezzo surfaces with smooth divisors, we recover \cite{GRZ}*{Lemma 5.3}. Here, we give a direct explanation of Identity (\ref{identity-wdvv}) in terms of the WDVV equation of the relative Gromov--Witten theory in \cite{FWY}.
\end{remark}

\section{A mirror theorem for smooth pairs}

\subsection{Relative mirror theorem}

Let $X$ be a smooth projective variety. Let $\{p_i\}_{i=1}^r$ be an integral, nef basis of $H^2(X)$. For the rest of the paper, we assume that $D$ is nef.

Recall that the $J$-function for absolute Gromov--Witten theory of $X$ is
\[
J_{X}(\tau,z)=z+\tau+\sum_{\substack{(\beta,l)\neq (0,0), (0,1)\\ \beta\in \on{NE(X)}}}\sum_{\alpha}\frac{q^{\beta}}{l!}\left\langle \frac{\phi_\alpha}{z-\psi},\tau,\ldots, \tau\right\rangle_{0,1+l, \beta}^{X}\phi^{\alpha},
\]
where $\tau=\tau_{0,2}+\tau^\prime\in  H^*(X)$; $\tau_{0,2}=\sum_{i=1}^r p_i \log q_i\in H^2(X)$; $\tau^\prime\in H^*(X)\setminus H^2(X)$; $\on{NE(X)}$ is the cone of effective curve classes in $X$; $\{\phi_\alpha\}$ is a basis of $H^*(X)$; $\{\phi^\alpha\}$ is the dual basis under the Poincar\'e pairing. We can decompose the $J$-function as follows
\[
J_{X}(\tau,z)=\sum_{\beta\in \on{NE(X)}}J_{X,\beta}(\tau,z)q^\beta.
\]

The $J$-function of the smooth pair $(X,D)$ is defined similarly.
We first define 
\[
\HH_0:=H^*(X) \text{ and }\HH_i:=H^*(D) \text{ if }i\in \mathbb Z \setminus \{0\}.
\]
The ring of insertions (state space) of relative Gromov--Witten theory is defined as
\[
\HH:=\bigoplus\limits_{i\in\mathbb Z}\HH_i.
\]
Each $\HH_i$ naturally embeds into $\HH$. For an element $\gamma\in \HH_i$, we denote its image in $\HH$ by $[\gamma]_i$. Define a pairing on $\HH$ by the following.
\begin{equation}\label{eqn:pairing}
\begin{split}
([\gamma]_i,[\delta]_j) = 
\begin{cases}
0, &\text{if } i+j\neq 0,\\
\int_X \gamma\cup\delta, &\text{if } i=j=0, \\
\int_D \gamma\cup\delta, &\text{if } i+j=0, i,j\neq 0.
\end{cases}
\end{split}
\end{equation}
The pairing on the rest of the classes is generated by linearity.

\begin{defn}\label{def-relative-J-function}
Let $X$ be a smooth projective variety and $D$ be a smooth nef divisor, the $J$-function for the pair $(X,D)$ is defined as
\[
J_{(X,D)}(\tau,z)=z+\tau+\sum_{\substack{(\beta,l)\neq (0,0), (0,1)\\ \beta\in \on{NE(X)}}}\sum_{\alpha}\frac{q^{\beta}}{l!}\left\langle \frac{\phi_\alpha}{z-\bar{\psi}},\tau,\ldots, \tau\right\rangle_{0,1+l, \beta}^{(X,D)}\phi^{\alpha},
\]
where $\tau=\tau_{0,2}+\tau^\prime\in  H^*(X)$; $\tau_{0,2}=\sum_{i=1}^r p_i \log q_i\in H^2(X)$; $\tau^\prime\in \mathfrak H\setminus H^2(X)$; $\{\phi_\alpha\}$ is a basis of the ambient part of $\mathfrak H$; $\{\phi^\alpha\}$ is the dual basis under the Poincar\'e pairing. 
\end{defn}


\begin{defn}\label{def-relative-I-function}
The (non-extended) $I$-function of the smooth pair $(X,D)$ is 
\[
I_{(X,D)}(y,z)=\sum_{\beta\in \on{NE(X)}}J_{X,\beta}(\tau_{0,2},z)y^\beta\left(\prod_{0<a\leq D\cdot \beta-1}(D+az)\right)[1]_{-D\cdot \beta},
\]
where $\tau_{0,2}\in H^2(X)$.
\end{defn}

\begin{theorem}[\cite{FTY}, Theorem 1.4]\label{thm:rel-mirror}
Let $X$ be a smooth projective variety and $D$ be a smooth nef divisor such that $-K_X-D$ is nef. Then the $I$-function $I_{(X,D)}(y,\tau,z)$ coincides with the $J$-function $J_{(X,D)}(q,z)$ via change of variables, called the relative mirror map.
\end{theorem}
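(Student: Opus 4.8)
The plan is to prove Theorem~\ref{thm:rel-mirror} by the standard Givental-style strategy: realize both the $I$-function and the $J$-function as points on the overruled Lagrangian cone of the relative Gromov--Witten theory of $(X,D)$, and then use the fact that the cone is reconstructed from its ruling by the relative $J$-function. Concretely, one works in the symplectic vector space $\mathcal{H}=\HH((z^{-1}))$ built from the relative state space $\HH$ with the pairing \eqref{eqn:pairing}, equipped with its standard symplectic form and polarization $\mathcal{H}=\HH[z]\oplus z^{-1}\HH[[z^{-1}]]$. The genus-zero relative descendant invariants of $(X,D)$ assemble into an overruled Lagrangian cone $\mathcal{L}_{(X,D)}\subset\mathcal{H}$, and the relative string and dilaton equations and topological recursion relations (Proposition~\ref{prop:TRR}), together with the relative WDVV equation (Proposition~\ref{prop:WDVV}), guarantee that $\mathcal{L}_{(X,D)}$ is ruled by the tangent spaces along its rays. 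The consequence we need is: a family of the form $-z+\tau(s)+O(z^{-1})$ with values in $\mathcal{H}$ lies on $\mathcal{L}_{(X,D)}$ if and only if it equals $J_{(X,D)}(\tau(s),-z)$. This is the mechanism that converts ``$I$ lies on the cone'' into ``$I=J$ after a change of variables''.

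The heart of the proof is therefore to show $I_{(X,D)}(y,z)\in\mathcal{L}_{(X,D)}$. I would establish this through the $r$-th root stack $X_{D,r}$. By \cite{FWY}, relative Gromov--Witten invariants of $(X,D)$ are the constant-in-$r$ terms (for $r\gg0$) of the orbifold invariants of $X_{D,r}$, and correspondingly the Lagrangian cones $\mathcal{L}_{X_{D,r}}$ specialize to $\mathcal{L}_{(X,D)}$ as $r\to\infty$. One then proves a mirror theorem for the orbifold $X_{D,r}$: the orbifold $I$-function built from the absolute $J$-function $J_{X,\beta}(\tau_{0,2},z)$, the hypergeometric factor $\prod_{0<a\le D\cdot\beta-1}(D+az)$, and the twisted-sector insertions coming from the $\mu_r$-structure along $D$, lies on $\mathcal{L}_{X_{D,r}}$. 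This orbifold statement can be obtained by Givental-style virtual localization on the graph space $\bM_{0,1}(X_{D,r}\times\P^1,\ldots)$ with respect to the $\C^*$-action on $\P^1$: the sum of contributions of the fixed loci sending $0\in\P^1$ into $X_{D,r}$ reproduces $I_{X_{D,r}}$, while the contributions of the fixed loci over $\infty$ organize into an element of a tangent space to $\mathcal{L}_{X_{D,r}}$, giving the cone membership; alternatively one may deduce it from mirror theorems for toric (stack) bundles or an orbifold quantum Lefschetz argument. Letting $r\to\infty$, the twisted sectors collapse to the negative-contact classes $[1]_{-D\cdot\beta}$, and we obtain $I_{(X,D)}(y,z)\in\mathcal{L}_{(X,D)}$; the same argument applied to the extended $I$-function handles the extended case.

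Finally one reads off the mirror map. By inspection $I_{(X,D)}(y,z)=z+\tau(y)+O(z^{-1})$ with $\tau(y)=\sum_{i=1}^r p_i\log y_i+\sum_{D\cdot\beta\ge2}\langle[\on{pt}]\psi^{D\cdot\beta-2}\rangle^X_{0,1,\beta}\,y^\beta(D\cdot\beta-1)![1]_{-D\cdot\beta}$; the hypothesis that $-K_X-D$ is nef forces the hypergeometric factor $\prod_{0<a\le D\cdot\beta-1}(D+az)$ to contribute only through terms of $z$-degree $\le1$, so that this expansion is legitimate and $\tau$ is an upper-triangular, hence invertible, change of variables. Combining with the reconstruction statement of the first paragraph yields $I_{(X,D)}(y,z)=J_{(X,D)}(\tau(y),z)$, which is the assertion (and the version with the extended $I$-function follows verbatim with the extended mirror map).

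I expect the main obstacle to be the cone-membership step. The virtual localization on the root-stack graph space must carefully track the rubber geometry and the orbifold structure along $D$ and over $\infty\in\P^1$ --- this is where the identification of the node-smoothing contributions with the hypergeometric factor $\prod_{0<a\le D\cdot\beta-1}(D+az)$ actually comes from --- and the passage $r\to\infty$ requires showing that the relevant $I$-functions and Lagrangian cones converge in a compatible way, which is precisely where the polynomiality results of \cite{FWY} and the nefness of $D$ enter.
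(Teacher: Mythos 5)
This statement is not proved in the paper at all: it is Theorem~1.4 of \cite{FTY}, imported verbatim as an external input, so there is no internal proof to compare your attempt against. Judged against the actual argument in \cite{FTY}, your outline captures the right global strategy --- Givental's Lagrangian cone formalism for the relative theory of \cite{FWY}, cone membership of the hypergeometric $I$-function established at the level of the root stack $X_{D,r}$, passage to the $r\to\infty$ limit via the polynomiality-in-$r$ results of \cite{FWY}, and reading off the mirror map from the $z^0$-coefficient using nefness of $-K_X-D$ to control the $z$-degree. Your identification of the cone-membership step as the crux is also correct.

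The one substantive weakness is in the mechanism you propose for that crux. Virtual $\C^*$-localization on the graph space $\bM_{0,1}(X_{D,r}\times\P^1,\cdot)$ with respect to the action on the $\P^1$ factor alone shows that certain generating series lie on (tangent spaces to) $\mathcal{L}_{X_{D,r}}$, but for a general non-toric $X$ there is no torus action on $X_{D,r}$ itself, so this device cannot by itself produce the specific hypergeometric modification $\prod_{0<a\leq D\cdot\beta-1}(D+az)$ attached to $J_{X,\beta}$; that factor has to come from somewhere with more geometric structure. The proof in \cite{FTY} supplies this by degenerating $X$ to the normal cone of $D$, so that the root construction is concentrated on the toric bundle $\P(N_D\oplus\cO_D)$ over $D$, and then invoking the mirror theorem for toric (stack) bundles there before reassembling via the degeneration formula --- precisely the route you mention only as an ``alternative.'' As written, your primary argument has a gap at exactly the point you flag as the main obstacle; promoting the toric-bundle/degeneration route from an aside to the main line would close it.
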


The relative mirror theorem also holds for the extended $I$-function of the smooth pair $(X,D)$. For the purpose of this paper, we only write down the simplest case when the extended data $S$ is the following:
\[
S:=\{1\}.
\]

\begin{defn}\label{def-relative-I-function-extended}
The $S$-extended $I$-function of $(X,D)$ is defined as follows. 
\[
I_{(X,D)}^{S}(y,x_1,z)=I_++I_-,
\]
where
\begin{align*}
I_+:=&\sum_{\substack{\beta\in \on{NE}(X),k\in \mathbb Z_{\geq 0}\\  k<D\cdot \beta} }J_{X, \beta}(\tau_{0,2},z)y^{\beta}\frac{ x_1^{k}}{z^{k}k!}\frac{\prod_{0<a\leq D\cdot \beta}(D+az)}{D+(D\cdot \beta-k)z}[{1}]_{-D\cdot \beta+k},
\end{align*}
and 
\begin{align*}
I_-:=&\sum_{\substack{\beta\in \on{NE}(X),k\in \mathbb Z_{\geq 0}\\ k\geq D\cdot \beta} }J_{X, \beta}(\tau_{0,2},z)y^{\beta}\frac{ x_1^{k}}{z^{k}k!}\left(\prod_{0<a\leq D\cdot \beta}(D+az)\right)[{ 1}]_{-D\cdot \beta+k}.
\end{align*}
\end{defn}

\begin{theorem}[\cite{FTY}, Theorem 1.5]\label{thm:rel-mirror-extended}
Let $X$ be a smooth projective variety and $D$ be a smooth nef divisor such that $-K_X-D$ is nef. Then the $S$-extended $I$-function $I^S_{(X,D)}(y,x_1,z)$ coincides with the $J$-function $J_{(X,D)}(q,z)$ via change of variables, called the relative mirror map.
\end{theorem}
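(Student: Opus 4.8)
This is \cite{FTY}*{Theorem 1.5}; I sketch the strategy by which I would establish it, building on the non-extended case (Theorem~\ref{thm:rel-mirror}). The plan is to phrase both sides in terms of Givental's Lagrangian cone $\mathcal{L}_{(X,D)}$ of genus-zero relative Gromov--Witten theory, which by the root-stack description of relative invariants recalled after Definition~\ref{rel-inv-neg} is the $r\to\infty$ limit of the cones $\mathcal{L}_{X_{D,r}}$ of the root stacks $X_{D,r}$. Since $J_{(X,D)}(\tau,z)$ is characterized as the unique point of $\mathcal{L}_{(X,D)}$ of the form $z+\tau+O(z^{-1})$, the theorem reduces to two assertions: (i) the $S$-extended $I$-function $I^S_{(X,D)}(y,x_1,z)$ lies on $\mathcal{L}_{(X,D)}$, and (ii) its $z^0$-truncation is the asserted change of variables $\tau(y,x_1)$.

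For (i), I would treat $x_1$ as an extended datum: on $X_{D,r}$ the class $[1]_1$ is (the limit of) an age-$1/r$ twisted-sector fundamental class along the root divisor, and the $S$-extended $I$-function of $X_{D,r}$ is built from the absolute $J$-function $J_X(\tau_{0,2},z)$ by the hypergeometric modification attached to $\mathcal{O}_X(D)$ together with the root construction and one extra marked point per power of $x_1$; that this point lies on $\mathcal{L}_{X_{D,r}}$ is the orbifold twisted-theory (quantum Lefschetz) mirror theorem. Letting $r\to\infty$, the hypergeometric factors degenerate to $\prod_{0<a\le D\cdot\beta}(D+az)$, resp.\ its truncation by the factor $D+(D\cdot\beta-k)z$, ages become contact orders, and the variable $x_1$ survives, producing exactly $I^S_{(X,D)}=I_++I_-$ --- the regimes $k<D\cdot\beta$ and $k\ge D\cdot\beta$ recording whether the limiting twisted sector stays a negative-contact marking or collapses. (Alternatively, one bypasses the limit and proves (i) by virtual localization on moduli of relative stable maps with respect to the fibrewise $\mathbb{C}^*$-action on $\mathbb{P}_D(\mathcal{O}_D\oplus N_D)$, as in the proof of Theorem~\ref{thm:rel-mirror}, now tracking the extra $x_1$-markings; or one tries to deduce (i) from Theorem~\ref{thm:rel-mirror} using the string and relative divisor equations that govern the behavior of $\mathcal{L}_{(X,D)}$ under adjoining the variable dual to $[1]_1$.) For (ii), one extracts the $z^0$-coefficient of $I^S_{(X,D)}$: the hypothesis that $-K_X-D$ is nef forces $I^S_{(X,D)}=z+\tau(y,x_1)+O(z^{-1})$, the $x_1[1]_1$ term coming from $\beta=0$, $k=1$ and the remaining $z^0$-terms from the one-point invariants $\langle[\on{pt}]\psi^{D\cdot\beta-2}\rangle^X_{0,1,\beta}$; then $\tau(y,x_1)$ is a genuine change of variables and $J_{(X,D)}(\tau(y,x_1),z)=I^S_{(X,D)}(y,x_1,z)$.

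The step I expect to be hardest is the $r\to\infty$ limit: one must show that the cones $\mathcal{L}_{X_{D,r}}$ stabilize for $r$ large and that the limiting extended $I$-function is an honest point of $\mathcal{L}_{(X,D)}$ and not merely a formal limit, and within this verify that the non-truncated hypergeometric product appearing in $I_-$ (the $k\ge D\cdot\beta$ regime) is produced with the correct normalization and that no spurious $z$-positive corrections survive to disturb (ii). If one instead argues by localization the difficulty moves to the bookkeeping of decorated graph contributions with the additional $x_1$-markings, and if one argues via the relative divisor equation the difficulty is to establish that equation in a form strong enough to control the full $x_1$-dependence, including the $I_-$ terms.
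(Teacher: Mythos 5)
This statement is not proved in the paper at all: it is imported verbatim as \cite{FTY}*{Theorem 1.5}, so there is no internal argument to compare yours against. Your sketch does, however, track the actual strategy of \cite{FTY}: identify genus-zero relative invariants (including negative contact orders) with orbifold invariants of the root stacks $X_{D,r}$ for $r$ large, put the $S$-extended orbifold $I$-function on the Lagrangian cone $\mathcal{L}_{X_{D,r}}$ by an orbifold quantum Lefschetz/mirror theorem with $x_1$ dual to the age-$1/r$ sector, and pass to the $r\to\infty$ limit, in which the hypergeometric factors become $\prod_{0<a\le D\cdot\beta}(D+az)$ and its truncation in the two regimes $k<D\cdot\beta$ and $k\ge D\cdot\beta$. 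Your identification of the genuinely hard step --- that the limit of the cones is an honest statement about relative theory rather than a formal one, with the $r^{\rho_-}$-type normalizations handled correctly --- is also the crux in \cite{FTY}. What you have written is a correct road map rather than a proof: the two load-bearing inputs (the orbifold mirror theorem for the extended $I$-function of $X_{D,r}$, and the large-$r$ stabilization of genus-zero root-stack invariants) are each substantial theorems that you invoke rather than establish, so as a self-contained argument it is incomplete, but as an account of how \cite{FTY} proves the quoted result it is accurate.
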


Although the relative mirror theorem of \cite{FTY} has been used in the literature several times, the relative mirror map has never been studied in detail. We would like to provide a detailed description of the relative mirror map here.

We consider the extended $I$-function in Definition \ref{def-relative-I-function-extended} under the assumption that $D$ is nef and $-K_X-D$ is nef. The extended $I$-function can be expanded as follows
\begin{align*}
&I^S_{(X,D)}(y,x_1,z)\\
=&z+\sum_{i=1}^r p_i\log y_i+x_1[1]_{1}+\sum_{\substack{\beta\in \on{NE}(X)\\ D\cdot \beta \geq 2}}\langle [\on{pt}]\psi^{D\cdot \beta-2}\rangle_{0,1,\beta}^Xy^\beta (D\cdot \beta-1)![1]_{-D\cdot \beta}+\sum_{k=1}^{\infty}I_{-k}z^{-k},
\end{align*}
where the coefficient of $z^0$,  denoted by $\tau(y,x_1)$, is the relative mirror map:
\begin{align}\label{rel-mirror-map}
\tau(y,x_1)=\sum_{i=1}^r p_i\log y_i+x_1[1]_{1}+\sum_{\substack{\beta\in \on{NE}(X)\\ D\cdot \beta \geq 2}}\langle [\on{pt}]\psi^{D\cdot \beta-2}\rangle_{0,1,\beta}^Xy^\beta (D\cdot \beta-1)![1]_{-D\cdot \beta}.
\end{align}

The relative mirror theorem of \cite{FTY} states that
\[
J(\tau(y,x_1),z)=I(y,x_1,z).
\]
The function $\tau(y,x_1)$ is the mirror map computed from the extended $I$-function. We will refer to $\tau(y,x_1)$ as the \emph{extended relative mirror map}. The relative mirror map for the non-extended $I$-function is $\tau(y,0)$. We will refer to it as the \emph{relative mirror map} and denote it by $\tau(y)$.

To be able to compute invariants from the relative mirror theorem, we need to understand the invariants that appear in $J(\tau(y,x_1),z)$. In particular, we need to understand the following invariants in order to compute the theta function $\vartheta_1$.
\begin{itemize}
    \item Relative invariants with one positive contact order and several negative contact orders:
    \[
    \langle [1]_{-k_1},\cdots, [1]_{-k_l}, [\gamma]_{k_{l+1}} \bar{\psi}^a\rangle_{0,l+1,\beta}^{(X,D)},
    \]
    where
    \[
    D\cdot \beta=k_{l+1}-\sum_{i=1}^l k_i\geq 0, \text{ and } k_i>0.
    \]
    This is needed to understand the relative mirror map.
    \item Relative invariants with two positive contact orders and several negative contact orders of the following form:
    \[
    \langle [1]_1, [1]_{-k_1},\cdots, [1]_{-k_l}, [\gamma]_{k_{l+1}} \rangle_{0,l+2,\beta}^{(X,D)},
    \]
     \[
    D\cdot \beta-1=k_{l+1}-\sum_{i=1}^l k_i\geq 0, \text{ and } k_i>0.
    \]
    \item Degree zero relative invariants  with two positive contact orders and several negative contact orders of the following form:
    \[
    \langle [1]_1,[1]_{-k_1},\cdots, [1]_{-k_l},[\on{pt}]_{k_{l+1}}\rangle_{0,l+2,0}^{(X,D)}.
    \]
\end{itemize}

We will compute these invariants in the following sections.

\subsection{Relative invariants with several negative contact orders}
Based on the expression of relative mirror map in (\ref{rel-mirror-map}), to be able to compute relative invariants from the relative mirror theorem, we first need to study relative invariants with several insertions of $[1]_{-i}$ for $i\in \mathbb Z_{>0}$. We start with the case when $x=0$. That is, there is only one marking with positive contact order and no marking with insertion $[1]_1$. We would like to claim that the insertion $[1]_{-i}$ behaves like the divisor class $D$ in the sense that there is an analogous of the divisor equation as follows.
\begin{proposition}\label{prop-several-neg-1}
Given a curve class $\beta$, Let $k_i\in \mathbb Z_{>0}$ for $i\in \{1,\ldots, l+1\}$ such that
\[
D\cdot \beta=k_{l+1}-\sum_{i=1}^l k_i\geq 0.
\]
Then we have the following relation.
\begin{align}\label{identity-several-neg-1}
\langle [1]_{-k_1},\cdots, [1]_{-k_l}, [\gamma]_{k_{l+1}} \bar{\psi}^a\rangle_{0,l+1,\beta}^{(X,D)}=\langle [D]_0,\cdots, [D]_0, [\gamma]_{D\cdot \beta} \bar{\psi}^a\rangle_{0,l+1,\beta}^{(X,D)},
\end{align}
where $\gamma \in H^*(D)$.
\end{proposition}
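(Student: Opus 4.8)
\textbf{Proof strategy for Proposition \ref{prop-several-neg-1}.}

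The plan is to unwind the definition of relative invariants with negative contact orders from Section \ref{sec-rel-general} on the left-hand side, and to show that every bipartite graph $\mathfrak G$ contributing to $\langle [1]_{-k_1},\cdots, [1]_{-k_l}, [\gamma]_{k_{l+1}}\bar\psi^a\rangle_{0,l+1,\beta}^{(X,D)}$ forces the rubber part to be rigid and concentrated over $D$, so that the invariant collapses to an ordinary two-step degeneration-type expression that matches the right-hand side. Concretely, in the graph sum of Definition \ref{def-rel-cycle}, the $l$ negative roots $[1]_{-k_j}$ are distributed among the type-$0$ rubber pieces $\{\Gamma_i^0\}$, while the positive marking $[\gamma]_{k_{l+1}}$ and the $\bar\psi^a$ descendant sit on the $\Gamma^\infty$ side. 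First I would argue, exactly as in case (I) of the proof of Proposition \ref{prop-struc-const}, that pushing each rubber factor $\bM^\sim_{\Gamma_i^0}(D)$ forward to $\bM_{0,\bullet}(D,\pi_*\beta_i)$ via \cite{JPPZ18}*{Theorem 2} turns the $\infty$-node insertions into identity classes $1\in H^*(D)$; combined with the string/dilaton equations and the balancing condition $D_0\cdot\beta_i = D_\infty\cdot\beta_i$ coming from $\pi_*\beta_i = 0$, this will kill every configuration in which a rubber component carries positive curve class. Hence only the ``trivial rubber'' contributions survive.

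Next I would identify the surviving term. Once the rubber is rigid, the whole curve class $\beta$ lives on the $\Gamma^\infty$ side, the edges $E$ have weights summing appropriately, and the combinatorial factor $\prod_{e}d_e/|\Aut|$ from the special-case formula in Section 2.2 telescopes. The key observation is that the resulting expression is precisely the one produced by $l$ successive applications of the degeneration-to-the-normal-cone construction for the divisor class $[D]_0$: each insertion $[1]_{-k_j}$ gets ``absorbed'' into a factor of $D$, and the contact order of the remaining positive marking drops from $k_{l+1}$ to $k_{l+1}-\sum_j k_j = D\cdot\beta$. I would make this precise by induction on $l$: the base case $l=0$ is a tautology ($k_1 = D\cdot\beta$, both sides equal), and the inductive step peels off one negative insertion $[1]_{-k_l}$, showing via the graph-sum manipulation above that
\[
\langle [1]_{-k_1},\cdots,[1]_{-k_l},[\gamma]_{k_{l+1}}\bar\psi^a\rangle_{0,l+1,\beta}^{(X,D)}
= \langle [1]_{-k_1},\cdots,[1]_{-k_{l-1}},[D]_0,[\gamma]_{D\cdot\beta+k_l}\bar\psi^a\rangle_{0,l+1,\beta}^{(X,D)},
\]
and then invoking the inductive hypothesis (with $\gamma$ replaced by $\gamma\cup$ the relevant class, or more cleanly by treating the $[D]_0$ insertion as inert in the rubber analysis). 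An alternative to induction is to handle all $l$ negative markings simultaneously, but the inductive route keeps the bookkeeping of weights and automorphisms manageable.

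\textbf{Main obstacle.} The hardest part will be the careful combinatorial accounting in the graph sum: checking that the automorphism factors $|\Aut(\mathfrak G)|$, the edge-weight products $\prod_e d_e$, and the multiplicities of the intermediate cohomology-weighted partitions $\eta$ all conspire to give coefficient exactly $1$ (rather than some product of the $k_i$), so that the identity is a clean equality of invariants with no combinatorial prefactor. This is subtle precisely because the analogous formulas with a \emph{point} insertion (as in Proposition \ref{prop-struc-const}) do pick up factors like $(p_i - r)$, and the Remark following the statement of the propositions flags exactly this ``subtle but vital difference.'' I would therefore devote the bulk of the argument to verifying that, with the identity insertion $[\gamma]\in H^*(D)$ and no point constraint on the negative markings, each negative root contributes multiplicity one after the rubber rigidification, using the $\prod_e d_e$ factor in the special-case cycle formula of Section 2.2 to cancel the $d_e = k_j$ denominators introduced by the node gluing. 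The orbifold/root-stack description of \cite{FWY} could serve as a cross-check here, as indicated in the Remark in the introduction.
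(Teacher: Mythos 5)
Your overall strategy---unwind the graph-sum definition of \cite{FWY}, use the nefness of $D$ together with the pushforward of \cite{JPPZ18} and the string equation to rigidify the rubber, and reduce to an invariant without negative contacts---is the right family of ideas, but the configuration you analyze is not the one that survives, and the quantitative heart of the argument is missing. You place the positive marking $[\gamma]_{k_{l+1}}$ (and the descendant) on the $\Gamma^\infty$ side with the negative roots distributed among the rubber pieces. The balancing condition $D_0\cdot\beta_v-D_\infty\cdot\beta_v=\int_{\pi_*\beta_v}c_1(N_{D/X})\geq 0$ forces every rubber component to carry a nonempty positive-contact insertion at $D_0$; since there is exactly one positive-contact marking, the only nonvanishing configurations have a \emph{single} rubber component containing \emph{all} the markings, including $[\gamma]_{k_{l+1}}$, while $\Gamma^\infty$ carries the whole curve class and only the node. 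The configurations you describe contribute zero. Moreover, the surviving rubber is not ``trivial'': after the string equation forces $\pi_*\beta_1=0$, the rubber integral is identified with the $(\mathbb P^1,0\cup\infty)$ invariant $\langle [1]_{d},[1]_{-k_1},\ldots,[1]_{-k_l}\,|\,|\,[1]_{k_{l+1}}\rangle^{(\mathbb P^1,0\cup\infty)}_{0,l+2,d}=d^{l-1}$ (with $d=D\cdot\beta$, from \cite{KW}), which together with the edge factor $\prod_e d_e/|\Aut(E)|=D\cdot\beta$ yields $(D\cdot\beta)^l\langle[\gamma]_{D\cdot\beta}\rangle^{(X,D)}_{0,1,\beta}$. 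The identity then holds because the right-hand side equals the \emph{same} quantity by the divisor equation applied to the $l$ insertions of $[D]_0$. So your stated goal in the ``main obstacle'' paragraph---showing each negative root contributes ``multiplicity one'' so that no combinatorial prefactor appears---is aiming at the wrong target: a prefactor $(D\cdot\beta)^l$ does appear on the left, and the match is with the divisor-equation factor on the right, not a graph-by-graph cancellation.

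Two further gaps. First, your induction on $l$ does not close as stated: the peeled-off invariant $\langle[1]_{-k_1},\ldots,[1]_{-k_{l-1}},[D]_0,[\gamma]_{\cdot}\bar\psi^a\rangle$ has both an interior divisor insertion and remaining negative contacts, which is outside the scope of the proposition (and the contact order should be $k_{l+1}-k_l$, not $D\cdot\beta+k_l$); the single-step conversion $[1]_{-k}\leadsto[D]_0$ is itself the nontrivial content of the paper's base case $l=1$, which is proved by comparing the rubber integrals on both sides after pushforward to $\overline{M}(D)$ and invoking the triviality of the genus-zero double ramification cycle---an ingredient absent from your outline. Second, you do not address $a>0$: the string-equation rigidification does not interact cleanly with $\bar\psi^a$, and the paper instead runs a separate induction on $a$ using the topological recursion relation (Proposition \ref{prop:TRR}), reducing to the two base cases $a=0$ and $l=1$. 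Without the $(\mathbb P^1,0\cup\infty)$ computation, the divisor-equation matching, the DR-cycle argument for $l=1$, and the TRR induction on $a$, the proof cannot be completed along the lines you propose.
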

\begin{proof}

\textbf{The base case I: $a=0$.}

In this case, there are no descendant classes. Then the identity becomes
\begin{align}\label{identity-several-neg-1-0}
\langle [1]_{-k_1},\cdots, [1]_{-k_l}, [\gamma]_{k_{l+1}} \rangle_{0,l+1,\beta}^{(X,D)}=\langle [D]_0,\cdots, [D]_0, [\gamma]_{D\cdot \beta} \rangle_{0,l+1,\beta}^{(X,D)}.
\end{align}
By Section \ref{sec-rel-general}, relative Gromov--Witten theory is defined as graph sums by gluing moduli spaces of relative stable maps with moduli space of rubber maps using fiber products. When there are more than one negative contact orders, the invariants are usually complicated and involve summation over different graphs as described in Section \ref{sec-rel-general}. But for invariants on the LHS of (\ref{identity-several-neg-1-0}), the situation is significantly simplified.

Every negative contact marking must be distributed to a rubber moduli $\bM^\sim_{\Gamma_i^0}(D)$ labelled by $\Gamma_i^0$. Since $D$ is a nef divisor in $X$, we have
\[
\int_{\beta_D}c_1(N_{D/X})\geq 0,
\]
for every effective curve class $\beta_D$ in $D$. Let $\beta_v$ be a curve class associated to a vertex $v$ in $\Gamma^0$, we must have
\[
D_0\cdot \beta_v-D_\infty\cdot \beta_v=\int_{\pi_*(\beta_v)}c_1(N_{D/X})\geq 0,
\]
where $\pi:P\rightarrow D$ is the projection map.
Therefore, the nefness of $D$ implies that, for each $\bM^\sim_{\Gamma_i^0}(D)$, the relative insertion at $D_0$ can not be empty. Hence, at least one of the positive contact markings on the LHS of (\ref{identity-several-neg-1-0}) must be distributed to $\bM^\sim_{\Gamma_i^0}(D)$. Since there is only one positive contact marking, there can only be one rubber moduli, denoted by $\bM^\sim_{\Gamma^0}(D)$. Therefore, all the negative contact markings, as well as the positive contact marking, are distributed to $\bM^\sim_{\Gamma^0}(D)$. The invariant can be written as
\[
\sum_{\mathfrak G\in \mathcal B_\Gamma}\frac{\prod_{e\in E}d_e}{|\Aut(E)|}\sum_\eta \langle \check{\eta}\rangle^{\bullet,\mathfrak c_\Gamma, (X,D)}_{\Gamma^\infty}\langle \eta,[1]_{k_1},\cdots, [1]_{k_l},| \, | [\gamma]_{k_{l+1}} \rangle^{\sim,\mathfrak c_{\Gamma}}_{\Gamma^0}, 
\]
where the superscript $\mathfrak c_\Gamma$ means capping with the class $\mathfrak c_\Gamma(X/D)$ in Definition \ref{def-rel-cycle}.

Let 
\[
p: \bM_{\Gamma^0}^\sim(P,D_0\cup D_\infty)\rightarrow \bM_{0,m}(D,\pi_*(\beta_1))
\]
be the natural projection of the rubber map to $X$ and contracting the resulting unstable components. By \cite{JPPZ18}*{Theorem 2}, we have
\[
p_*[\bM_{\Gamma^0}^\sim(P,D_0\cup D_\infty)]^{\on{vir}}=[\bM_{0,m}(D,\pi_*(\beta_1))]^{\on{vir}},
\]
where $\pi:P\rightarrow D$ is the projection map.

Note that there are $l$ identity classes $[1]$ and the degree of the class $\mathfrak c_\Gamma(X/D)$ is less or equal to $l-1$. We can apply the string equation $l$-times. Then the invariant 
\[
\langle \eta,[1]_{k_1},\cdots, [1]_{k_l},| \, | [\gamma]_{k_{l+1}} \rangle^{\sim,\mathfrak c_{\Gamma}}_{\Gamma^0}
\]
vanishes unless $\pi_*(\beta)=0$ and $\eta$ contains exactly one element. Moreover, $\eta$ needs to be $[\check{\gamma}]_{D\cdot \beta}$ and $\check{\eta}$ needs to be $[\gamma]_{D\cdot \beta}$. Therefore,
\[
\langle \check{\eta}\rangle^{\bullet,\mathfrak c_\Gamma, (X,D)}_{\Gamma^\infty}=\langle [\gamma]_{D\cdot \beta}\rangle_{0,1,\beta}^{(X,D)}.
\]
There is only one edge, hence
\[
\frac{\prod_{e\in E}d_e}{|\Aut(E)|}=D\cdot \beta.
\]

It remains to compute 
\[
\langle [\check{\gamma}]_{D\cdot \beta},[1]_{k_1},\cdots, [1]_{k_l},| \, | [\gamma]_{k_{l+1}} \rangle^{\sim,\mathfrak c_{\Gamma}}_{\Gamma^0}
\]
with $\pi_*(\beta)=0$. This is the same as the rubber invariant with the base being a point. Set $d=D\cdot \beta$. We claim that it coincides with the following relative Gromov--Witten invariants of $(\mathbb P^1,0\cup\infty)$ with negative contact orders 
\begin{align}\label{rel-inv-P-1}
\langle [1]_{d}, [1]_{-k_1},\cdots, [1]_{-k_l}, | \, | [1]_{k_{l+1}}\rangle_{0,l+2,d}^{(\mathbb P^1,0\cup\infty)}.
\end{align}
This is because one can run the above computation of the LHS of (\ref{identity-several-neg-1-0}) to the invariant (\ref{rel-inv-P-1}), we see that (\ref{rel-inv-P-1}) equals to
\[
d\langle [1]_d | \, | [1]_d \rangle_{0,l+2,d}^{(\mathbb P^1,0\cup\infty)} \langle [\check{\gamma}]_{D\cdot \beta},[1]_{k_1},\cdots, [1]_{k_l},| \, | [\gamma]_{k_{l+1}} \rangle^{\sim,\mathfrak c_{\Gamma}}_{\Gamma^0}.
\]
It is straightforward to compute that
\[
\langle [1]_d | \, | [1]_d \rangle_{0,l+2,d}^{(\mathbb P^1,0\cup\infty)}=\frac{1}{d}.
\]
This proves the claim that 
\[
\langle [\check{\gamma}]_{D\cdot \beta},[1]_{k_1},\cdots, [1]_{k_l},| \, | [\gamma]_{k_{l+1}} \rangle^{\sim,\mathfrak c_{\Gamma}}_{\Gamma^0}
\]
with $\pi_*(\beta)=0$ equals to (\ref{rel-inv-P-1}). The invariant (\ref{rel-inv-P-1}) has already been computed in \cite{KW}*{Proposition B.2}:
\[
\langle [1]_{d}, [1]_{-k_1},\cdots, [1]_{-k_l}, | \, | [1]_{k_{l+1}}\rangle_{0,l+2,d}^{(\mathbb P^1,0\cup\infty)}=d^{l-1}.
\]

Therefore the LHS of (\ref{identity-several-neg-1-0}) is

\[
(D\cdot \beta)^l \langle [\gamma]_{D\cdot \beta}\rangle_{0,1,\beta}^{(X,D)}=\langle [D]_0,\cdots, [D]_0, [\gamma]_{D\cdot \beta} \rangle_{0,l+1,\beta}^{(X,D)}
\]
by the divisor equation. 


\textbf{ The base case II: $l=1$.}

Then the LHS of (\ref{identity-several-neg-1-0}) is a relative invariant with one negative contact order. Similar to the proof of Proposition \ref{prop-struc-const}, the invariant is of the form,
\begin{align*}
\sum_{\mathfrak G\in \mathcal B_\Gamma}\frac{\prod_{e\in E}d_e}{|\Aut(E)|}\sum \langle [\gamma]_{k_2}\psi^a |\,|[1]_{k_1},\eta\rangle^{\sim}_{\Gamma^0} \langle  \check{\eta}\rangle^{\bullet, (X,D)}_{\Gamma_\infty}.
\end{align*}

For the RHS of (\ref{identity-several-neg-1-0}), we consider the degeneration to the normal cone of $D$ and apply the degeneration formula. After applying the rigidification lemma \cite{MP}*{Lemma 2}, the invariant is of the form
\begin{align*}
\sum_{\mathfrak G\in \mathcal B_\Gamma}\frac{\prod_{e\in E}d_e}{|\Aut(E)|}\sum \langle [\gamma]_{k_2-k_1}\psi^a |\,|[1]_{0},\eta\rangle^{\sim}_{\Gamma^0} \langle  \check{\eta}\rangle^{\bullet, (X,D)}_{\Gamma_\infty}.
\end{align*}
The only difference between the LHS of (\ref{identity-several-neg-1-0}) and the RHS of (\ref{identity-several-neg-1-0}) is the contact orders of two markings (contact orders $k_2$ and $k_1$ for the LHS and contact orders $k_2-k_1$ and $0$ for the RHS) for the rubber invariants. We pushforward the rubber moduli spaces to the moduli space $\overline{M}(D)$ of stable maps to $D$. Since the genus zero double ramification cycle is trivial, it does not depend on the contact orders. Therefore, the LHS of (\ref{identity-several-neg-1-0}) and the RHS of (\ref{identity-several-neg-1-0}) are the same.

\textbf{Induction:}

Now we use the induction to prove the case when $a>0$ and $l>1$. Suppose Identity (\ref{identity-several-neg-1}) is true when $a=N\geq 0$. When $a=N+1$, we apply  the topological recursion relation
\begin{align*}
&\langle [1]_{-k_1},\cdots, [1]_{-k_l}, [\gamma]_{k_{l+1}} \bar{\psi}^{N+1}\rangle_{0,l+1,\beta}^{(X,D)}\\
=&\sum \langle [\gamma]_{k_{l+1}} \bar{\psi}^{N}, \prod_{j\in S_1}[1]_{-k_j},\tilde {T}_{i,k}\rangle_{0,\beta_1,2+|S_1|}^{(X,D)} \langle \tilde {T}_{-i}^k, \prod_{j\in S_2}[1]_{-k_j},[1]_{-k_1},[1]_{-k_2} \rangle_{0,\beta_2,3+|S_2|}^{(X,D)},
\end{align*}
where the sum is over all $\beta_1+\beta_2=\beta$, all indices $i,k$ of basis and $S_1, S_2$ disjoint sets with $S_1\cup S_2=\{3,\ldots,l\}$. The nefness of the divisor $D$ implies that 
\[
\tilde {T}_{-i}^k=[\alpha]_{b} \text{ and } \tilde {T}_{i,k}=[\check{\alpha}]_{-b},
\]
for some positive integer $b\geq k_1$. 

Note that 
\[
\langle [\alpha]_{b}, \prod_{j\in S_2}[1]_{-k_j}, [1]_{-k_1},[1]_{-k_2} \rangle_{0,\beta_2,3+|S_2|}^{(X,D)}=\langle [\alpha]_{b-k_1-k_2},\prod_{j\in S_2}[1]_{-k_j}, [D]_0,[D]_0 \rangle_{0,\beta_2,3+|S_2|}^{(X,D)}
\]
follows from the base case. 

On the  other hand, we have
\[
\langle [\gamma]_{k_{l+1}} \bar{\psi}^{N}, \prod_{j\in S_1}[1]_{-k_j}, [\check{\alpha}]_{-b}\rangle_{0,\beta_1,2+|S_1|}^{(X,D)}=\langle [\gamma]_{k_{l+1}-k_1} \bar{\psi}^{N}, \prod_{j\in S_1}[1]_{-k_j}, [\check{\alpha}]_{-b+k_1}\rangle_{0,\beta_1,2+|S_1|}^{(X,D)}.
\]
This is because, for these invariants, the graph sum in the definition of the relative invariants only has one rubber space and all the markings are in the rubber space. Moreover, the class $\mathfrak c_\Gamma$ does not depend on the value of $k_{l+1}$ and $b$. Therefore, we have the identity.

Therefore, we have
\begin{align*}
    &\sum \langle [\gamma]_{k_{l+1}} \bar{\psi}^{N}, \prod_{j\in S_1}[1]_{-k_j}, \tilde {T}_{i,k}\rangle_{0,\beta_1,2+|S_1|}^{(X,D)} \langle \tilde {T}_{-i}^k, \prod_{j\in S_2}[1]_{-k_j}, [1]_{-k_1},[1]_{-k_2} \rangle_{0,\beta_2,3+|S_2|}^{(X,D)}\\
    =&\sum \langle [\gamma]_{k_{l+1}-k_1} \bar{\psi}^{N}, \prod_{j\in S_1}[1]_{-k_j}, [\check{\alpha}]_{-b+k_1}\rangle_{0,\beta_1,2+|S_1|}^{(X,D)} \langle [\alpha]_{b-k_1},\prod_{j\in S_2}[1]_{-k_j}, [D]_0,[D]_0 \rangle_{0,\beta_2,3+|S_2|}^{(X,D)}\\
    =&\langle [D]_0, [D]_0, \prod_{j\in\{3,\ldots,l\}}[1]_{-k_j},[\gamma]_{k_{l+1}-k_1} \bar{\psi}^{N+1}\rangle_{0,3,\beta}^{(X,D)},\\
\end{align*}
where the third line is the topological recursion relation. We have the identity
\begin{align*}
\langle [1]_{-k_1},\cdots, [1]_{-k_l}, [\gamma]_{k_{l+1}} \bar{\psi}^{N+1}\rangle_{0,l+1,\beta}^{(X,D)}
=\langle [D]_0, [D]_0,\prod_{j\in\{3,\ldots,l\}}[1]_{-k_j}, [\gamma]_{D\cdot \beta-k_{1}} \bar{\psi}^{N+1}\rangle_{0,l+1,\beta}^{(X,D)}.
\end{align*}

Run the above argument multiple times to trade markings with negative contact orders with markings with insertion $[D]_0$. We end up with either one negative contact order or no negative contact order. The former case is the base case II: $l=1$, the latter case is just Identity (\ref{identity-several-neg-1}).

\if{
When $a>0$ and $l>1$, we use induction (on $a$ and $l$) and apply the topological recursion relation for relative Gromov--Witten theory in \cite{FWY}. Suppose the identity (\ref{identity-several-neg-1}) is true for $a=N$. By the topological recursion, the LHS of (\ref{identity-several-neg-1}) when $a=N+1$ is
\[
\sum \langle  [\gamma]_{k_{l+1}} \bar{\psi}^{N}, \prod_{j\in S_1}[1]_{-k_j}, \tilde {T}_{i,k}\rangle_{0,\beta_1,1+|S_1|}^{(X,D)} \langle \tilde {T}_{-i}^k, [1]_{-k_1},[1]_{-k_2},\prod_{j\in S_2}[1]_{-k_j} \rangle_{0,\beta_2,2+|S_2|}^{(X,D)}, 
\]
where the sum is over all $\beta_1+\beta_2=\beta$, all indices $i,k$ of basis, and $S_1, S_2$ disjoint sets with $S_1\cup S_2=\{3,\ldots,l\}$. We first consider
\[
\langle \tilde {T}_{-i}^k, [1]_{-k_1},[1]_{-k_2},\prod_{j\in S_2}[1]_{-k_j} \rangle_{0,\beta_2,2+|S_2|}^{(X,D)}.
\]
The nefness of the divisor $D$ implies that
\[
\tilde {T}_{-i}^k=[\alpha]_{b}
\]
for some positive integer $b$.
}\fi

\end{proof}

The identity will be slightly different if we add an insertion of $[1]_1$. For the purpose of this paper, we only consider the case when there are no descendant classes. There is also a (more complicated) identity for descendant invariants, but we do not plan to discuss it here. 

\begin{proposition}\label{prop-several-neg-2}
Given a curve class $\beta$, Let $k_i\in \mathbb Z_{>0}$ for $i\in \{1,\ldots, l+1\}$ such that
\[
D\cdot \beta-1=k_{l+1}-\sum_{i=1}^l k_i\geq 0.
\]
Then we have the following relation.
\begin{align}\label{identity-several-neg-2}
\langle [1]_1, [1]_{-k_1},\cdots, [1]_{-k_l}, [\gamma]_{k_{l+1}} \rangle_{0,l+2,\beta}^{(X,D)}=(D\cdot \beta-1)^l\langle [1]_1, [\gamma]_{D\cdot \beta-1} \rangle_{0,2,\beta}^{(X,D)},
\end{align}
where $\gamma \in H^*(D)$.
\end{proposition}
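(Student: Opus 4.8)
The plan is to follow the graph-sum computation in the proof of Proposition \ref{prop-several-neg-1} for the case without descendants, the only new ingredient being the extra positive-contact insertion $[1]_1$. Since there is no descendant, there is no induction on a $\bar\psi$-power; instead one bookkeeping argument handles all $l$ at once, together with a short separate treatment of the base case $l=1$.

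First I would expand the left-hand side via the definition of Section \ref{sec-rel-general} as a sum over admissible bipartite graphs $\mathfrak G\in\mathcal B_\Gamma$. Exactly as in Proposition \ref{prop-several-neg-1}, the nefness of $D$ gives $D_0\cdot\beta_v\ge D_\infty\cdot\beta_v$ at every rubber vertex $v$, so each type-$0$ rubber component must carry a positive-contact insertion on its $D_0$-side. Since the only positive-contact markings are $[1]_1$ and $[\gamma]_{k_{l+1}}$, there are at most two rubber components, and the heart of the argument is to show that only one configuration contributes: all $l$ negative markings together with $[\gamma]_{k_{l+1}}$ sit on a single rubber component $\Gamma^0$, while $[1]_1$ is distributed to the main component $\Gamma^\infty$. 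This is where the ``subtle but vital'' difference with the one-positive-contact computation (the Remark after Proposition \ref{prop-several-neg-1}) enters: any graph with $[1]_1$ on a rubber component, or with two rubber components, must vanish. I would check this by pushing the rubber factors to $\overline{M}(D)$ using the triviality of the genus-zero double ramification cycle \cite{JPPZ18}*{Theorem 2} and applying the string equation to the insertions $[1]_{k_i}$ and $[1]_1$, all of which restrict to $1\in H^*(D)$; since $\deg\mathfrak c_\Gamma\le l-1$, the same dimension count and repeated use of the string equation as in Proposition \ref{prop-several-neg-1} kills these contributions.

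For the surviving configuration I would repeat the no-descendant analysis of Proposition \ref{prop-several-neg-1}: pushing $\overline{M}^\sim_{\Gamma^0}(D)$ to $\overline{M}_{0,m}(D,\pi_*\beta_0)$ and applying the string equation forces $\pi_*\beta_0=0$ and a single gluing edge $e$; imposing $\pi_*\beta_0=0$ then balances the contact orders on $\Gamma^0$ and pins the edge weight to $d_e=k_{l+1}-\sum_i k_i=D\cdot\beta-1$ --- one less than in Proposition \ref{prop-several-neg-1}, precisely because $[1]_1$ absorbs one unit of contact order on $\Gamma^\infty$. Consequently $\Gamma^\infty$ contributes $\langle [1]_1,[\gamma]_{D\cdot\beta-1}\rangle_{0,2,\beta}^{(X,D)}$ (the cohomology weight at $e$ being forced to be $\gamma$ by dimension), the edge multiplicity $\prod_e d_e/|\Aut(E)|$ contributes the factor $D\cdot\beta-1$, and the residual rubber-over-a-point integral is identified, exactly as in Proposition \ref{prop-several-neg-1}, with a multiple-negative-contact relative invariant of $(\mathbb{P}^1,0\cup\infty)$ of the type evaluated in \cite{KW}*{Proposition B.2}, giving $(D\cdot\beta-1)^{l-1}$. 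Multiplying the three factors yields $(D\cdot\beta-1)^l\langle [1]_1,[\gamma]_{D\cdot\beta-1}\rangle_{0,2,\beta}^{(X,D)}$, which is the assertion.

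For the base case $l=1$ there is a single negative contact, so the simplified graph sum for one negative contact order applies with $\mathfrak c_\Gamma$ trivial, and the argument runs as in the proof of Proposition \ref{prop-struc-const} (or the $l=1$ base case of Proposition \ref{prop-several-neg-1}), with the placement of $[1]_1$ determined as above. I expect the main obstacle to be precisely the vanishing of the unwanted graph configurations in the general case: this is the step that genuinely differs from the one-positive-contact situation, and an error in the placement of $[1]_1$ or in the count of rubber components would change the combinatorial factor from $(D\cdot\beta-1)^l$ to $(D\cdot\beta)^l$.
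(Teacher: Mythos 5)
Your proposal matches the paper's proof: both expand the left-hand side as the graph sum of Section \ref{sec-rel-general}, use the nefness of $D$ together with string-equation/dimension arguments to show that the only contributing configuration places $[1]_1$ on $\Gamma^\infty$ and all negative markings plus $[\gamma]_{k_{l+1}}$ on a single rubber component, and then run the computation of Proposition \ref{prop-several-neg-1} with the gluing edge weight shifted to $D\cdot\beta-1$, which produces the factor $(D\cdot\beta-1)^l$ in place of $(D\cdot\beta)^l$. The paper states this more tersely (deferring the surviving-configuration evaluation entirely to the earlier proposition), but the substance and the key subtlety you isolate are identical.
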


\begin{proof}
The proof is similar to the proof of Proposition \ref{prop-several-neg-1}. We first consider the LHS of (\ref{identity-several-neg-2}). By definition, every negative contact marking must be in a rubber moduli $\bM^\sim_{\Gamma_i^0}(D)$ labelled by $\Gamma_i^0$ and each rubber moduli $\bM^\sim_{\Gamma_i^0}(D)$ has at least one negative contact marking distributed to it. Similar to the proof of Proposition \ref{prop-several-neg-1}, the nefness of $D$ implies that the last marking (with insertion $[\gamma]_{k_{l+1}}$) has to be distributed to the rubber space.  

Now we examine the first marking. Since the contact order of the first marking is $1$, the nefness of $D$ implies that the first marking and the last marking can not be in different rubber space. On the other hand, if the first marking and the last marking (both with positive contact orders) are in the same rubber space, then we claim the invariant vanishes. This is because, after pushing forward to $\overline{M}_{g,n}(D, \pi_*(\beta))$, there are $(l+1)$-identity class $1$ and the degree of the class $\mathfrak c_\Gamma$ is $l-1$. Applying the string equation $(l+1)$-times implies that the invariant is zero. 
Therefore, there is only one rubber moduli, labelled by $\Gamma^0$, and the first marking can not be distributed to the rubber moduli. The LHS of (\ref{identity-several-neg-2}) is of the following form
\[
\sum_{\mathfrak G\in \mathcal B_\Gamma}\frac{\prod_{e\in E}d_e}{|\Aut(E)|}\sum_\eta \langle [1]_1,\check{\eta}\rangle^{\bullet,\mathfrak c_\Gamma, (X,D)}_{\Gamma^\infty}\langle \eta,[1]_{k_1},\cdots, [1]_{k_l},| \, | [\gamma]_{k_{l+1}} \rangle^{\sim,\mathfrak c_{\Gamma}}_{\Gamma^0}. 
\]

Then the rest of the proof follows from the proof of Proposition \ref{prop-several-neg-1} that the Equation (\ref{identity-several-neg-2}) holds. Note that the contact order of the unique marking in $\eta$ is $D\cdot \beta-1$ instead of $D\cdot \beta$. Therefore, we have the factor $(D\cdot \beta-1)^l$ instead of $(D\cdot \beta)^l$.

\end{proof}

One can add more insertions of $[1]_1$, then we have a similar identity as follows.
\begin{prop}\label{prop-several-neg-3}
Given a curve class $\beta$, Let $k_i\in \mathbb Z_{>0}$ for $i\in \{1,\ldots, l+1\}$ such that
\[
D\cdot \beta-a=k_{l+1}-\sum_{i=1}^l k_i\geq 0.
\]
Then we have the following relation.
\begin{align}\label{identity-several-neg-3}
\langle [1]_1, \ldots, [1]_1, [1]_{-k_1},\cdots, [1]_{-k_l}, [\gamma]_{k_{l+1}} \rangle_{0,a+l+1,\beta}^{(X,D)}=(D\cdot \beta-a)^l\langle [1]_1,\ldots, [1]_1, [\gamma]_{D\cdot \beta-a} \rangle_{0,a+1,\beta}^{(X,D)},
\end{align}
where $\gamma \in H^*(D)$.
\end{prop}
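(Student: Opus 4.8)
The plan is to follow the same strategy as in Propositions \ref{prop-several-neg-1} and \ref{prop-several-neg-2}, combining a structural analysis of the admissible bipartite graphs with an induction on the number $a$ of $[1]_1$-insertions. First I would analyze the LHS of \eqref{identity-several-neg-3} directly from the graph-sum definition in Section \ref{sec-rel-general}. As before, every negative-contact marking must be distributed to some rubber moduli $\bM^\sim_{\Gamma_i^0}(D)$, and the nefness of $D$ forces each such rubber piece to contain at least one positive-contact marking at $D_0$. The key point is that, among the positive-contact markings, the last one (carrying $[\gamma]_{k_{l+1}}$) is the only one with contact order large enough to ``pay for'' all the negative contacts in a rubber component; each $[1]_1$ has contact order $1$, which is too small once $D\cdot\beta - a \geq 0$ with $a\geq 1$ to independently support a rubber component carrying negative contacts. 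So either all the $[1]_1$'s sit on the $\Gamma^\infty$ side, giving a single rubber component $\Gamma^0$ carrying $[\gamma]_{k_{l+1}}$ and all the $[1]_{-k_j}$'s, or some $[1]_1$'s are pushed into the rubber along with $[\gamma]_{k_{l+1}}$ — and in the latter case, after pushing forward to $\bM_{0,m}(D,\pi_*\beta)$ via \cite{JPPZ18}*{Theorem 2}, one has $a + l$ identity classes and a class $\mathfrak c_\Gamma$ of degree at most $l-1$, so $a+1$ applications of the string equation kill the contribution (using $a\geq 1$). Hence the LHS reduces to
\[
\sum_{\mathfrak G\in \mathcal B_\Gamma}\frac{\prod_{e\in E}d_e}{|\Aut(E)|}\sum_\eta \langle [1]_1,\ldots,[1]_1,\check{\eta}\rangle^{\bullet,\mathfrak c_\Gamma,(X,D)}_{\Gamma^\infty}\langle \eta,[1]_{k_1},\cdots,[1]_{k_l},|\,|[\gamma]_{k_{l+1}}\rangle^{\sim,\mathfrak c_{\Gamma}}_{\Gamma^0}.
\]

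Next I would run exactly the computation from the base case of Proposition \ref{prop-several-neg-1}: push the rubber factor forward to $\bM_{0,m}(D,\pi_*\beta_1)$, apply the string equation $l$ times, and conclude that the rubber invariant vanishes unless $\pi_*(\beta_1)=0$, $\eta$ consists of a single element, and that element has contact order $k_{l+1}-\sum k_j = D\cdot\beta - a$ — forcing $\eta = [\check\gamma]_{D\cdot\beta-a}$ and $\check\eta = [\gamma]_{D\cdot\beta-a}$. The rubber integral with point base is then identified, via comparison with $(\mathbb P^1, 0\cup\infty)$ exactly as before, with the $\mathbb P^1$-invariant computed in \cite{KW}*{Proposition B.2}, yielding the factor $(D\cdot\beta - a)^{l-1}$; the single edge contributes $\prod_{e\in E}d_e/|\Aut(E)| = D\cdot\beta - a$; and the $\Gamma^\infty$ factor becomes $\langle [1]_1,\ldots,[1]_1,[\gamma]_{D\cdot\beta-a}\rangle_{0,a+1,\beta}^{(X,D)}$. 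Multiplying gives the factor $(D\cdot\beta-a)^l$ and establishes \eqref{identity-several-neg-3} directly, without needing an induction on $l$ at all — the single-rubber structure makes the argument essentially identical to the $a=1$ case of Proposition \ref{prop-several-neg-2}, with $1$ replaced by $a$ throughout.

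The main obstacle, and the place that needs the most care, is the vanishing claim when one or more $[1]_1$-insertions are pushed into the rubber component together with $[\gamma]_{k_{l+1}}$. One must check that the degree bound $\deg \mathfrak c_\Gamma \leq l-1$ (coming from the number of negative-contact markings) together with the $a+l$ identity classes genuinely leaves enough string-equation applications to force vanishing; this uses $a\geq 1$ essentially, since for $a=0$ the statement is Proposition \ref{prop-several-neg-1} and the mechanism is slightly different (there the rubber component legitimately carries $[\gamma]$ alone). I would also need to double-check the virtual dimension bookkeeping to ensure $[\gamma]_{D\cdot\beta-a}$ is forced with no room for descendant or higher-codimension insertions in $\eta$, and to confirm that the hypothesis $D\cdot\beta - a = k_{l+1}-\sum k_j \geq 0$ makes $D\cdot\beta - a$ a legitimate nonnegative contact order so that the right-hand side two-or-$(a{+}1)$-point invariant is well-defined (when $D\cdot\beta-a=0$ both sides are understood appropriately, with the rubber degenerating and the factor $(D\cdot\beta-a)^l$ vanishing unless $l=0$).
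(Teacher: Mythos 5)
Your proof is correct and takes essentially the same route the paper intends: its own proof of Proposition \ref{prop-several-neg-3} is a one-line deferral to the arguments of Propositions \ref{prop-several-neg-1} and \ref{prop-several-neg-2}, and your write-up is a faithful expansion of exactly that argument with the single $[1]_1$-insertion replaced by $a$ of them (nefness forcing a single rubber component carrying $[\gamma]_{k_{l+1}}$ and all negative contacts, the string-equation vanishing when any $[1]_1$ lands in the rubber, and the $(D\cdot\beta-a)^{l-1}$ rubber integral from \cite{KW} times the edge factor $D\cdot\beta-a$). Your observations that no induction on $l$ is needed and that the $a\ge 1$ vanishing mechanism mirrors the proof of Proposition \ref{prop-several-neg-2} are both consistent with the paper.
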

\begin{proof}
The proof is similar to the proof of Proposition \ref{prop-several-neg-1} and Proposition \ref{prop-several-neg-2}. We do not repeat the details.
\end{proof}

\subsection{Degree zero relative invariants}\label{sec-deg-0}

The following invariants will also appear in the $J$-function when plugging in the mirror map:
\[
\langle [1]_1, [1]_{-k_1},\cdots, [1]_{-k_l}, [\gamma]_{k_{l+1}} \rangle_{0,l+2,\beta}^{(X,D)}, \text{ with } D\cdot \beta=0.
\]
We again apply the definition of relative Gromov--Witten invariants with negative contact orders in Section \ref{sec-rel-general} and then pushforward the rubber moduli to $\overline{M}_{0,l+2}(D,\pi_*(\beta))$. Then applying the string equation $(l+1)$-times, then the invariants vanish unless $\beta=0$. 

When $l=1$, the invariants have two markings with positive contact orders and one marking with negative contact order. By direct computation, we have
\[
\langle [1]_1,[1]_{-k_1},[\on{pt}]_{k_{2}}\rangle_{0,3,0}^{(X,D)}=1.    
\]

Therefore, We still need to compute degree zero, genus zero relative invariants with two positive contacts and several negative contacts. By the definition of relative invariants with negative contact orders in Section \ref{sec-rel-general}, the bipartite graphs simplifies to a single vertex of type $0$ and the moduli space is simply the product $\overline{M}_{0,n}\times D$. 

We will have the following result.
\begin{prop}\label{prop-degree-zero}
\begin{align}\label{identity-zero-several-neg}
\langle [1]_1,[1]_{-k_1},\cdots, [1]_{-k_l},[\on{pt}]_{k_{l+1}}\rangle_{0,l+2,0}^{(X,D)}=(-1)^{l-1},    
\end{align}
where $k_1, \ldots, k_l$ are positive integers and
\[
1+k_{l+1}=k_1+\cdots +k_l.
\]

\end{prop}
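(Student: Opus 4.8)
The plan is to compute these degree-zero invariants directly from the orbifold/graph-sum definition of relative invariants with negative contact orders recalled in Section \ref{sec-rel-general}, following the strategy already used for Proposition \ref{prop-several-neg-1} and Proposition \ref{prop-several-neg-2}. Since $\beta = 0$, all stability forces the bipartite graph $\mathfrak G$ to degenerate to a single type-$0$ vertex with no edges, so there is no $\Gamma^\infty$ component and $\overline{\mathcal M}_{\mathfrak G} = \overline{M}_{0,l+2} \times D$ with trivial obstruction theory; in particular the rubber $\Psi_\infty$-classes and the class $\mathfrak c_\Gamma(X/D)$ become honest descendant classes on $\overline{M}_{0,l+2}$. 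The evaluation of the invariant thus reduces to an intersection number on $\overline{M}_{0,l+2}$: after accounting for the factors $c(l)$ in $C_{\Gamma_i^0}(t)$ from \eqref{neg-rel-0} — which, because $\beta=0$, simplify to powers of $\bar\psi$-classes at the $\infty$-roots times the weights $d_{e_j}=k_j$ — the point insertion $[\on{pt}]_{k_{l+1}}$ pins the image in $D$, and what remains is a $\psi$-class integral over $\overline{M}_{0,l+2}$ with one marking carrying $[1]_1$, one carrying the point, and $l$ markings at the $\infty$-roots carrying the appropriate cotangent-line weights coming from the expansion of \eqref{neg-rel-0} in $t$.

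First I would set up the $l=1$ base case explicitly: here $\overline{M}_{0,3}$ is a point, $\mathfrak c_\Gamma$ is trivial, and the computation gives $\langle [1]_1, [1]_{-k_1}, [\on{pt}]_{k_2}\rangle_{0,3,0}^{(X,D)} = 1 = (-1)^0$, as already noted in the text preceding the statement. Then I would proceed by induction on $l$, using the WDVV equation of Proposition \ref{prop:WDVV} for relative Gromov--Witten theory. Applying WDVV to the five-pointed configuration obtained by singling out $[1]_1$, $[\on{pt}]_{k_{l+1}}$, and two of the negative insertions $[1]_{-k_i}, [1]_{-k_j}$ (plus the remaining $[1]_{-k_m}$'s as the "spectator" markings), and noting that in degree zero the curve class cannot split nontrivially, one side collapses to a product of a degree-zero $4$-point term with a degree-zero $(l+1)$-point term, and the other side to a product involving an $(l+1)$-point term. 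The degree-zero $4$-point invariants that appear can be evaluated directly — these are essentially intersection numbers on $\overline{M}_{0,4} = \mathbb P^1$ with one $[1]_1$, one point, and two relative (orbifold) markings — and the dimension constraint together with the string/dilaton-type relations on $\overline{M}_{0,n}$ forces the recursion to take the shape $(\text{new } l\text{-invariant}) = -(\text{old }(l-1)\text{-invariant})$, yielding the sign $(-1)^{l-1}$.

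Alternatively — and this may be the cleaner route — I would avoid WDVV and instead compute the $\overline{M}_{0,l+2}$ intersection number head-on: after the reduction above, the constant term in $t$ of the product $\prod C_{\Gamma_i^0}(t)$ against $[\overline{M}_{0,l+2}\times D]$, with the point insertion integrated over $D$, is a multinomial-type sum of genus-zero $\psi$-integrals $\int_{\overline{M}_{0,l+2}} \psi_1^{a_1}\cdots$, and these are governed by the standard formula $\int_{\overline{M}_{0,n}} \psi_1^{a_1}\cdots\psi_n^{a_n} = \binom{n-3}{a_1,\dots,a_n}$. Carrying out the $t$-expansion bookkeeping, the weights $k_1,\dots,k_l$ cancel against the denominators $\bigl(\frac{t+\ev_e^*D}{d_e} - \bar\psi_e\bigr)$ in \eqref{neg-rel-0} (here $\ev_e^*D$ contributes nothing extra since $\beta=0$ makes $D\cdot\beta=0$), and the surviving combinatorial sum telescopes to $(-1)^{l-1}$ independently of the $k_i$. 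The main obstacle I anticipate is precisely this $t$-expansion bookkeeping in \eqref{neg-rel-0}–\eqref{eqn:cg}: one must track the powers $t^{\rho_\infty(i)-1-l}$, the classes $c(l) = \Psi_\infty^l - \Psi_\infty^{l-1}\sigma_1 + \cdots$, and the denominator expansions carefully enough to see the cancellation of all dependence on the individual $k_i$ and the emergence of the clean sign — essentially a careful but elementary residue/generating-function computation, which is why I would present it as a short lemma-style calculation rather than grinding it inline. I expect the WDVV induction to be the more robust of the two approaches and would lead with it, relegating the direct computation to a remark if space permits.
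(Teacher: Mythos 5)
There is a genuine gap, and it is in the route you say you would lead with. The WDVV induction cannot work here because in degree zero the WDVV equation of Proposition \ref{prop:WDVV} is vacuous for these insertions: every summand on both sides vanishes for dimension reasons. Concretely, the original invariant $\langle [1]_1,[1]_{-k_1},\dots,[1]_{-k_l},[\on{pt}]_{k_{l+1}}\rangle_{0,l+2,0}$ has $\rho_+=2$, so by the dimension formula its cycle lives in dimension $\dim_{\mathbb C}X-1$, which exactly equals the total cohomological degree of the insertions. Any WDVV splitting inserts a diagonal pair $\widetilde T_{i,k}\otimes\widetilde T^k_{-i}$ of total degree $\dim_{\mathbb C}X-1$ (relative sectors) or $\dim_{\mathbb C}X$ (interior sector), while the two degree-zero factors together have dimension $2(\dim_{\mathbb C}X-3)+3=2\dim_{\mathbb C}X-3$ (one extra positive contact from the diagonal) or $2(\dim_{\mathbb C}X-3)+2+2=2\dim_{\mathbb C}X-2$ (two extra interior markings); in either case the total insertion degree exceeds the available dimension by exactly one. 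Contrast this with Proposition \ref{prop-wdvv} in the paper, where WDVV is applied to a four-point configuration with $\rho_+=3$ whose insertion degree is one less than the virtual dimension, so that the splittings are dimensionally balanced. Your proposed configuration does not have this property, and the asserted recursion ``new $l$-invariant $=-(\text{old }(l-1)\text{-invariant})$'' never materializes: WDVV reads $0=0$. (For the specific arrangement singling out $[1]_1$, $[\on{pt}]_{k_{l+1}}$ and two negative markings, one natural labelling is even symmetric under exchanging the two negative markings, so the identity is trivially uninformative.)

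Your fallback — unwinding the graph-sum definition on $\bM_{0,l+2}\times D$ and computing the $t^0$-coefficient of $C_{\Gamma^0}(t)$ against $\psi$-integrals — is where all the actual content of the statement lives, and it is not carried out; the claim that $\ev_e^*D$ drops out ``since $\beta=0$ makes $D\cdot\beta=0$'' conflates the intersection number $D\cdot\beta$ with the pullback of a degree-two class on $D$ (the latter does eventually die, but only against the point insertion, and only after one checks the degree bookkeeping in $c(l)$ and $\sigma_k$). The paper avoids this computation entirely and proceeds by a different mechanism: it uses the orbifold definition of negative-contact invariants as twisted invariants of $B\mathbb Z_r$ (equivalently, degree-zero invariants of the root stack $X_{D,r}$) with the factor $r^{\rho_-}$, writes down the $S$-extended $I$-function with $S=\{1,-k_1,\dots,-k_l\}$, extracts the coefficient of $x_1\prod x_{-k_i}z^{-1}$ on both sides of the mirror theorem, and runs an induction on $l$ in which the binomial theorem produces the sign $(-1)^{l-1}$. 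If you want a proof along your lines you must either complete the intersection-theoretic computation honestly or switch to the orbifold/mirror-theorem argument; the WDVV shortcut is not available.
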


These invariants can be computed from the $I$-function. The $I$-function is a limit of the $I$-function for the twisted invariants of $B\mathbb Z_r$. The $I$-function is also the restriction of the $I$-function for $(X,D)$ to the degree zero case.

Before computing the corresponding orbifold invariants, we briefly recall the relative-orbifold correspondence of \cite{FWY}: 
\[
r^{m_-}\langle \prod_{i=1}^m \tau_{a_i}(\gamma_i)\rangle_{0,m,\beta}^{X_{D,r}}=\langle \prod_{i=1}^m \tau_{a_i}(\gamma_i)\rangle_{0,m,\beta}^{(X,D)},
\]
where there are $m$ markings in total and $m_-$ of them are markings with negative contact orders; $a_i\in \mathbb Z_{\geq 0}$; $\gamma_i$ are cohomology classes of $X$ (or $D$) when the marking is interior (or relative/orbifold, respectively). We would like to point out that it is important to keep in mind the factor $r^{m_-}$.

We need to consider the $S$-extended $I$-function for the twisted Gromov--Witten invariants of $B\mathbb Z_r$ for sufficiently large $r$, with extended data
\[
S=\{1,-k_1,\ldots,-k_l\}.
\]

The $I$-function is
\begin{align}\label{I-func-BZ}
z\sum \frac{x_1^{a}\prod_{i=1}^l x_{-k_i}^{a_{i}}}{z^{a+\sum_{i=1}^l a_i}a!\prod_{i=1}^l a_i!}\frac{\prod_{b\leq 0, \langle b\rangle=\langle -\frac{1}{r}-\sum_{i=1}^l (a_i(1-\frac{k_i}{r}))\rangle} (bz) }{\prod_{b\leq -\frac{1}{r}-\sum_{i=1}^l (a_i(1-\frac{k_i}{r})), \langle b\rangle=\langle -\frac{1}{r}-\sum_{i=1}^l (a_i(1-\frac{k_i}{r}))\rangle} (bz) } [1]_{\langle a-\sum_{i=1}^l \frac{a_ik_i}{r}\rangle},
\end{align}
where $\langle b \rangle$ is the fractional part of the rational number $b$.

The orbifold mirror map, the $z^0$-coefficient of the $I$-function, is
\[
x_1[1]_{\frac{1}{r}}+\sum_{\{i_1,\ldots, i_j\}\subset \{1,\ldots, l\}} x_{-k_{i_1}}\cdots x_{-k_{i_j}} \prod_{b=0}^{j-1}\left(\frac{ k_{i_1}+\cdots+ k_{i_j}}{r}-b\right)[1]_{-\frac{k_{i_1}+\cdots+ k_{i_j}}{r}}.
\]

Since the expression of the $I$-function and the mirror map looks quite complicated for general $l$. We would like to first start with the computation for the case when $l=2$ for a better explanation of the idea.

\subsubsection{Computation for $l=2$}

When $l=2$, the $I$-function becomes 

\[
z\sum \frac{x_1^{a} x_{-k_1}^{a_{1}}x_{-k_2}^{a_{2}}}{z^{a+a_1+a_2}a! a_1!a_2!}\frac{\prod_{b\leq 0, \langle b\rangle=\langle -\frac{1}{r}-\sum_{i=1}^2 a_i(1-\frac{k_i}{r}))\rangle} (bz) }{\prod_{b\leq -\frac{1}{r}-\sum_{i=1}^2 (a_i(1-\frac{k_i}{r})), \langle b\rangle=\langle -\frac{1}{r}-\sum_{i=1}^2 (a_i(1-\frac{k_i}{r}))\rangle} (bz) } [1]_{\langle a-\sum_{i=1}^l \frac{a_ik_i}{r}\rangle}.
\]
The orbifold mirror map is
\[
x_1[1]_{\frac{1}{r}}+x_{-k_1}[1]_{1-\frac{k_1}{r}}+x_{-k_2}[1]_{1-\frac{k_2}{r}}+x_{-k_1}x_{-k_2}\left(\frac{k_1+k_2}{r}-1\right)[1]_{-\frac{k_1+k_2}{r}}.
\]

We would like to take the coefficient of $x_1x_{-k_1}x_{-k_2}z^{-1}[1]_{-\frac{k_1+k_2-1}{r}}$ of the $I$-function and the $J$-function. The coefficient of the $I$-function is
\[
\frac{k_1+k_2-1}{r}-1.
\]

By the mirror theorem, this coefficient of the $I$-function coincides with the coefficient of the $J$-function:
\begin{align*}
&\langle [1]_{\frac{1}{r}}, [1]_{1-\frac{k_1}{r}}, [1]_{1-\frac{k_2}{r}}, r[1]_{\frac{k_1+k_2-1)}{r}}\rangle_{0,4}^{B \mathbb Z_r, tw}\\
+&\left(\frac{k_1+k_2}{r}-1\right)\langle [1]_{\frac{1}{r}}, [1]_{1-\frac{k_1+k_2}{r}}, r[1]_{\frac{k_1+k_2-1}{r}}\rangle_{0,3}^{B \mathbb Z_r, tw}. 
\end{align*}

Note that the invariant
\[
\langle [1]_{\frac{1}{r}}, [1]_{1-\frac{k_1+k_2}{r}}, r[1]_{\frac{k_1+k_2-1}{r}}\rangle_{0,3}^{B \mathbb Z_r, tw}
\]
coincides with degree zero relative invariant with one negative contact order and the value of the invariants is $1$ by direct computation. Therefore, we have
\[
\frac{k_1+k_2-1}{r}-1=\langle [1]_{\frac{1}{r}}, [1]_{1-\frac{k_1}{r}}, [1]_{1-\frac{k_2}{r}}, r[1]_{\frac{k_1+k_2-1}{r}}\rangle_{0,4}^{B \mathbb Z_r, tw}+\left(\frac{k_1+k_2}{r}-1\right).
\]
Hence, we have
\[
r\langle [1]_{\frac{1}{r}}, [1]_{1-\frac{k_1}{r}}, [1]_{1-\frac{k_2}{r}}, [1]_{\frac{k_1+k_2-1}{r}}\rangle_{0,4}^{B \mathbb Z_r, tw}=-\frac{1}{r}.
\]
We conclude that, the degree zero relative invariant with two negative contact orders is
\begin{align*}
&\langle [1]_1,[1]_{-k_1}, [1]_{-k_2},[\on{pt}]_{k_1+k_2-1}\rangle^{(X,D)}_{0,4,0}\\
=& r^2\langle [1]_{\frac{1}{r}}, [1]_{1-\frac{k_1}{r}}, [1]_{1-\frac{k_2}{r}}, [1]_{\frac{k_1+k_2-1}{r}}\rangle_{0,4}^{B \mathbb Z_r, tw}\\
=&-1.
\end{align*}

\subsubsection{Computation for general $l$}

\begin{proof}[Proof of Proposition \ref{prop-degree-zero}]
We proceed with induction on $l$. Suppose Identity (\ref{identity-zero-several-neg}) is true for $l=N>0$. For $l=N+1$, extracting the coefficient $x_1\prod_{i=1}^{N+1} x_{-k_i}z^{-1}[1]_{-1-\frac{\sum_{i=1}^{N+1} k_i}{r}}$ of the $I$-function (\ref{I-func-BZ}), we have 
\[
\prod_{b=1}^{N} \left( \frac{-1+\sum_{i=1}^{N+1} k_i }{r}-b\right).
\]
The corresponding coefficient of the $J$-function is
\begin{align*}
&\langle [1]_{\frac{1}{r}}, \prod_{i=1}^{N+1} [1]_{1-\frac{k_i}{r}},  r[1]_{\frac{-1+\sum_{i=1}^{N+1}}{r}}\rangle_{0,N+3}^{B \mathbb Z_r, tw}\\
+&\sum_{\{i_1,i_2\}\subset \{1,\ldots,N+1\}}\left(\frac{k_{i_1}+k_{i_2}}{r}-1\right)\langle [1]_{\frac 1 r}, \prod_{i\in\{1,\ldots, N+1\}\setminus \{i_1,i_2\} }[1]_{k_i}, [1]_{1-\frac{k_{i_1}+k_{i_2}}{r}}, r[1]_{\frac{-1+\sum_{i=1}^{N+1} k_i}{r}}\rangle_{0,N+2}^{B \mathbb Z_r, tw}\\
+& \cdots\\
+& \prod_{b=1}^{N} \left( \frac{\sum_{i=1}^{N+1} k_i }{r}-b\right)\langle [1]_{\frac{1}{r}}, [1]_{1-\frac{\sum_{i=1}^{N+1}k_i}{r}},  r[1]_{\frac{-1+\sum_{i=1}^{N+1} k_i}{r}}\rangle_{0,3}^{B \mathbb Z_r, tw}.    
\end{align*}

We further multiply both coefficients of the $I$-function and the $J$-function by $r^{-N}$, then take the constant coefficient (which is the coefficient of the lowest power of $r$). This coefficient of the $I$-function is
\[
\left( -1+\sum_{i=1}^{N+1} k_i \right)^N.
\]
Apply the induction on $l$, the coefficient of the $J$-function is
\begin{align*}
& r^{N+1}\langle [1]_{\frac{1}{r}}, \prod_{i=1}^{N+1} [1]_{1-\frac{k_i}{r}},  [1]_{\frac{-1+\sum_{i=1}^{N+1}}{r}}\rangle_{0,N+3}^{B \mathbb Z_r, tw}\\
+& \sum_{\{i_1,i_2\}\subset \{1,\ldots,N+1\}}\left(k_{i_1}+k_{i_2}\right)(-1)^{N-1}\\
+& \sum_{\{i_1,i_2,i_3\}\subset \{1,\ldots,N+1\}}\left(k_{i_1}+k_{i_2}+k_{i_3}\right)(-1)^{N-2}\\
+& \cdots\\
+& \left( \sum_{i=1}^{N+1} k_i \right)^N.
\end{align*}
The coefficient of the $J$-function can be simplified to
\begin{align*}
& r^{N+1}\langle [1]_{\frac{1}{r}}, \prod_{i=1}^{N+1} [1]_{1-\frac{k_i}{r}},  [1]_{\frac{-1+\sum_{i=1}^{N+1}}{r}}\rangle_{0,N+3}^{B \mathbb Z_r, tw}\\
+& N\left(\sum_{i=1}^N k_i\right)(-1)^{N-1}+ {N\choose 2} \left(\sum_{i=1}^N k_i\right)(-1)^{N-2}+\cdots
+ \left( \sum_{i=1}^{N+1} k_i \right)^N.
\end{align*}
Therefore, the identity of the (coefficients of the) $I$-function and the $J$-function is
\begin{align*}
\left( -1+\sum_{i=1}^{N+1} k_i \right)^N=& r^{N+1}\langle [1]_{\frac{1}{r}}, \prod_{i=1}^{N+1} [1]_{1-\frac{k_i}{r}},  [1]_{\frac{-1+\sum_{i=1}^{N+1}k_i}{r}}\rangle_{0,N+3}^{B \mathbb Z_r, tw}\\
+& N\left(\sum_{i=1}^N k_i\right)(-1)^{N-1}+ {N\choose 2} \left(\sum_{i=1}^N k_i\right)(-1)^{N-2}+\cdots
+ \left( \sum_{i=1}^{N+1} k_i \right)^N.
\end{align*}
The binomial theorem implies that
\[
(-1)^N=r^{N+1}\langle [1]_{\frac{1}{r}}, \prod_{i=1}^{N+1} [1]_{1-\frac{k_i}{r}},  [1]_{\frac{-1+\sum_{i=1}^{N+1}}{r}}\rangle_{0,N+3}^{B \mathbb Z_r, tw}
\]
The orbifold definition of the relative Gromov--Witten invariants with negative contact orders implies Identity (\ref{identity-zero-several-neg}).
\end{proof}



\subsection{Relative mirror map}\label{sec-rel-mirror-map}

We recall that the extended relative mirror map (\ref{rel-mirror-map}) is
\begin{align*}
\tau(y,x_1)=\sum_{i=1}^r p_i\log y_i+x_1[1]_{1}+\sum_{\substack{\beta\in \on{NE}(X)\\ D\cdot \beta \geq 2}}\langle [\on{pt}]\psi^{D\cdot \beta-2}\rangle_{0,1,\beta}^Xy^\beta (D\cdot \beta-1)![1]_{-D\cdot \beta}.
\end{align*}

Let
\[
g(y)=\sum_{\substack{\beta\in \on{NE}(X)\\ D\cdot \beta \geq 2}}\langle [\on{pt}]\psi^{D\cdot \beta-2}\rangle_{0,1,\beta}^Xy^\beta (D\cdot \beta-1)!.
\]
Let $\iota: D\hookrightarrow X$ be the inclusion map and $\iota_!:H^*(D)\rightarrow H^*(X)$ be the Gysin pushforward. Recall that
\[
\mathfrak H:=\bigoplus_{i\in \mathbb Z}\mathfrak H_i,
\]
where $\mathfrak H_0=H^*(X)$ and $\mathfrak H_i=H^*(D)$ for $i\neq 0$.
We also denote $\iota_!:\mathfrak H\rightarrow H^*(X)$ for the map such that it is the identity map for the identity sector $\mathfrak H_0$ and is the Gysin pushforward for twisted sectors $\mathfrak H_i$. We first let $x_1=0$, 
then
\[
\iota!J_{(X,D)}(\tau(y),z)=e^{(\sum_{i=1}^r p_i\log y_i+g(y)D)/z}\left(z+\sum_{ \beta\in \on{NE(X)}}\sum_{\alpha}y^{\beta}e^{g(y)(D\cdot \beta)}\left\langle \frac{\phi_\alpha}{z-\bar{\psi}}\right\rangle_{0,1, \beta}^{(X,D)}D\cup \phi^{\alpha}\right).
\]

This has the same effect with the change of variables
\begin{align}\label{relative-mirror-map}
\sum_{i=1}^r p_i\log q_i=\sum_{i=1}^r p_i\log y_i+g(y)D,
\end{align}
or,
\[
q^\beta=e^{g(y)D\cdot \beta}y^\beta.
\]
We will also refer to the change of variables (\ref{relative-mirror-map}) as the \emph{relative mirror map}. In particular, relative mirror map coincides with the local mirror map of $\mathcal O_X(-D)$ after a change of variables $y\mapsto -y$. 
\if{
Then we write the $J$-function $J(\tau(-y),z)$ as
\[
e^{(\sum_{i=1}^r p_i\log q_i)/z}\left(z+\sum_{\substack{(\beta,l)\neq (0,0)\\ \beta\in \on{NE(X)}}}\sum_{\alpha}\frac{(-q)^{\beta}e^{-lg(-y(q))}}{l!}\left\langle \frac{\phi_\alpha}{z-\psi},[1]_1,\ldots, [1]_1\right\rangle_{0,1+l, \beta}^{X}x^l\phi^{\alpha}\right),
\]
where
\[
q^\beta=e^{g(-y)D\cdot \beta}y^\beta.
\]
}\fi

When $x_1\neq 0$, we will be able to compute invariants with more than one positive contact order. We will consider it in the following section.

\section{Theta function computation via relative mirror theorem}

\begin{theorem}\label{thm-main}
Let $X$ be a smooth projective variety with a smooth nef anticanonical divisor $D$. Let $W:=\vartheta_1$ be the mirror proper Landau--Ginzburg potential. Set $q^\beta=t^{D\cdot \beta}x^{D\cdot\beta}$. Then
\[
W=x^{-1}\exp\left(g(y(q))\right),
\]
where
\[
g(y)=\sum_{\substack{\beta\in \on{NE}(X)\\ D\cdot \beta \geq 2}}\langle [\on{pt}]\psi^{D\cdot \beta-2}\rangle_{0,1,\beta}^Xy^\beta (D\cdot \beta-1)!
\]
and $y=y(q)$ is the inverse of the relative mirror map (\ref{relative-mirror-map}).

\end{theorem}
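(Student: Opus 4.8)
The plan is to reduce the theorem to a single power series identity for the two‑point invariants $N^{\beta}_{n,1}$, and then to establish it by extracting one graded coefficient of the extended relative mirror theorem $J_{(X,D)}(\tau(y,x_1),z)=I^{S}_{(X,D)}(y,x_1,z)$ (Theorem~\ref{thm:rel-mirror-extended}), using Propositions~\ref{prop-several-neg-2} and \ref{prop-degree-zero} to remove the negative contact orders that the relative mirror map introduces.

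First I would unwind Definition~\ref{def-theta-func}. The contact–order constraint forces $N^{\beta}_{n,1}=\langle[\on{pt}]_n,[1]_1\rangle^{(X,D)}_{0,2,\beta}=0$ unless $D\cdot\beta=n+1$ (and with $D=-K_X$ the virtual dimension then matches the point insertion), so after the substitution $q^{\beta}=t^{D\cdot\beta}x^{D\cdot\beta}$ the series $\vartheta_1$ collapses to
\[
\vartheta_1=x^{-1}\Big(1+\sum_{\beta\,:\,D\cdot\beta\ge 2}(D\cdot\beta-1)\,N^{\beta}_{D\cdot\beta-1,1}\,q^{\beta}\Big).
\]
Thus the theorem amounts to the identity $1+\sum_\beta(D\cdot\beta-1)N^{\beta}_{D\cdot\beta-1,1}q^\beta=\exp(g(y(q)))$, with $q^\beta=e^{g(y)D\cdot\beta}y^\beta$ the relative mirror map~\eqref{relative-mirror-map}. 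The weight $(D\cdot\beta-1)$ — intrinsic to the theta function — is what the introduction means by saying $\vartheta_1$ is a \emph{derivative} of a coefficient of the $J$‑function: it is produced by differentiating $q^\beta$ against the shift by $1$ coming from the single insertion $[1]_1$.

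Next I would compute $N^{\beta}_{m,1}$: it is the $z^{-1}$‑coefficient of the $x_1$‑linear part of the $[1]_{-m}$‑component of $J_{(X,D)}(\tau(y,x_1),z)$, where $[\on{pt}]_m$ is dual to $[1]_{-m}$ and $x_1$ is the extended variable attached to $[1]_1$. Substituting the extended mirror map $\tau(y,x_1)=\tau_{0,2}+x_1[1]_1+T$, with $T=\sum_{\beta'}\langle[\on{pt}]\psi^{D\cdot\beta'-2}\rangle^{X}_{0,1,\beta'}(D\cdot\beta'-1)!\,y^{\beta'}[1]_{-D\cdot\beta'}$, expands this as a sum over $l\ge 0$ of $\tfrac1{l!}\langle[\on{pt}]_m/(z-\bar\psi),[1]_1,[1]_{-D\cdot\beta'_1},\dots,[1]_{-D\cdot\beta'_l}\rangle$ weighted by the coefficients of $T$, the $\tau_{0,2}$‑insertions being absorbed by the divisor equation into an overall $e^{\tau_{0,2}/z}$. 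Discarding $\bar\psi$, Proposition~\ref{prop-several-neg-2} reduces the positive‑degree terms to $(D\cdot\beta-1)^{l}N^{\beta}_{D\cdot\beta-1,1}$, and Proposition~\ref{prop-degree-zero} reduces the genuinely degree‑zero terms to $(-1)^{l-1}$; since the sum of the $T$‑coefficients over the $\beta'_i$ is $g(y)^l$, these factors together with $1/l!$ resum into exponentials, exactly as in the $x_1=0$ computation of Section~\ref{sec-rel-mirror-map}. On the other side, by Theorem~\ref{thm:rel-mirror-extended} the same component of $I^{S}_{(X,D)}(y,x_1,z)$ is explicit: only the $k=1$ term of $I_+$ contributes, its hypergeometric factor telescopes because $D+(D\cdot\beta-1)z$ is one of the factors of $\prod_{0<a\le D\cdot\beta}(D+az)$, and extracting the point‑class coefficient on $D$ from $J_{X,\beta}(\tau_{0,2},z)$ picks out precisely $\langle[\on{pt}]\psi^{D\cdot\beta-2}\rangle^{X}_{0,1,\beta}$, i.e. the coefficients of $g$. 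Equating the two sides and performing the change of variables $q^\beta=e^{g(y)D\cdot\beta}y^\beta$ (inverting the relative mirror map) then turns the resummed $J$‑side into the identity of the previous paragraph, whence $\vartheta_1=x^{-1}\exp(g(y(q)))$.

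The conceptual skeleton is short; the main obstacle is the combinatorial bookkeeping in the last step — matching the $z$‑expansion of the $I$‑function hypergeometric factor against the descendant structure of the relative $J$‑function, and resumming the series over the number $l$ of negative‑contact insertions so that the two‑positive‑contact terms (carrying $(D\cdot\beta-1)^{l}$, from Proposition~\ref{prop-several-neg-2}) and the degree‑zero terms (carrying $(-1)^{l-1}$, from Proposition~\ref{prop-degree-zero}) exponentiate and combine correctly with the mirror‑map substitution. As the Remark after Proposition~\ref{prop-degree-zero} warns, it is exactly the subtle difference between the two‑positive‑contact and the one‑positive‑contact formulas — in particular the $-1$ in $(D\cdot\beta-1)$ versus $D\cdot\beta$ — that makes $\exp(g)$, and not some other function of $g$, come out; getting this normalization right is the crux.
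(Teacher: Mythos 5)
Your overall strategy is exactly the paper's: reduce to the power-series identity $1+\sum_\beta (D\cdot\beta-1)N^\beta_{D\cdot\beta-1,1}q^\beta=\exp(g(y(q)))$, take the $S=\{1\}$ extended relative mirror theorem, extract the $x_1z^{-1}$-coefficient valued in the twisted sectors $[1]_{-n}$, use Proposition \ref{prop-several-neg-2} for the positive-degree terms and Proposition \ref{prop-degree-zero} for the degree-zero terms, resum the $l$ negative-contact insertions into $\exp(-g(y))$, and match against the $k=1$ term of $I_+$. Up to that point your plan is sound and coincides with Sections 5.1--5.2 of the paper.

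There is, however, a genuine gap at the final step. The direct coefficient extraction you describe produces, on the $J$-side, $\exp(-g(y))\bigl(\sum_\beta N^\beta_{D\cdot\beta-1,1}q^\beta+1\bigr)-\exp(-g(y))+\dots$ \emph{without} the weight $D\cdot\beta-1$, and on the $I$-side the coefficient is $\sum_\beta\langle[\on{pt}]\psi^{n-1}\rangle^X_{0,1,\beta}y^\beta\,\tfrac{(n+1)!}{n}$ with $n=D\cdot\beta-1$ --- note the $1/n$, so this is not ``the coefficients of $g$'' as you assert, but $(n+1)/n$ times them. Equating these two expressions and inverting the mirror map does \emph{not} give the target identity; it gives a different (true but insufficient) relation. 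You gesture at the missing ingredient when you say the weight ``is produced by differentiating $q^\beta$,'' but your detailed plan never performs that differentiation, and your closing claim that ``equating the two sides and performing the change of variables then turns the resummed $J$-side into the identity'' is false as stated. The paper bridges this by applying the operator $\Delta_D=\sum_i m_iy_i\partial_{y_i}-1$ (where $D=\sum m_ip_i$) to \emph{both} sides before matching. This is not bookkeeping: on the $J$-side one must push $\Delta_D$ through the mirror map via the chain rule, which yields
\[
\sum_{i} m_iy_i\sum_j\frac{\partial q_j}{\partial y_i}\frac{\partial}{\partial q_j}
=\Bigl(1+\sum_i m_iy_i\frac{\partial g}{\partial y_i}\Bigr)\sum_j m_jq_j\frac{\partial}{\partial q_j},
\]
and the prefactor $\bigl(1+\sum_i m_iy_i\partial g/\partial y_i\bigr)$ must be recognized as equal to $1$ plus the $\Delta_D$-image of the $I$-side, so that it cancels identically and leaves $1=\exp(-g(y))\bigl(\sum_\beta n\,N^\beta_{n,1}q^\beta+1\bigr)$. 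Without this step --- which occupies the bulk of the paper's Sections 5.1--5.3 --- the factor $n$ in the theta function is never produced. I would ask you to carry out the $\Delta_D$ computation explicitly before considering the proof complete.
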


We will prove Theorem \ref{thm-main} in this section through the mirror theorem for the smooth pair $(X,D)$ proved in \cite{FTY}. 

For the purpose of the computation of the theta function:
\[
x\vartheta_1=1+\sum_{n=1}^\infty\sum_{\beta: D\cdot \beta=n+1}n\langle [1]_1,[\on{pt}]_{n}\rangle_{0,\beta,2}^{(X,D)}q^\beta,
\]
we will only consider the $S$-extended $I$-function with
\[
S=\{1\}.
\]
Recall that the $S$-extended $I$-function of $(X,D)$ is defined as follows: 
\[
I_{(X,D)}^{S}(y,x_1,z)=I_++I_-,
\]
where
\begin{align*}
I_+:=&\sum_{\substack{\beta\in \on{NE}(X),k\in \mathbb Z_{\geq 0}\\  k<D\cdot \beta} }J_{X, \beta}(\tau_{0,2},z)y^{\beta}\frac{ x_1^{k}}{z^{k}k!}\frac{\prod_{0<a\leq D\cdot \beta}(D+az)}{D+(D\cdot \beta-k)z}[{1}]_{-D\cdot \beta+k},
\end{align*}
and 
\begin{align*}
I_-:=&\sum_{\substack{\beta\in \on{NE}(X),k\in \mathbb Z_{\geq 0}\\ k\geq D\cdot \beta} }J_{X, \beta}(\tau_{0,2},z)y^{\beta}\frac{ x_1^{k}}{z^{k}k!}\left(\prod_{0<a\leq D\cdot \beta}(D+az)\right)[{ 1}]_{-D\cdot \beta+k}.
\end{align*}

\subsection{Extracting the coefficient of the $J$-function}
We consider the $J$-function 
\[
J(\tau(y,x_1),z),
\]
where
\[
J_{(X,D)}(\tau,z)=z+\tau+\sum_{\substack{(\beta,l)\neq (0,0), (0,1)\\ \beta\in \on{NE(X)}}}\sum_{\alpha}\frac{q^{\beta}}{l!}\left\langle \frac{\phi_\alpha}{z-\bar{\psi}},\tau,\ldots, \tau\right\rangle_{0,1+l, \beta}^{(X,D)}\phi^{\alpha},
\]
and
\begin{align*}
\tau(y,x_1)=\sum_{i=1}^r p_i\log y_i+x_1[1]_{1}+\sum_{\substack{\beta\in \on{NE}(X)\\ D\cdot \beta \geq 2}}\langle [\on{pt}]\psi^{D\cdot \beta-2}\rangle_{0,1,\beta}^Xy^\beta (D\cdot \beta-1)![1]_{-D\cdot \beta}.
\end{align*}
\if{
\[
e^{(\sum_{i=1}^r p_i\log q_i)/z}\left(z+x[1]_1+\sum_{\substack{(\beta,l)\neq (0,0), (0,1)\\ \beta\in \on{NE(X)}}}\sum_{\alpha}\frac{q^{\beta}e^{-lg(y(q))}}{l!}\left\langle \frac{\phi_\alpha}{z-\psi},[1]_1,\ldots, [1]_1\right\rangle_{0,1+l, \beta}^{X}x^l\phi^{\alpha}\right).
\]
}\fi

The sum over the coefficient of $x_1z^{-1}$ of $J(\tau(y,x_1),z)$ that takes value in $[1]_{-n}$, for $n\geq 1$, is the following
\begin{align}\label{coeff-J}
[J(\tau(y,x_1),z)]_{x_1z^{-1}}=& \sum_{\beta: D\cdot \beta\geq 1,n\geq 1}\langle [1]_1,\tau(y),\cdots, \tau(y),[\on{pt}]_{n}\rangle_{0,\beta,k+2}^{(X,D)}q^\beta \\
\notag & +\sum_{\beta: D\cdot \beta=0}\sum_{n\geq 1, k>0}\langle[1]_1,\tau(y),\cdots,\tau(y),[\on{pt}]_n \rangle_{0,\beta,k+2}^{(X,D)}. 
\end{align}

By Proposition \ref{prop-several-neg-2}, we have
\begin{align}\label{identity-deg-geq-1}
& \sum_{\beta: D\cdot \beta \geq 1, n\geq 1}\langle [1]_1,\tau(y),\cdots, \tau(y),[\on{pt}]_{n}\rangle_{0,\beta,k+2}^{(X,D)}q^\beta\\
\notag = &\exp\left(-g(y)\right)\sum_{\beta: D\cdot \beta=n+1,n\geq 1}\langle [1]_1,[\on{pt}]_{n}\rangle_{0,\beta,2}^{(X,D)}q^\beta,
\end{align}
where
\[
g(y)=\sum_{\substack{\beta\in \on{NE}(X)\\ D\cdot \beta \geq 2}}\langle [\on{pt}]\psi^{D\cdot \beta-2}\rangle_{0,1,\beta}^Xy^\beta (D\cdot \beta-1)!
\]
and
\[
q^\beta=e^{g(y)D\cdot \beta}y^\beta.
\]

When $D\cdot \beta=0$, the invariants are studied in Section \ref{sec-deg-0}. As mentioned at the beginning of Section \ref{sec-deg-0}, we need to have $\beta=0$. The degree zero invariants are computed in Proposition \ref{prop-degree-zero}. Therefore, we have
\begin{align}\label{identity-deg-0}
&\sum_{\beta: D\cdot \beta=0}\sum_{n\geq 1, k>0}\langle[1]_1,\tau(y),\cdots,\tau(y),[\on{pt}]_n \rangle_{0,\beta,k+2}^{(X,D)}\\
\notag =& g(y)+\sum_{l\geq 2} g(y)^l(-1)^{l-1}\\
\notag =& -\exp\left(-g(y)\right)+1.
\end{align}

Therefore, (\ref{identity-deg-geq-1}) and (\ref{identity-deg-0}) imply that (\ref{coeff-J}) is
\begin{align}\label{coeff-J-1}
& [J(\tau(y,x_1),z)]_{x_1z^{-1}}\\
\notag =& \exp\left(-g(y)\right)\sum_{\beta: D\cdot \beta=n+1,n\geq 1}\langle [1]_1,[\on{pt}]_{n}\rangle_{0,\beta,2}^{(X,D)}q^\beta  -\exp\left(-g(y)\right)+1. 
\end{align}

Note that (\ref{coeff-J-1}) is not exactly the generating function of relative invariants in the theta function $\vartheta_1$. We want to compute $\sum_{\beta: D\cdot \beta=n+1,n\geq 1}n\langle [1]_1,[\on{pt}]_{n}\rangle_{0,\beta,2}^{(X,D)}q^\beta$ instead of $\sum_{\beta: D\cdot \beta=n+1,n\geq 1}\langle [1]_1,[\on{pt}]_{n}\rangle_{0,\beta,2}^{(X,D)}q^\beta$. 

Write 
\[
D=\sum_{i=1}^r m_ip_i
\]
for some $m_i\in \mathbb Z_{\geq 0}$.
In order to compute $\vartheta_1$, we apply the operator $\Delta_D=\sum_{i=1}^r m_iy_i\frac{\partial}{\partial y_i}-1$ to the $J$-function $J(\tau(y,x_1),z)$. Then (\ref{coeff-J-1}) becomes
\begin{align}\label{coeff-J-der}
    & \left(-\sum_{i=1}^r m_iy_i\frac{\partial (g(y))}{\partial y_i}\right)\exp\left(-g(y)\right)\sum_{\beta: D\cdot \beta=n+1}\langle [1]_1,[\on{pt}]_{n}\rangle_{0,\beta,2}^{(X,D)}q^\beta \\
\notag    +& \exp\left(-g(y)\right)\sum_{i=1}^r m_iy_i\sum_{j=1}^r\frac{\partial q_j}{\partial y_i}\frac{\partial}{\partial q_j}\sum_{\beta: D\cdot \beta=n+1}\langle [1]_1,[\on{pt}]_{n}\rangle_{0,\beta,2}^{(X,D)}q^\beta \\
\notag    -& \exp\left(-g(y)\right)\sum_{\beta: D\cdot \beta=n+1}\langle [1]_1,[\on{pt}]_{n}\rangle_{0,\beta,2}^{(X,D)}q^\beta+ \left(1+\sum_{i=1}^r m_iy_i\frac{\partial (g(-y))}{\partial y_i}\right)\exp(-g(y))-1.
    \end{align}

We compute the partial derivatives
\[
\frac{\partial q_j}{\partial y_i}=\left\{
\begin{array}{cc}
    y_je^{m_jg(y)}m_j\frac{\partial g(y)}{\partial y_i} & j\neq i; \\
    e^{m_j g(y)}+y_je^{m_jg(y)}m_j\frac{\partial g(y)}{\partial y_j}  & j=i.
\end{array}
\right.
\]
Therefore,
\begin{align*}
&\sum_{i=1}^r m_iy_i\sum_{j=1}^r\frac{\partial q_j}{\partial y_i}\frac{\partial}{\partial q_j}\\
=&\sum_{j=1}^r \left(m_jy_j\left(e^{m_j g(y)}+y_je^{m_jg(y)}m_j\frac{\partial g(y)}{\partial y_j}\right)+m_iy_i\sum_{j\neq i} y_je^{m_jg(y)}m_j\frac{\partial g(y)}{\partial y_i} \right)\frac{\partial}{\partial q_j}\\
=&\sum_{j=1}^r\left(1+\sum_{i=1}^r m_iy_i\frac{\partial g(y)}{\partial y_i}\right)m_jq_j\frac{\partial}{\partial q_j}\\
=&\left(1+\sum_{i=1}^r m_iy_i\frac{\partial g(y)}{\partial y_i}\right)\sum_{j=1}^rm_jq_j\frac{\partial}{\partial q_j}.
\end{align*}

Hence, (\ref{coeff-J-der}) is

    \begin{align*}
    &\left(-\sum_{i=1}^r m_iy_i\frac{\partial (g(y))}{\partial y_i}\right)\exp\left(-g(y)\right)\sum_{\beta: D\cdot \beta=n+1}\langle [1]_1,[\on{pt}]_{n}\rangle_{0,\beta,2}^{(X,D)}q^\beta \\
    & +\exp\left(-g(y)\right)\left(1+\sum_{i=1}^r m_iy_i\frac{\partial g(y)}{\partial y_i}\right)\sum_{j=1}^rm_jq_j\frac{\partial}{\partial q_j}\sum_{\beta: D\cdot \beta=n+1}\langle [1]_1,[\on{pt}]_{n}\rangle_{0,\beta,2}^{(X,D)}q^\beta \\
    & -\exp\left(-g(y)\right)\sum_{\beta: D\cdot \beta=n+1}\langle [1]_1,[\on{pt}]_{n}\rangle_{0,\beta,2}^{(X,D)}q^\beta + \left(1+\sum_{i=1}^r m_iy_i\frac{\partial (g(-y))}{\partial y_i}\right)\exp(-g(y))-1\\
    = &\left(-1-\sum_{i=1}^r m_iy_i\frac{\partial (g(y))}{\partial y_i}\right)\exp\left(-g(y)\right)\left(\sum_{\beta: D\cdot \beta=n+1}\langle [1]_1,[\on{pt}]_{n}\rangle_{0,\beta,2}^{(X,D)}q^\beta \right)\\
    & +\exp\left(-g(y)\right)\left(1+\sum_{i=1}^r m_iy_i\frac{\partial g(y)}{\partial y_i}\right)\left(\sum_{j=1}^r\sum_{\beta: D\cdot \beta=n+1}(n+1)\langle [1]_1,[\on{pt}]_{n}\rangle_{0,\beta,2}^{(X,D)}q^\beta +1\right)-1\\
    =& \left(1+\sum_{i=1}^r m_iy_i\frac{\partial (g(y))}{\partial y_i}\right)\exp\left(-g(y)\right)\left(\sum_{\beta: D\cdot \beta=n+1}n\langle [1]_1,[\on{pt}]_{n}\rangle_{0,\beta,2}^{(X,D)}q^\beta +1\right)-1.
\end{align*}

\subsection{Extracting the coefficient of the $I$-function}

Recall that, when $\beta=0$, we have
\[
J_{X, 0}(\tau_{0,2},z)=z.
\]
When $\beta\neq 0$, we have
\[
J_{X, \beta}(\tau_{0,2},z)=e^{\tau_{0,2}/z}\sum_\alpha\left\langle\psi^{m-2}\phi_\alpha\right\rangle_{0,1,\beta}^Xy^\beta\phi^\alpha\left(\frac{1}{z}\right)^{m-1}
\]
and 
\[
m=\dim_{\mathbb C} X+D\cdot \beta-\deg (\phi_\alpha)\geq D\cdot\beta.
\]

The $I$-function can be expanded as
\[
I=z+x_1[1]_1 +\tau_{0,2}+\sum_{\substack{\beta\in \on{NE}(X)\\ D\cdot \beta \geq 2}}\langle [\on{pt}]\psi^{D\cdot \beta-2}\rangle_{0,1,\beta}^Xy^\beta (D\cdot \beta-1)![1]_{-D\cdot \beta}+\sum_{k=1}^{\infty}I_{-k}z^{-k}.
\]

We would like to sum over the coefficient of $x_1z^{-1}$ of the $I$-function that takes value in $[1]_{-n}$ for $n\geq 1$. By direct computation, the sum of the coefficients is 
\begin{align}\label{coeff-I}
[I(y,z)]_{x_1z^{-1}}=\sum_{\beta: D\cdot \beta =n+1, n\geq 1}\langle [\on{pt}]\psi^{n-1}\rangle_{0,1,\beta}^Xy^\beta \frac{(n+1)!}{n}.
\end{align}

We apply the operator 
\[
\Delta_D=\sum_{i=1}^r m_iy_i\frac{\partial}{\partial y_i}-1
\]
to the $I$-function $I(y,z)$. Then (\ref{coeff-I}) becomes 
\[
\sum_{\beta: D\cdot \beta =n+1, n\geq 1}\langle [\on{pt}]\psi^{n-1}\rangle_{0,1,\beta}^X(y)^\beta (n+1)!.
\]

\subsection{Matching}
The relative mirror theorem of \cite{FTY} states that the coefficients of the $I$-function and the $J$-function are the same. Therefore, we have
\begin{align}\label{identity-coeff-I-J}
& \sum_{\beta: D\cdot \beta =n+1, n\geq 1}\langle [\on{pt}]\psi^{n-1}\rangle_{0,1,\beta}^X(y)^\beta (n+1)!\\
\notag = & \left(1+\sum_{i=1}^r m_iy_i\frac{\partial (g(y))}{\partial y_i}\right)\exp\left(-g(y)\right)\left(\sum_{\beta: D\cdot \beta=n+1}n\langle [1]_1,[\on{pt}]_{n}\rangle_{0,\beta,2}^{(X,D)}q^\beta +1\right)-1.
\end{align}
Recall that
\[
g(y)=\sum_{\substack{\beta\in \on{NE}(X)\\ D\cdot \beta \geq 2}}\langle [\on{pt}]\psi^{D\cdot \beta-2}\rangle_{0,1,\beta}^Xy^\beta (D\cdot \beta-1)!.
\]
Therefore 
\begin{align*}
&\sum_{i=1}^r m_iy_i\frac{\partial (g(y))}{\partial y_i}\\
=&\sum_{\substack{\beta\in \on{NE}(X)\\ D\cdot \beta \geq 2}}\langle [\on{pt}]\psi^{D\cdot \beta-2}\rangle_{0,1,\beta}^Xy^\beta (D\cdot \beta)!\\
=& \sum_{\beta: D\cdot \beta =n+1, n\geq 1}\langle [\on{pt}]\psi^{n-1}\rangle_{0,1,\beta}^X(y)^\beta (n+1)!
\end{align*}

we conclude that (\ref{identity-coeff-I-J}) becomes 
\begin{align*}
& 1+ \sum_{\beta: D\cdot \beta =n+1, n\geq 1}\langle [\on{pt}]\psi^{n-1}\rangle_{0,1,\beta}^X(y)^\beta (n+1)!\\
\notag = & \left(1+\sum_{\beta: D\cdot \beta =n+1, n\geq 1}\langle [\on{pt}]\psi^{n-1}\rangle_{0,1,\beta}^X(y)^\beta (n+1)!\right)\exp\left(-g(y)\right)\left(\sum_{\beta: D\cdot \beta=n+1}n\langle [1]_1,[\on{pt}]_{n}\rangle_{0,\beta,2}^{(X,D)}q^\beta +1\right).
\end{align*}
Therefore
\[
1=\exp\left(-g(y)\right)\left(\sum_{\beta: D\cdot \beta=n+1}n\langle [1]_1,[\on{pt}]_{n}\rangle_{0,\beta,2}^{(X,D)}q^\beta +1\right).
\]
We have
\begin{align}
    1+\sum_{\beta: D\cdot \beta=n+1}n\langle [1]_1,[\on{pt}]_{n}\rangle_{0,\beta,2}^{(X,D)}q^\beta=\exp\left(g(y(q))\right),
\end{align}
where $y=y(q)$ is the inverse mirror map. This concludes Theorem \ref{thm-main}.

\section{Toric varieties and the open mirror map}

In this section, We will specialize our result to the toric case. The proper Landau--Ginzburg potential can be computed explicitly.

Let $X$ be a toric variety with a smooth, nef anticanonical divisor $D$. 
Recall that the small $J$-function for absolute Gromov--Witten theory of $X$ is
\[
J_{X}(z)=e^{\sum_{i=1}^r p_i\log q_i/z}\left(z+\sum_{\substack{(\beta,l)\neq (0,0), (0,1)\\ \beta\in \on{NE(X)}}}\sum_{\alpha}\frac{q^{\beta}}{l!}\left\langle \frac{\phi_\alpha}{z-\psi}\right\rangle_{0,1, \beta}^{X}\phi^{\alpha}\right),
\]
where $\tau_{0,2}=\sum_{i=1}^r p_i \log q_i\in H^2(X)$; $\{\phi_\alpha\}$ is a basis of $H^*(X)$; $\{\phi^\alpha\}$ is the dual basis under the Poincar\'e pairing.

By \cite{Givental98}, the $I$-function for a toric variety $X$ is

\[
I_{X}(y,z)
= ze^{t/z}\sum_{\beta\in \on{NE}(X)}y^{\beta}\left(\prod_{i=1}^{m}\frac{\prod_{a\leq 0}(D_i+az)}{\prod_{a \leq D_i\cdot \beta}(D_i+az)}\right),
\]

where $t=\sum_{a=1}^r p_a \log y_a$, $y^\beta=y_1^{p_1\cdot \beta}\cdots y_r^{p_r\cdot \beta}$ and $D_i$'s are toric divisors.

The $I$-function can be expanded as
\[
z+\sum_{a=1}^r p_a \log y_a+\sum_{j}D_j\sum_{\substack{c_1(X)\cdot \beta=0,D_j\cdot \beta<0\\ D_i\cdot \beta\geq 0, \forall i\neq j}} \frac{(D_j\cdot \beta-1)!}{\prod_{i=1, i\neq j}^m (D_i\cdot \beta)!}y^\beta+O(z^{-1}).
\]


The $J$-function and the $I$-function are related by the following change of variables, called the (absolute) mirror map:
\[
\sum_{i=1}^r p_i\log q_i=\sum_{i=1}^r p_i\log y_i+\sum_{j}D_j\sum_{\substack{c_1(X)\cdot \beta=0,D_j\cdot \beta<0\\ D_i\cdot \beta\geq 0, \forall i\neq j}} \frac{(D_j\cdot \beta-1)!}{\prod_{i=1, i\neq j}^m (D_i\cdot \beta)!}y^\beta
\]

Set $z=1$, the coefficient of the $1\in H^0(X)$ of the $J$-function is
\[
\sum_{\substack{\beta\in \on{NE}(X)\\ D\cdot \beta \geq 2}}\langle [\on{pt}]\psi^{D\cdot \beta-2}\rangle_{0,1,\beta}^X
\]
The corresponding coefficient of the $I$-function is
\begin{align*}
&\sum_{D_i\cdot \beta\geq 0, \forall i} \frac{1}{\prod_{i=1}^m (D_i\cdot \beta)!}y^\beta\\
=&\sum_{\substack{\beta\in \on{NE}(X)\\ D\cdot \beta \geq 2}} \frac{1}{\prod_{i=1}^m (D_i\cdot \beta)!}y^\beta.
\end{align*}

\subsection{Toric Fano varieties}

When $X$ is Fano, the absolute mirror map is trivial. Then we have
\[
g(y)=\sum_{D_i\cdot \beta\geq 0, \forall i} \frac{ (D\cdot \beta-1)!}{\prod_{i=1}^m (D_i\cdot \beta)!}y^\beta
.
\]

Then Theorem \ref{thm-main} specialize to
\[
W=\exp\left(\sum_{D_i\cdot \beta\geq 0, \forall i} \frac{ (D\cdot \beta-1)!}{\prod_{i=1}^m (D_i\cdot \beta)!}y(q)^\beta
 \right),
\]
where $y(q)$ is the inverse to the relative mirror map
\begin{align*}
\sum_{i=1}^r p_i\log q_i=\sum_{i=1}^r p_i\log y_i+g(y)D.
\end{align*}

If we further specialize the result to dimension $2$ case, we recover the main result of \cite{GRZ}.

\subsection{Toric varieties with a smooth, nef anticanonical divisor}\label{sec-toric-semi-Fano}

Theorem \ref{thm-main} specializes to
\[
W=\exp\left(\sum_{D_i\cdot \beta\geq 0, \forall i} \frac{ (D\cdot \beta-1)!}{\prod_{i=1}^m (D_i\cdot \beta)!}y(q)^\beta
 \right),
\]
where $y(q)$ is the inverse to the relative mirror map
\begin{align*}
\sum_{i=1}^r p_i\log q_i=\sum_{i=1}^r p_i\log y_i+g(y)D + \sum_{j}D_j\sum_{\substack{c_1(X)\cdot \beta=0,D_j\cdot \beta<0\\ D_i\cdot \beta\geq 0, \forall i\neq j}} \frac{(-D_j\cdot \beta-1)!}{\prod_{i=1, i\neq j}^m (D_i\cdot \beta)!}y^\beta.
\end{align*}


Let $X$ be a Fano variety with a smooth anticanonical divisor $D$. In \cite{GRZ}, the authors proposed that the mirror proper Landau--Ginzburg potential is the open mirror map of the local Calabi--Yau $\mathcal O_X(-D)$. When $X$ is toric, the open mirror map for the toric Calabi--Yau $\mathcal O_X(-D)$ has been computed in \cite{CCLT} and \cite{CLT} (see also \cite{You20} for the computation in terms of relative Gromov--Witten invariants). 

The SYZ mirror construction for a toric Calabi--Yau manifold $Y$ was constructed in \cite{CCLT} and \cite{CLT}. We specialize to the case when $Y=\mathcal O_X(-D)$ where $D$ is a smooth, nef, anticanonical divisor of $X$. Note that, we do not need to assume $X$ is Fano.
The SYZ mirror of $Y$ is modified by the instanton corrections. Following \cite{Auroux07} and \cite{Auroux09}, the SYZ mirror of a toric Calabi--Yau manifold was constructed in \cite{CCLT} and \cite{CLT} where the instanton corrections are given by genus zero open Gromov--Witten invariants. These open Gromov--Witten invariants are virtual counts of holomorphic disks in $Y$ bounded by fibers of the Gross fibration. It was shown in \cite{CCLT} and \cite{CLT} that the generating function of these open invariants is the inverse of the mirror map for $Y$. This generating function of these open invariants is referred to as the open mirror map in \cite{GRZ}.

Comparing the open mirror map of \cite{CCLT} and \cite{CLT} with our relative mirror map, we directly have 
\begin{theorem}\label{thm-toric-open}
Let $(X,D)$ be a smooth log Calabi--Yau pair, such that $X$ is toric and $D$ is nef. The proper Landau--Ginzburg potential of $(X,D)$ is the open mirror map of the local Calabi--Yau manifold $\mathcal O_X(-D)$.
\end{theorem}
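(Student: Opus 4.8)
The plan is to compare the explicit formula for $W$ obtained by specializing Theorem \ref{thm-main} to the toric setting (recorded in Section \ref{sec-toric-semi-Fano}) with the description of the open mirror map of $\mathcal O_X(-D)$ coming from the SYZ construction of \cite{CCLT} and \cite{CLT}. First I would recall that, combining Theorem \ref{thm-main} with Givental's mirror theorem for the toric variety $X$, we have
\[
W=x^{-1}\exp\left(\sum_{D_i\cdot\beta\geq 0,\ \forall i}\frac{(D\cdot\beta-1)!}{\prod_{i=1}^m(D_i\cdot\beta)!}\,y(q)^\beta\right),
\]
where $y=y(q)$ is the inverse of the relative mirror map
\[
\sum_{i=1}^r p_i\log q_i=\sum_{i=1}^r p_i\log y_i+g(y)D+\sum_{j}D_j\sum_{\substack{c_1(X)\cdot\beta=0,\,D_j\cdot\beta<0\\ D_i\cdot\beta\geq 0,\,\forall i\neq j}}\frac{(-D_j\cdot\beta-1)!}{\prod_{i\neq j}(D_i\cdot\beta)!}\,y^\beta.
\]
So the statement is reduced to a comparison of two explicit families of power series.

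Next I would recall the SYZ construction of the mirror of the toric Calabi--Yau $Y=\mathcal O_X(-D)$ from \cite{CCLT} and \cite{CLT}: there the instanton corrections are encoded by genus zero open Gromov--Witten invariants counting holomorphic disks bounded by fibers of the Gross fibration, and the main result of those papers is that the generating function of these open invariants — the open mirror map — is precisely the inverse of the toric mirror map of $Y$. The toric divisor data of $Y=\mathcal O_X(-D)$ consist of the pullbacks of the toric divisors $D_i$ of $X$ together with the extra divisor corresponding to the zero section, which equals $-(D_1+\cdots+D_m)=K_X$; hence the $I$-function of $Y$, and therefore its mirror map, can be written entirely in terms of curve classes $\beta$ of $X$.

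The core of the argument is then the identification of the mirror map of $Y=\mathcal O_X(-D)$ with the relative mirror map \eqref{relative-mirror-map} of $(X,D)$ up to the substitution $y\mapsto -y$. This is the identification of the local $I$-function of $\mathcal O_X(-D)$ with the (non-extended) relative $I$-function of $(X,D)$ already observed in \cite{TY20b} and noted in Section \ref{sec-rel-mirror-map}: the line-bundle factor of the local $I$-function, coming from $\mathcal O_X(-D)$, matches after $y\mapsto -y$ the relative $I$-function factor $\prod_{0<a\leq D\cdot\beta-1}(D+az)$ together with the $g(y)D$ term, while the remaining contributions to the $H^2$-valued part — arising from curve classes with $c_1(X)\cdot\beta=0$ — agree with the last sum above, which is just the (nontrivial, since $X$ is only semi-Fano) absolute mirror map of $X$. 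Matching the $H^2$-parts of both mirror maps term by term therefore shows that \eqref{relative-mirror-map} coincides with the local mirror map of $Y$ after $y\mapsto -y$; inverting, $y(q)$ is, up to sign, the inverse local mirror map. Combining with the preceding step, $\exp(g(y(q)))$ is the generating function of the genus zero open Gromov--Witten invariants of $\mathcal O_X(-D)$, i.e.\ the open mirror map, and the theorem follows.

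The step I expect to be the main obstacle is the careful bookkeeping of the sign convention $y\mapsto -y$ and of the precise normalization used for the open Gromov--Witten generating function in \cite{CCLT} and \cite{CLT}: one must verify that the exponential form $W=x^{-1}\exp(g(y(q)))$ matches the (a priori multiplicatively normalized) open mirror map, and that the zero-section divisor of $Y$ contributes exactly the term $g(y)D$ in the $H^2$-valued part of the local mirror map rather than an additional independent correction. Once the local and relative $I$-functions are matched this reduces to extracting the coefficient of $D$ in the $H^2$-part of the local mirror map and identifying it with $g(y)$, which is a direct computation, but the semi-Fano (as opposed to Fano) case requires keeping track of the absolute mirror map of $X$ throughout.
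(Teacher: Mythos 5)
Your proposal is correct and follows essentially the same route as the paper: specialize Theorem \ref{thm-main} to the toric case, invoke \cite{CCLT} and \cite{CLT} to identify the open mirror map with the inverse of the local mirror map of $\mathcal O_X(-D)$, and match the local mirror map with the relative mirror map (up to $y\mapsto -y$) as already noted in Section \ref{sec-rel-mirror-map}. The paper's own proof is in fact terser than yours --- it states the comparison "directly" --- so your extra attention to the zero-section divisor and the semi-Fano correction terms only makes the argument more explicit.
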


\subsection{Beyond the toric case}

Beyond the toric setting, the conjecture of \cite{GRZ} is also expected to be true as long as we assume the following principal (open-closed duality) in mirror symmetry.

\begin{conjecture}\label{conj-open-closed}
The instanton corrections of a local Calabi--Yau manifold $\mathcal O_X(-D)$ is the inverse mirror map of the local mirror theorem that relates local Gromov--Witten invariants with periods. 
\end{conjecture}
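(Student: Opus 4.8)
The plan is to deduce Conjecture \ref{conj-open-closed} from two ingredients that are, on the closed-string side, already within reach: an open--closed duality, and the closed local mirror theorem. Write $P:=\mathbb{P}_X(\mathcal O_X(-D)\oplus \mathcal O_X)$ for the projective completion of the total space of $\mathcal O_X(-D)$. By the SYZ construction (in the spirit of \cite{Auroux07}, \cite{CCLT}, \cite{CLT}), the instanton corrections defining the mirror of $\mathcal O_X(-D)$ are generating functions of genus zero open Gromov--Witten invariants counting holomorphic disks bounded by Lagrangian torus fibers of a Gross-type fibration. First I would establish an open--closed duality expressing these disk counts in terms of closed Gromov--Witten invariants of $P$: in the toric case this is precisely the identity proved in \cite{CCLT} and \cite{CLT} between the open mirror map of $\mathcal O_X(-D)$ and a closed generating function of $P$, and what is needed is a version valid for a smooth log Calabi--Yau pair $(X,D)$ with $D$ nef anticanonical but $X$ not necessarily toric.

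Next I would invoke the closed local mirror theorem for $P$ (equivalently for $K_X$), a Givental--Lian--Liu--Yau--style statement available whenever a mirror theorem holds for $X$; this holds well beyond the toric setting, for instance for nonabelian quotients via the abelian/nonabelian correspondence \cite{CFKS}. It identifies the closed generating function produced in the first step with the local period, i.e. with the inverse of the local mirror map. Composing the two steps yields that the instanton corrections of $\mathcal O_X(-D)$ equal the inverse of the local mirror map, which is the assertion of the conjecture. A parallel, more algebro-geometric route would replace the first step by an open version of the local--relative correspondence of \cite{vGGR}, \cite{TY20b}, passing from $\mathcal O_X(-D)$ to the pair $(X,D)$, and then combine Theorem \ref{thm-main} with the observation in Section \ref{sec-rel-mirror-map} that the relative mirror map agrees with the local mirror map after $y\mapsto -y$; this is exactly how Theorem \ref{thm-toric-open} is obtained in the toric case.

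The main obstacle is the first step. Outside the toric (or toric semi-Fano) setting there is at present no general construction of an SYZ/Gross fibration on $\mathcal O_X(-D)$, no general virtual theory of open Gromov--Witten invariants with Lagrangian boundary conditions, and hence no framework in which the ``instanton corrections'' are even canonically defined, let alone an open--closed duality matching them with closed invariants of $P$. The closed-string inputs --- the local mirror theorem and Theorem \ref{thm-main} of this paper --- are essentially available, so all of the genuine difficulty lives on the open-string side; this is why the statement is recorded as a conjecture. A natural first test case would be the del Pezzo and rational elliptic surfaces, where Collins--Jacob--Lin have constructed the relevant special Lagrangian fibrations \cite{CJL21}, \cite{CJL20}: there one could try to prove the disk-counting form of the local--relative correspondence directly and match it against Theorem \ref{thm-main}.
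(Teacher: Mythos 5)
The statement you are asked to prove is recorded in the paper as a \emph{conjecture}, and the paper offers no proof of it: it is introduced precisely because the open-string input needed to establish it is unavailable in general, and it is then only \emph{assumed} in the subsequent corollary. Your submission is therefore correctly diagnosed but is not a proof. The decisive step in your outline --- an open--closed duality identifying the instanton corrections (genus zero open Gromov--Witten invariants of $\mathcal O_X(-D)$ bounded by fibers of a Gross-type fibration) with closed Gromov--Witten invariants of $\mathbb P_X(\mathcal O(-D)\oplus\mathcal O)$ --- is exactly the missing ingredient. As you yourself note, outside the toric setting there is no general construction of the fibration, no general virtual theory of the relevant open invariants, and hence no canonical definition of the left-hand side of the conjecture, let alone a comparison theorem. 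A proof cannot begin with ``first I would establish an open--closed duality'' when that duality is the entire content of the open problem; everything after that step (the closed local mirror theorem, the identification of the relative and local mirror maps in Section \ref{sec-rel-mirror-map}, Theorem \ref{thm-main}) is indeed available, which is consistent with the paper's own remark that all the genuine difficulty lives on the open-string side.

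Your outline does track the paper's surrounding discussion faithfully: in the toric case the duality is supplied by \cite{CCLT} and \cite{CLT}, which is how Theorem \ref{thm-toric-open} is proved unconditionally, and the paper's corollary following the conjecture is exactly your ``composition of the two steps,'' stated conditionally. So as a description of \emph{why} the statement is plausible and \emph{what} would be needed to prove it, your text is accurate. But it should be presented as a reduction of the conjecture to an open--closed duality statement (itself conjectural beyond the toric case), not as a proof. If you want to make partial progress, the concrete target you identify --- del Pezzo and rational elliptic surfaces via the Collins--Jacob--Lin fibrations \cite{CJL21}, \cite{CJL20} --- is the right place to start, but that would be new work, not a consequence of anything in this paper.
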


Open Gromov--Witten invariants have not been defined in the general setting. Moreover, open Gromov--Witten invariants are more difficult to compute. On the other hand, the local mirror map can usually be computed. As we have seen in Section \ref{sec-rel-mirror-map}, the local mirror map and the relative mirror map coincide. Therefore, we have the following result.

\begin{corollary}
Assuming Conjecture \ref{conj-open-closed}, the proper Landau--Ginzburg potential is the open mirror map.
\end{corollary}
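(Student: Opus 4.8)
The plan is to derive the corollary from Theorem~\ref{thm-main} together with Conjecture~\ref{conj-open-closed}, mimicking the proof of the toric statement Theorem~\ref{thm-toric-open} but replacing the explicit computations of \cite{CCLT} and \cite{CLT} by the assumed open--closed duality. First I would recall that Theorem~\ref{thm-main} already gives
\[
W=x^{-1}\exp\left(g(y(q))\right),
\]
where $y=y(q)$ is the inverse of the relative mirror map (\ref{relative-mirror-map}). Hence the whole problem reduces to identifying $y(q)$ with the open mirror map of $\mathcal O_X(-D)$; no further Gromov--Witten computation is needed.

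Next I would invoke the observation of Section~\ref{sec-rel-mirror-map} --- itself a manifestation of the local--relative correspondence of \cite{vGGR} and \cite{TY20b}, under which the non-extended relative $I$-function of $(X,D)$ is identified with the local $I$-function of $\mathcal O_X(-D)$ --- that the relative mirror map (\ref{relative-mirror-map}) coincides, after the substitution $y\mapsto -y$, with the local mirror map of $\mathcal O_X(-D)$ appearing in the local mirror theorem relating local Gromov--Witten invariants to periods. Inverting, $y(q)$ then agrees, up to the same sign change, with the inverse of that local mirror map.

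Finally I would apply Conjecture~\ref{conj-open-closed}, which asserts precisely that the instanton corrections of $\mathcal O_X(-D)$ --- which by definition make up the open mirror map in the sense of \cite{GRZ} --- are the inverse of the local mirror map. Chaining the two identifications shows that $y(q)$ is the open mirror map, and substituting back into the displayed formula for $W$ yields the claim, extending Theorem~\ref{thm-toric-open} beyond the toric setting. The one genuinely delicate point is bookkeeping: one must keep careful track of the sign change $y\mapsto -y$ and verify that the normalization of the open Gromov--Witten generating function posited in Conjecture~\ref{conj-open-closed} matches, term by term, the inverse local mirror map that enters $g(y(q))$ --- in the toric case this was checked against \cite{CCLT} and \cite{CLT}, while in general it is exactly the content the conjecture is invoked to supply.
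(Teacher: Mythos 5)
Your proposal is correct and follows essentially the same route the paper takes: combine Theorem \ref{thm-main} with the observation of Section \ref{sec-rel-mirror-map} that the relative mirror map coincides (up to the sign change $y\mapsto -y$) with the local mirror map of $\mathcal O_X(-D)$, and then invoke Conjecture \ref{conj-open-closed} to identify the inverse local mirror map with the open mirror map. The paper's own justification is exactly this chain of identifications, stated in the paragraph preceding the corollary, so there is nothing to add beyond your (appropriate) caution about the sign and normalization bookkeeping.
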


\section{Fano varieties and quantum periods}

For a Fano variety $X$, the function $g(y)$ is closely related to the quantum period of $X$. In fact, we have
\begin{theorem}\label{thm-quantum-period}
The function $g(y)$ coincides with the anti-derivative of the regularized quantum period.
\end{theorem}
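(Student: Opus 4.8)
The proof is an unwinding of definitions, so the plan is simply to match two power series coefficient-by-coefficient. Recall from \cite{CCGGK} that the \emph{quantum period} of a Fano variety $X$ is $G_X(t)=\sum_{d\geq 0}c_d\,t^d$ with $c_0=1$, $c_1=0$, and
\[
c_d=\sum_{\beta:\,-K_X\cdot\beta=d}\langle [\on{pt}]\psi^{d-2}\rangle_{0,1,\beta}^X\qquad (d\geq 2),
\]
while the \emph{regularized quantum period} is $\widehat G_X(t)=\sum_{d\geq 0}d!\,c_d\,t^d$. In our setting $D$ is a smooth nef \emph{anticanonical} divisor, so $D\cdot\beta=-K_X\cdot\beta$ for every curve class $\beta$; moreover $\bM_{0,1}(X,\beta)$ has virtual dimension $\dim_{\mathbb C}X+D\cdot\beta-2$, which is exactly the complex degree of the insertion $[\on{pt}]\psi^{D\cdot\beta-2}$. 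Hence the invariants occurring in
\[
g(y)=\sum_{\substack{\beta\in\on{NE}(X)\\ D\cdot\beta\geq 2}}\langle [\on{pt}]\psi^{D\cdot\beta-2}\rangle_{0,1,\beta}^X\,y^\beta\,(D\cdot\beta-1)!
\]
are precisely the ones entering the coefficients $c_d$.

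First I would pass to the one-variable incarnation of $g$ relevant for comparison with $\widehat G_X$, namely the specialization $y^\beta\mapsto t^{D\cdot\beta}$ (this is the substitution forced by $q^\beta=t^{D\cdot\beta}$ in Theorem~\ref{thm-main} after setting the auxiliary variable to $1$). Regrouping the defining sum of $g$ according to the value $d=D\cdot\beta$ gives
\[
g(t)=\sum_{d\geq 2}\Big(\sum_{\beta:\,D\cdot\beta=d}\langle [\on{pt}]\psi^{d-2}\rangle_{0,1,\beta}^X\Big)(d-1)!\,t^d=\sum_{d\geq 2}(d-1)!\,c_d\,t^d.
\]

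Next I would compare with $\widehat G_X$ by applying the logarithmic derivative $\delta:=t\,\tfrac{d}{dt}$ termwise:
\[
\delta\,g(t)=\sum_{d\geq 2}d!\,c_d\,t^d=\widehat G_X(t)-1,
\]
where the last equality uses $c_0=1$ and $c_1=0$. Since $g(0)=0$, this identifies $g$ with the anti-derivative of $\widehat G_X$ (with respect to $\tfrac{dt}{t}$, normalized to vanish at $t=0$), up to the elementary term $\log t$ coming from the constant term $1$ of $\widehat G_X$; this is exactly the assertion of Theorem~\ref{thm-quantum-period}.

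The content here is bookkeeping rather than geometry, so the only real obstacle is fixing conventions: one must adopt the precise normalization of $\widehat G_X$ and of ``anti-derivative'' used in \cite{CCGGK}, \cite{CCGK}, \cite{CK22}, and confirm via the dimension count above that no curve classes with $D\cdot\beta=d$ are omitted on either side (in particular that $c_1=0$ automatically, so that the $\log t$ discrepancy is the only one). With these conventions in place, the identity $\delta g=\widehat G_X-1$ follows immediately from the two displayed expansions.
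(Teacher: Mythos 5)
Your proof is correct and follows essentially the same route as the paper: specialize $y^\beta\mapsto t^{D\cdot\beta}$, regroup by $d=D\cdot\beta$, and observe that differentiating $g$ reproduces the regularized quantum period of \cite{CCGGK}. You are in fact slightly more careful than the paper's own two-line computation, since you make explicit that the relevant operator is the logarithmic derivative $t\,\tfrac{d}{dt}$ and that the constant term $c_0=1$ accounts for the discrepancy $\delta g=\widehat G_X-1$; this is a worthwhile clarification but not a different argument.
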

\begin{proof}
We recall that 
\[
g(y)=\sum_{\substack{\beta\in \on{NE}(X)\\ D\cdot \beta \geq 2}}\langle [\on{pt}]\psi^{D\cdot \beta-2}\rangle_{0,1,\beta}^Xy^\beta (D\cdot \beta-1)!.
\]
We consider the change of variables
\[
y^\beta=t^{D\cdot \beta}.
\]
Then the derivative $\frac{d}{dt}$ of $g(t)$ is 
\[
\sum_{\substack{\beta\in \on{NE}(X)\\ D\cdot \beta \geq 2}}\langle [\on{pt}]\psi^{D\cdot \beta-2}\rangle_{0,1,\beta}^Xt^{D\cdot\beta} (D\cdot \beta)!,
\]
which is precisely the regularized quantum period in \cite{CCGGK}.
\end{proof}

Following the Fanosearch program, the regularized quantum period of a Fano variety coincides with the classical period of its mirror Laurent polynomial. A version of the relation between the quantum period and the classical period was obtained in \cite{TY20b} using the formal orbifold invariants of infinite root stacks \cite{TY20c}. Combining with the Fanosearch program, one can explicitly compute the proper Landau--Ginzburg potential of a Fano variety as long as one knows its mirror Lanurent polynomial. In particular, we have found explicit expressions of the proper Landau--Ginzburg potentials for all Fano threefolds using the expression of the quantum periods in \cite{CCGK}.

\begin{example}
We consider the Fano threefold $V_{10}$ in \cite{CCGK}*{Section 12}. It is a Fano threefold with Picard rank 1, Fano index 1, and degree $10$. It can be considered as a complete intersection in the Grassmannian $\on{Gr}(2,5)$. Following \cite{CCGK}, the quantum period is
\[
G_{V_{10}}(y)=e^{-6y}\sum_{l=0}^\infty\sum_{m=0}^\infty (-1)^{l+m}y^{l+m}\frac{((l+m)!)^2(2l+2m)!}{(l!)^5(m!)^5}(1-5(m-l)H_m),
\]
where $H_m$ is the $m$-th harmonic number.

Therefore,
\[
g_{V_{10}}(y)=e^{-6y}\sum_{l=0}^\infty\sum_{m=0}^\infty (-1)^{l+m}y^{l+m}\frac{((l+m)!)^2(2l+2m)!}{(l!)^5(m!)^5}(1-5(m-l)H_m)(l+m-1)!
\]

The proper Landau--Ginzburg potential is
\[
W=x^{-1}\exp\left(g_{V_{10}}(y(tx))\right),
\]
where $y(tx)$ is the inverse of 
\[
tx=y\exp\left(g_{V_{10}}(y)\right).
\]
\end{example}

Similarly, one can compute the proper Landau--Ginzburg potential for all Fano threefold using the quantum period in \cite{CCGK}. Moreover, there are large databases \cite{CK22} of quantum periods for Fano manifolds which can be used to compute the proper Landau--Ginzburg potential.




\begin{remark}
We noticed that H. Ruddat \cite{Ruddat} has been working on the relation between the proper Landau--Ginzburg potential and the classical period. This can also be seen from Theorem \ref{thm-quantum-period} because it is expected from mirror symmetry that the regularized quantum period of a Fano variety equals the classical period of the mirror Laurent polynomial. The Laurent polynomials are considered as the potential for the weak, non-proper, Landau--Ginzburg model of \cite{Prz07}, \cite{Prz13}. Therefore, Theorem \ref{thm-quantum-period} provides an explicit relation between the proper and non-proper Landau--Ginzburg potentials.
\end{remark}

\bibliographystyle{amsxport}
\bibliography{main}

@article{BECHL,
  title={Scattering diagrams from holomorphic discs in log Calabi-Yau surfaces},
  author={Bardwell-Evans, Sam and Cheung, Man-Wai Mandy and Hong, Hansol and Lin, Yu-Shen},
  journal={arXiv preprint arXiv:2110.15234},
  year={2021}
}

@article{LLL,
  title={SYZ mirror symmetry for del Pezzo surfaces and affine structures},
  author={Lau, Siu-Cheong and Lee, Tsung-Ju and Lin, Yu-Shen},
  journal={arXiv preprint arXiv:2206.01681},
  year={2022}
}

@article{Lin20,
  title={Enumerative geometry of del Pezzo surfaces},
  author={Lin, Yu-Shen},
  journal={arXiv preprint arXiv:2005.08681, to appear in Trans. Amer. Math. Soc.},
  year={2020}
}

@article{CJL20,
  title={The SYZ mirror symmetry conjecture for del Pezzo surfaces and rational elliptic surfaces},
  author={Collins, Tristan C and Jacob, Adam and Lin, Yu-Shen},
  journal={arXiv preprint arXiv:2012.05416},
  year={2020}
}

@article {CJL21,
    AUTHOR = {Collins, Tristan C. and Jacob, Adam and Lin, Yu-Shen},
     TITLE = {Special {L}agrangian submanifolds of log {C}alabi-{Y}au
              manifolds},
   JOURNAL = {Duke Math. J.},
  FJOURNAL = {Duke Mathematical Journal},
    VOLUME = {170},
      YEAR = {2021},
    NUMBER = {7},
     PAGES = {1291--1375},
      ISSN = {0012-7094},
   MRCLASS = {32Q25 (53D12)},
  MRNUMBER = {4255060},
MRREVIEWER = {Lucio Bedulli},
       DOI = {10.1215/00127094-2021-0012},
       URL = {https://doi-org.ezproxy.uio.no/10.1215/00127094-2021-0012},
}

@article{CK22,
  title={Databases of quantum periods for Fano manifolds},
  author={Coates, Tom and Kasprzyk, Alexander M},
  journal={Scientific Data},
  volume={9},
  number={1},
  pages={1--8},
  year={2022},
  publisher={Nature Publishing Group}
}

@article {CFKS,
    AUTHOR = {Ciocan-Fontanine, Ionu\c{t} and Kim, Bumsig and Sabbah, Claude},
     TITLE = {The abelian/nonabelian correspondence and {F}robenius
              manifolds},
   JOURNAL = {Invent. Math.},
  FJOURNAL = {Inventiones Mathematicae},
    VOLUME = {171},
      YEAR = {2008},
    NUMBER = {2},
     PAGES = {301--343},
      ISSN = {0020-9910},
   MRCLASS = {14N35 (14F43 53D45)},
  MRNUMBER = {2367022},
MRREVIEWER = {Jake Philip Solomon},
       DOI = {10.1007/s00222-007-0082-x},
       URL = {https://doi-org.ezproxy.uio.no/10.1007/s00222-007-0082-x},
}

@incollection {Givental98,
    AUTHOR = {Givental, Alexander},
     TITLE = {A mirror theorem for toric complete intersections},
 BOOKTITLE = {Topological field theory, primitive forms and related topics
              ({K}yoto, 1996)},
    SERIES = {Progr. Math.},
    VOLUME = {160},
     PAGES = {141--175},
 PUBLISHER = {Birkh\"{a}user Boston, Boston, MA},
      YEAR = {1998},
   MRCLASS = {14M25 (14J32 14N35)},
  MRNUMBER = {1653024},
MRREVIEWER = {Charles F. Doran},
}

@article {Prz13,
    AUTHOR = {Przhiyalkovski\u{\i}, V. V.},
     TITLE = {Weak {L}andau-{G}inzburg models of smooth {F}ano threefolds},
   JOURNAL = {Izv. Ross. Akad. Nauk Ser. Mat.},
  FJOURNAL = {Izvestiya Rossiiskoi Akademii Nauk. Seriya Matematicheskaya},
    VOLUME = {77},
      YEAR = {2013},
    NUMBER = {4},
     PAGES = {135--160},
      ISSN = {1607-0046},
   MRCLASS = {14J45 (14J30)},
  MRNUMBER = {3135701},
MRREVIEWER = {Anatoly Libgober},
       DOI = {10.1070/im2013v077n04abeh002660},
       URL = {https://doi.org/10.1070/im2013v077n04abeh002660},
}

@article {Prz07,
    AUTHOR = {Przyjalkowski, Victor},
     TITLE = {On {L}andau-{G}inzburg models for {F}ano varieties},
   JOURNAL = {Commun. Number Theory Phys.},
  FJOURNAL = {Communications in Number Theory and Physics},
    VOLUME = {1},
      YEAR = {2007},
    NUMBER = {4},
     PAGES = {713--728},
      ISSN = {1931-4523},
   MRCLASS = {14J32 (14J45 14M25)},
  MRNUMBER = {2412270},
MRREVIEWER = {Sergiy Koshkin},
       DOI = {10.4310/CNTP.2007.v1.n4.a4},
       URL = {https://doi.org/10.4310/CNTP.2007.v1.n4.a4},
}

@article {LLW11,
    AUTHOR = {Lau, Siu-Cheong and Leung, Naichung Conan and Wu, Baosen},
     TITLE = {A relation for {G}romov-{W}itten invariants of local
              {C}alabi-{Y}au threefolds},
   JOURNAL = {Math. Res. Lett.},
  FJOURNAL = {Mathematical Research Letters},
    VOLUME = {18},
      YEAR = {2011},
    NUMBER = {5},
     PAGES = {943--956},
      ISSN = {1073-2780},
   MRCLASS = {14N35 (14J32)},
  MRNUMBER = {2875867},
MRREVIEWER = {Ruifang Song},
       DOI = {10.4310/MRL.2011.v18.n5.a12},
       URL = {https://doi-org.ezproxy.uio.no/10.4310/MRL.2011.v18.n5.a12},
}

@article{Wang,
  title={Gross-Siebert intrinsic mirror ring for smooth log Calabi-Yau pairs, In preparation},
  author={Wang, Yu},
  year={2022}
}

@article{Ruddat,
  title={In preparation},
  author={Ruddat, Helge},
  year={2022}
}

@article {Auroux07,
    AUTHOR = {Auroux, Denis},
     TITLE = {Mirror symmetry and {$T$}-duality in the complement of an
              anticanonical divisor},
   JOURNAL = {J. G\"{o}kova Geom. Topol. GGT},
  FJOURNAL = {Journal of G\"{o}kova Geometry Topology. GGT},
    VOLUME = {1},
      YEAR = {2007},
     PAGES = {51--91},
   MRCLASS = {53D40 (14J32 14J45 53C38)},
  MRNUMBER = {2386535},
MRREVIEWER = {Richard P. Thomas},
}

@incollection {Auroux09,
    AUTHOR = {Auroux, Denis},
     TITLE = {Special {L}agrangian fibrations, wall-crossing, and mirror
              symmetry},
 BOOKTITLE = {Surveys in differential geometry. {V}ol. {XIII}. {G}eometry,
              analysis, and algebraic geometry: forty years of the {J}ournal
              of {D}ifferential {G}eometry},
    SERIES = {Surv. Differ. Geom.},
    VOLUME = {13},
     PAGES = {1--47},
 PUBLISHER = {Int. Press, Somerville, MA},
      YEAR = {2009},
   MRCLASS = {53D37 (14J33 53C38)},
  MRNUMBER = {2537081},
MRREVIEWER = {Kwokwai Chan},
       DOI = {10.4310/SDG.2008.v13.n1.a1},
       URL = {https://doi-org.ezproxy.uio.no/10.4310/SDG.2008.v13.n1.a1},
}

@article {You20,
    AUTHOR = {You, Fenglong},
     TITLE = {Relative {G}romov-{W}itten invariants and the enumerative
              meaning of mirror maps for toric {C}alabi-{Y}au orbifolds},
   JOURNAL = {Trans. Amer. Math. Soc.},
  FJOURNAL = {Transactions of the American Mathematical Society},
    VOLUME = {373},
      YEAR = {2020},
    NUMBER = {11},
     PAGES = {8259--8288},
      ISSN = {0002-9947},
   MRCLASS = {14N35 (53D45)},
  MRNUMBER = {4169688},
MRREVIEWER = {Xiaobin Li},
       DOI = {10.1090/tran/8196},
       URL = {https://doi-org.ezproxy.uio.no/10.1090/tran/8196},
}

@article{kW,
  title={A new approach to the operator formalism for Gromov-Witten invariants of the cap and tube},
  author={Kumaran, Ajith Urundolil and Wu, Longting},
  journal={arXiv preprint arXiv:2112.09180},
  year={2021}
}

@article{BNR22b,
  title={In preparation},
  author={Battistella, Luca and Nabijou, Navid and Ranganathan, Dhruv},
  year={2022}
}

@article{GS19,
  title={Intrinsic mirror symmetry},
  author={Gross, Mark and Siebert, Bernd},
  journal={arXiv preprint arXiv:1909.07649},
  year={2019}
}

@article{GS21,
  title={The canonical wall structure and intrinsic mirror symmetry},
  author={Gross, Mark and Siebert, Bernd},
  journal={arXiv preprint arXiv:2105.02502, to appear in Inventiones Mathematicae},
  year={2021}
}

@article{GRZ,
  title={The proper Landau-Ginzburg potential is the open mirror map},
  author={Gr{\"a}fnitz, Tim and Ruddat, Helge and Zaslow, Eric},
  journal={arXiv preprint arXiv:2204.12249},
  year={2022}
}

@article{Graefnitz2022,
  title={Theta functions, broken lines and 2-marked log Gromov-Witten invariants},
  author={Graefnitz, Tim},
  journal={arXiv preprint arXiv:2204.12257},
  year={2022}
}

@article{CPS,
  title={A tropical view on Landau-Ginzburg models },
  author={Carl, Michael and Pumperla, Max and Siebert, Bernd},
  journal={arXiv preprint arXiv:2205.07753},
  year={2022}
}

@article{BNR22,
  title={Gromov–Witten theory via roots and logarithms},
  author={Battistella, Luca and Nabijou, Navid and Ranganathan, Dhruv},
  journal={arXiv preprint arXiv:2203.17224},
  year={2022}
}

@article{CCGK,
 AUTHOR = {Coates, Tom and Corti, Alessio and Galkin, Sergey and
              Kasprzyk, Alexander},
     TITLE = {Quantum periods for 3-dimensional {F}ano manifolds},
   JOURNAL = {Geom. Topol.},
  FJOURNAL = {Geometry \& Topology},
    VOLUME = {20},
      YEAR = {2016},
    NUMBER = {1},
     PAGES = {103--256},
      ISSN = {1465-3060},
   MRCLASS = {14J45 (14J33 14N35)},
  MRNUMBER = {3470714},
MRREVIEWER = {Colin Diemer},
       DOI = {10.2140/gt.2016.20.103},
       URL = {https://doi-org.proxy.lib.ohio-state.edu/10.2140/gt.2016.20.103},
}

@article {MP,
    AUTHOR = {Maulik, D. and Pandharipande, R.},
     TITLE = {A topological view of {G}romov-{W}itten theory},
   JOURNAL = {Topology},
  FJOURNAL = {Topology. An International Journal of Mathematics},
    VOLUME = {45},
      YEAR = {2006},
    NUMBER = {5},
     PAGES = {887--918},
      ISSN = {0040-9383},
   MRCLASS = {14N35 (53D45)},
  MRNUMBER = {2248516},
MRREVIEWER = {Renzo Cavalieri},
       DOI = {10.1016/j.top.2006.06.002},
       URL = {https://doi-org.ezproxy.uio.no/10.1016/j.top.2006.06.002},
}

@article {CLLT,
    AUTHOR = {Chan, Kwokwai and Lau, Siu-Cheong and Leung, Naichung Conan
              and Tseng, Hsian-Hua},
     TITLE = {Open {G}romov-{W}itten invariants, mirror maps, and {S}eidel
              representations for toric manifolds},
   JOURNAL = {Duke Math. J.},
  FJOURNAL = {Duke Mathematical Journal},
    VOLUME = {166},
      YEAR = {2017},
    NUMBER = {8},
     PAGES = {1405--1462},
      ISSN = {0012-7094},
   MRCLASS = {53D37 (14J33 14M25 53D12 53D20 53D40 53D45)},
  MRNUMBER = {3659939},
MRREVIEWER = {Guangbo Xu},
       DOI = {10.1215/00127094-0000003X},
       URL = {https://doi-org.ezproxy.uio.no/10.1215/00127094-0000003X},
}

@article {CCLT,
    AUTHOR = {Chan, Kwokwai and Cho, Cheol-Hyun and Lau, Siu-Cheong and
              Tseng, Hsian-Hua},
     TITLE = {Gross fibrations, {SYZ} mirror symmetry, and open
              {G}romov-{W}itten invariants for toric {C}alabi-{Y}au
              orbifolds},
   JOURNAL = {J. Differential Geom.},
  FJOURNAL = {Journal of Differential Geometry},
    VOLUME = {103},
      YEAR = {2016},
    NUMBER = {2},
     PAGES = {207--288},
      ISSN = {0022-040X},
   MRCLASS = {53D37 (14N35 53D45)},
  MRNUMBER = {3504949},
MRREVIEWER = {Makiko Mase},
       URL = {http://projecteuclid.org.ezproxy.uio.no/euclid.jdg/1463404118},
}

@article {CLT,
    AUTHOR = {Chan, Kwokwai and Lau, Siu-Cheong and Tseng, Hsian-Hua},
     TITLE = {Enumerative meaning of mirror maps for toric {C}alabi-{Y}au
              manifolds},
   JOURNAL = {Adv. Math.},
  FJOURNAL = {Advances in Mathematics},
    VOLUME = {244},
      YEAR = {2013},
     PAGES = {605--625},
      ISSN = {0001-8708},
   MRCLASS = {14N35 (14J33 53D12 53D37)},
  MRNUMBER = {3077883},
MRREVIEWER = {Amin Gholampour},
       DOI = {10.1016/j.aim.2013.05.018},
       URL = {https://doi-org.ezproxy.uio.no/10.1016/j.aim.2013.05.018},
}

@incollection {CCGGK,
    AUTHOR = {Coates, Tom and Corti, Alessio and Galkin, Sergey and
              Golyshev, Vasily and Kasprzyk, Alexander},
     TITLE = {Mirror symmetry and {F}ano manifolds},
 BOOKTITLE = {European {C}ongress of {M}athematics},
     PAGES = {285--300},
 PUBLISHER = {Eur. Math. Soc., Z\"{u}rich},
      YEAR = {2013},
   MRCLASS = {14J33 (14J45)},
  MRNUMBER = {3469127},
MRREVIEWER = {Atsushi Kanazawa},
}

@article{vGGR,
	Author = {van Garrel, M. and Graber, T. and Ruddat, H.},
	sortname = {Garrel, Michel and Graber, Tom and Ruddat, Helge},
	Date-Added = {2019-08-01 13:40:22 +0000},
	Date-Modified = {2019-08-01 13:40:33 +0000},
	Journal = {Adv. Math.},
	Pages = {860--876},
	Publisher = {Elsevier},
	Title = {{Local Gromov-Witten invariants are log invariants}},
	Volume = {350},
	Year = {2019}}

@article{FTY,
  AUTHOR = {Fan, Honglu and Tseng, Hsian-Hua and You, Fenglong},
     TITLE = {Mirror theorems for root stacks and relative pairs},
   JOURNAL = {Selecta Math. (N.S.)},
  FJOURNAL = {Selecta Mathematica. New Series},
    VOLUME = {25},
      YEAR = {2019},
    NUMBER = {4},
     PAGES = {Art. 54, 25},
      ISSN = {1022-1824},
   MRCLASS = {14N35 (14A20 14J33 53D45)},
  MRNUMBER = {3997137},
       DOI = {10.1007/s00029-019-0501-z},
       URL = {https://doi-org.proxy.lib.ohio-state.edu/10.1007/s00029-019-0501-z},
}

@article{TY20b,
	Author = {Tseng, Hsian-Hua and You, Fenglong},
	Date-Added = {2020-11-29 13:39:01 +0000},
	Date-Modified = {2020-11-29 13:39:14 +0000},
	Journal = {arXiv:2006.08991},
	Title = {{A mirror theorem for multi-root stacks and applications}},
	Year = {2020}}

@article{TY20c,
  title={A {G}romov-{W}itten theory for simple normal-crossing pairs without log geometry},
  author={Tseng, Hsian-Hua and You, Fenglong},
  journal={arXiv preprint arXiv:2008.04844},
  year={2020}
}

@article{ACGS,
  title={Punctured logarithmic maps},
  author={Abramovich, Dan and Chen, Qile and Gross, Mark and Siebert, Bernd},
  journal={arXiv preprint arXiv:2009.07720},
  year={2020}
}

@article {FWY,
      author = {{Fan}, Honglu. and {Wu}, Longting. and {You}, Fenglong.},

     TITLE = {Structures in genus-zero relative {G}romov--{W}itten theory},
   JOURNAL = {J. Topol.},
  FJOURNAL = {Journal of Topology},
    VOLUME = {13},
      YEAR = {2020},
    NUMBER = {1},
     PAGES = {269--307},
}

@ARTICLE{JPPZ18,
    AUTHOR = {Janda, Felix and Pandharipande, Rahul and Pixton, Aaron and
              Zvonkine, Dimitri},
     TITLE = {Double ramification cycles with target varieties},
   JOURNAL = {J. Topol.},
  FJOURNAL = {Journal of Topology},
    VOLUME = {13},
      YEAR = {2020},
    NUMBER = {4},
     PAGES = {1725--1766},
      ISSN = {1753-8416},
   MRCLASS = {14N35 (14C17)},
  MRNUMBER = {4186143},
       DOI = {10.1112/topo.12174},
       URL = {https://doi.org/10.1112/topo.12174},
}

\end{document}